\def\p{\partial}
\newcommand{\R}{{\mathbb R}}
\newcommand{\C}{{\mathbb C}}
\newcommand{\supp}{\operatorname{supp}}
\newcommand{\diam}{\operatorname{diam}}
\newcommand{\dist}{\operatorname{dist}}
\newcommand{\mathbi}[1]{{\boldsymbol{#1}}}
\newcommand{\mm}{{-}}
\newcommand{\pp}{{+}}
\newcommand{\frc}{{\mathrm{f}}}
\newcommand{\jmp}[1]{{\,\big[\,#1\,\big]_\mm^\pp }}
\newcommand{\TT}{\parallel}
\newcommand{\nablaS}{\nabla^{^\Surf}\!}
\newcommand{\Surf}{{\Sigma}}
\newcommand{\fSurf}{\Surf_\mathrm{f}}
\newcommand{\itLa}{\mathit{\Lambda}}
\newcommand{\itLaINI}{\itLa^{\tsT^0}}
\newcommand{\tsC}{\mathbi{C}}
\newcommand{\tsLa}{\mathbi{\mathit{\Lambda}}}
\newcommand{\tsLaINI}{\tsLa^{\!\tsT^0\!}}
\newcommand{\tsT}{\mathbi{T}}
\newcommand{\vef}{\mathbi{f}}
\newcommand{\veu}{\mathbi{u}}
\newcommand{\ven}{\mathbi{n}}
\newcommand{\vetau}{{\mathbi{\tau}}}
\def\cbeta{c_4}
\theoremstyle{definition}
\newtheorem*{construction*}{Construction}
\newtheorem*{notation*}{Notations}
\newtheorem{remark}{Remark}
\theoremstyle{theorem}
\newtheorem{theorem}{Theorem}[section]
\newtheorem{lemma}[theorem]{Lemma}
\newtheorem{proposition}[theorem]{Proposition} 
\newtheorem{corollary}[theorem]{Corollary} 
\newtheorem{assumption}[theorem]{Assumption}
\numberwithin{equation}{section}
\begin{document} 

\title[Quantitative unique continuation and the kinematic inverse rupture problem]{Quantitative unique continuation for the elasticity system with application to the kinematic inverse rupture problem}

\author[M. V. de Hoop]{Maarten V. de Hoop}
\address{Maarten V. de Hoop: Computational and Applied Mathematics and Earth Science, Rice University, Houston, TX 77005, USA}
\email{mdehoop@rice.edu}

\author[M. Lassas]{Matti Lassas}
\address{Matti Lassas: Department of Mathematics and Statistics, University of Helsinki, FI-00014 Helsinki, Finland}
\email{matti.lassas@helsinki.fi}

\author[J. Lu]{Jinpeng Lu}
\address{Jinpeng Lu: Department of Mathematics and Statistics, University of Helsinki, FI-00014 Helsinki, Finland} 
\email{jinpeng.lu@helsinki.fi}

\author[L. Oksanen]{Lauri Oksanen}
\address{Lauri Oksanen: Department of Mathematics and Statistics, University of Helsinki, FI-00014 Helsinki, Finland} 
\email{lauri.oksanen@helsinki.fi}

%\date{\today}

\subjclass[2020]{35L10, 35R30, 35Q86}
\keywords{Quantitative unique continuation, hyperbolic equations, elasticity, kinematic inverse rupture problem.}

\maketitle 

\vspace{-5mm}
\begin{abstract}
We obtain explicit estimates on the stability of the unique continuation for a linear system of hyperbolic equations. In particular our result applies to the elasticity system and also the Maxwell system. 
As an application, we study the kinematic inverse rupture problem of determining the jump in displacement and the friction force at the rupture surface, and we obtain new features on the stable unique continuation up to the rupture surface.
\end{abstract}

\section{Introduction}

%\subsection{Stability of the unique continuation}

The unique continuation property for a differential operator $P$ states the following: given an open set $\Omega\subset \R^{n+1}$ and a small subset $U\subset \Omega$, if $Pu=0$ and $u|_U=0$, then $u=0$ in $\Omega$. 
Holmgren's Theorem states that for operators with analytic coefficients, the local version of the unique continuation property holds across any non-characteristic hypersurface.
For operators with only smooth coefficients, the local unique continuation across a hypersurface holds if the hypersurface satisfies a pseudoconvexity condition \cite{H63}. 
Such pseudoconvexity condition cannot be dropped due to the existence of counterexamples given by \cite{Alinhac}.
For operators with coefficients that are analytic in part of the variables, for instance the wave operator with coefficients analytic in time, Tataru proved in the seminal paper \cite{Tataru1} that the local unique continuation property holds across any non-characteristic hypersurface, which leads to a global unique continuation result in optimal time.
Tataru's unique continuation theorem is crucial for the Boundary Control method in solving inverse problems for linear equations, see e.g. \cite{AKKLT,BK,BKL3,BILL,KKL,KrKL,KOP}.
The unique continuation for linear systems of hyperbolic equations was studied in \cite{EINT}, and the result can be applied to the time-dependent classical elasticity system and the Maxwell system.

We are interested in the stability of the unique continuation: if $Pu$ is small in $\Omega$ and $u$ is small in $U$, then $u$ is small in $\Omega$.
Inspired by Tataru's ideas in \cite{Tataru-unpublished}, the quantitative stability of the unique continuation for the wave operator was obtained by \cite{BKL2,BKL1} and \cite{LL} independently.
An explicit stability of the unique continuation on Riemannian manifolds with boundary was recently obtained by \cite{BILL}, with $U$ being a subset of the boundary. In this paper, we study the explicit stability of the unique continuation for a linear system of hyperbolic equations on Riemannian manifolds with boundary, with $U$ being an interior open subset of the manifold. 
In particular, our result provides an explicit stability of the unique continuation for the classical elasticity system and the Maxwell system.

\subsection{Main results}

We consider a linear system of hyperbolic equations on $\R\times \R^n$ of the type
\begin{equation}\label{eq_system}
P_i u_i+L_i (D u,u)=f_i,\quad  i=1,\cdots,m,
\end{equation}
where $P_i$ is the wave operator with time-independent wave speed $v_i$:
\begin{equation}\label{def-wave}
P_i= \partial_t^2 - v_i(x)^2 \Delta_g, \quad \Delta_g=\sum_{j,k=1}^n g^{jk}(x) D_j D_k+\sum_{j=1}^n h_j(x)D_j+q(x),
\end{equation}
and $L_i$ are linear functions of $D u_k,u_k$ ($k=1,\cdots,m$)
with time-independent $L^{\infty}(\R^n)$ coefficients. We denote by $\|L\|_{\infty}$ the maximum over $i$ of the $L^{\infty}(\R^n)$-norms of the coefficients of $L_i$.
Assume that $v_i\in C^1(\R^n)$, $v_i>0$, $g^{jk}\in C^1(\R^n)$ and $h_j,q\in C^{0}(\R^n)$. We write 
$$u=(u_1,\cdots,u_m), \quad f=(f_1,\cdots,f_m),$$
and denote
$$\|u\|_{L^2(\Omega)}^2=\sum_{i=1}^m \|u_i\|_{L^2(\Omega)}^2, \quad \|u\|_{H^1(\Omega)}^2=\sum_{i=1}^m \|u_i\|_{H^1(\Omega)}^2.$$
In particular, it was shown in \cite{EINT} that the elasticity system and also the Maxwell system can be written in the form of hyperbolic equations \eqref{eq_system}.

\smallskip
On a Riemannian manifold $(M,g)$, we consider wave operators \eqref{def-wave} with coefficients $g^{jk}$ locally given by the Riemannian metric $g$. 
More precisely, the matrix $(g^{jk})$ is the inverse of the Riemannian metric $(g_{jk})$ in local coordinates.
In particular, one can consider $\Delta_g$ to be the Laplace-Beltrami operator on $(M,g)$.
%Lower order terms do not matter.
Our main result is the following explicit stability estimate for the unique continuation for the system \eqref{eq_system}.

\begin{theorem}\label{uc-manifold}
Let $(M^n,g)$ be a compact, orientable, smooth Riemannian manifold of dimension $n\geq 2$ with smooth boundary $\partial M$, and $U$ be a connected open subset of $M$ with smooth boundary $\partial U$. Assume $\overline{U}\cap \partial M=\emptyset$.
Suppose $u=(u_1,\cdots,u_m)$, $u_i \in H^1(M\times [-T,T])$ is a solution of the system of hyperbolic equations \eqref{eq_system} with $f=(f_1,\cdots,f_m),\,f_i \in L^2(M\times [-T,T])$. Let $v=\min\limits_i \inf\limits_{x\in M} v_i(x)>0$ be the minimal wave speed in $M$.
If
$$\|u\|_{H^1(M\times [-T,T])}\leq \Lambda_0,\quad \|u\|_{H^1(U\times [-T,T])}\leq \varepsilon_0,$$
then there exist constants $h_0,C,c>0$ such that for any $0<h< h_0$, we have
$$\|u\|_{L^2(\Omega_v(h))}\leq C \exp(h^{-c n}) \frac{\Lambda_0}{\Big(\log\big(1+\frac{h^{-1}\Lambda_0}{\|f\|_{L^2(M\times [-T,T])}+h^{-2}\varepsilon_0}\big)\Big)^{\frac12}}\, .$$
The domain $\Omega_v (h)$ is defined by
%short for $\Omega_{U,T,v}(h)$
\begin{equation}\label{Omega-UTv}
\Omega_{v}(h)=\big\{(x,t)\in (M\setminus U)\times [-T,T]: vT-v|t|-d(x,\partial U) >\sqrt{h},\; d(x,\partial M)>h \big\},
\end{equation}
where $d$ denotes the Riemannian distance of $M$. The constants $h_0,C$ depend on $n,m,T,v,$ $\max_i\|v_i\|_{C^1}, \|L\|_{\infty}$ and geometric parameters; $c$ is an absolute constant.
%the power of the log part can be arbitrary $theta\in (0,1)$ instead of $1/2$.
\end{theorem}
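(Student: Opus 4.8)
\medskip
\noindent\textbf{Proof strategy.}
The plan is to reduce Theorem~\ref{uc-manifold} to a local quantitative unique continuation estimate for a single wave operator, and then to globalize that estimate by iterating it over a foliation of $\Omega_v(h)$ by strictly pseudoconvex hypersurfaces, keeping explicit track of how the constants degrade along the chain. Two issues are resolved already at the local level. First, the coupling terms $L_i(Du,u)$ in \eqref{eq_system} are of lower order with coefficients bounded by $\|L\|_\infty$, so in a Carleman estimate with large parameter $\tau$ they are absorbed into the left-hand side as soon as $\tau\gtrsim 1+\|L\|_\infty^2$; after this absorption the system decouples, patch by patch, into $m$ scalar Carleman inequalities for the operators $P_i$, linked only through a common weight. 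Second, because the region \eqref{Omega-UTv} is the double light cone of the slowest speed $v$, and this cone sits inside the region of validity of the scalar Tataru estimate for each individual $P_i$ (as $v_i\ge v$ enlarges the cone $d(x,\partial U)<v_i(T-|t|)$), a single weight adapted to speed $v$ can be made simultaneously strictly pseudoconvex for all the $P_i$ throughout that cone.

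\medskip
\noindent\textbf{The local step.}
Since the coefficients in \eqref{def-wave} and of the $L_i$ are time-independent, we exploit Tataru's mechanism \cite{Tataru1} for coefficients analytic in part of the variables: replace $u$ by its Gaussian mollification in time
$$u_\lambda(t,x)=\sqrt{\tfrac{\lambda}{\pi}}\int_\R e^{-\lambda(t-s)^2}\chi(s)\,u(s,x)\dd s,$$
with $\chi$ a cutoff equal to $1$ on $[-T/2,T/2]$ and supported in $[-T,T]$. Then $u_\lambda$ extends holomorphically in $t$ to a complex strip of width $\sim\lambda^{-1/2}$, it satisfies on $M\times[-T/2,T/2]$ the same system with right-hand side $f_\lambda+r_\lambda$ where $\|f_\lambda\|_{L^2}\lesssim\|f\|_{L^2}$ and $\|r_\lambda\|_{L^2}\lesssim e^{-c\lambda T^2}\Lambda_0$ (the tail created by $\chi'$, estimated by the a priori bound), and the $H^1$-norm of $u_\lambda$ on each spatial slice is controlled by $\Lambda_0$, resp.\ by $\varepsilon_0$ over $U$. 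Applying to $u_\lambda$ a Carleman estimate with a weight $e^{\mu\psi(x,t)}$ whose level sets are strictly pseudoconvex for every $P_i$ — where $\psi$ has spatial gradient of controlled nonzero length and decays fast enough in $|t|$, the holomorphic extension being exactly what permits the $t$-dependence of the weight — then cutting off between two neighbouring level sets, absorbing the commutator and the $L_i$ terms into $\tau$, and finally optimizing over $\lambda$ against the tail $e^{-c\lambda T^2}\Lambda_0$, produces a local estimate of the form
$$\|u\|_{L^2(B')}\le C\,\frac{\Lambda_0}{\big(\log(1+\Lambda_0/E)\big)^{1/2}},\qquad E:=\|f\|_{L^2}+\|u\|_{L^2(B)},$$
where $B'$ is obtained from a ball $B\Subset M$ by advancing one pseudoconvex surface a definite amount, and $C$ depends only on the local geometry. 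The margins $vT-v|t|-d(x,\partial U)>\sqrt h$ and $d(x,\partial M)>h$ in \eqref{Omega-UTv} are what keep these surfaces uniformly strictly pseudoconvex, noncharacteristic, and disjoint from $\partial M$, so that the Carleman estimate has no boundary contributions.

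\medskip
\noindent\textbf{Globalization.}
Cover $\Omega_v(h)$ by $N\sim h^{-n}$ balls of radius comparable to $h$, arranged so that every such ball is joined to $U\times[-T,T]$ by a chain of overlapping balls along which the local pseudoconvex foliation advances monotonically; that such chains exist is exactly the statement that each point of $\Omega_v(h)$ lies at the top of a stack of pseudoconvex surfaces rooted in $U\times[-T,T]$, which holds because of the conical form of \eqref{Omega-UTv}. Composing the local estimate along such a chain, starting from $\|u\|_{L^2(U\times[-T,T])}\le\varepsilon_0$, propagates the quantity $\Lambda_0/(\log(1+\Lambda_0/E))^{1/2}$ from ball to ball; the $N\sim h^{-n}$ iterations multiply the constant by $C^N=\exp(O(h^{-n}))$ and the derivative losses from the radius-$h$ cutoffs turn $\varepsilon_0$ into $h^{-2}\varepsilon_0$ and $\Lambda_0$ into $h^{-1}\Lambda_0$ inside the logarithm, which gives precisely the bound in Theorem~\ref{uc-manifold}, with $\exp(h^{-cn})$ in front and an absolute exponent $c$ because $N\le C_{\mathrm{geom}}\,h^{-n}$ with the geometric constant absorbed into $C$.

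\medskip
\noindent\textbf{Main obstacle.}
The delicate part is making the globalization genuinely uniform in $h$: one must construct, for every small $h$, a chain of pseudoconvex weight functions covering $\Omega_v(h)$ whose length is $O(h^{-n})$ and whose per-step Carleman constants are controlled only by $n,m,T,v,\max_i\|v_i\|_{C^1},\|L\|_\infty$ and fixed geometric data, and one must verify that advancing the foliation out of $U$ does not let the constants accumulate faster than $C^N$. In particular, the simultaneous strict pseudoconvexity of one weight for all the operators $P_i$ on each patch — needed so that the coupling $L_i(Du,u)$ can be absorbed — together with the uniform control of the geometric parameters entering $h_0$ and $C$, is where most of the work lies; the time-analytic mollification and the optimization in $\lambda$, balanced against both the tail $e^{-c\lambda T^2}\Lambda_0$ and the Carleman parameter $\tau$, are responsible for the logarithmic (rather than Hölder) modulus of continuity.
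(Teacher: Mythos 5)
Your outline of the local step matches the paper's mechanism: the Gaussian time-mollification (realized in the paper as the conjugation by $e^{-\epsilon D_0^2/2\tau}$ in Proposition~\ref{Carleman-system}), the absorption of the lower-order coupling terms $L_i(Du,u)$ into the left-hand side for large $\tau$, and the use of a weight $\psi$ adapted to the slowest speed $v$ so that the level sets are simultaneously non-characteristic for every $P_i$ (this is exactly the computation $-p_i(\cdot,\nabla\tilde\psi_j)\ge 4\tilde\psi_j$ in the paper's proof, which uses $v_i\ge v$). The cut-off near $\partial U$ producing the factors $h^{-1}$ and $h^{-2}$ also agrees with the paper's construction via the function $\eta$.

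The globalization step, however, contains a genuine gap. You claim the local estimate takes the form
\begin{equation*}
\|u\|_{L^2(B')}\le C\,\frac{\Lambda_0}{\bigl(\log(1+\Lambda_0/E)\bigr)^{1/2}},
\qquad E=\|f\|_{L^2}+\|u\|_{L^2(B)},
\end{equation*}
and that composing it along a chain of $N\sim h^{-n}$ balls simply multiplies the constant by $C^N=\exp(O(h^{-n}))$, preserving the single logarithm. That is not what happens if you compose log-estimates: inserting the bound on $\|u\|_{L^2(B)}$ into $E$ at the next step replaces $\log(1+\Lambda_0/E)$ by $\log\log(1+\Lambda_0/E_0)$, and after $N$ steps you obtain the $N$-fold iterated logarithm $\log^{(N)}$, which becomes vacuous after only $O(\log^*(\cdot))$ iterations — far fewer than $h^{-n}$. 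You would never reach the claimed domain with a non-trivial estimate. The paper avoids this precisely by \emph{not} propagating the log-estimate. What is propagated along the chain (Lemma~\ref{lemma_exp1} and Lemma~\ref{iteration}, following \cite{BKL1,BKL2}) is a frequency-localized quantity $\|A(D_0/\omega)\,b((y-y_j)/r)\,u_{\cdot,j}\|_{H^1}\le C_j\exp(-c_1\mu_j^{\alpha^2})$, with the Carleman/analyticity parameter degrading geometrically as $\mu_{j+1}=\mu_j^{\alpha}/c_3$, $\alpha\in(0,1)$; the conversion to a single-log modulus of continuity happens \emph{once}, at the very end (Propositions~\ref{uc-stability}, \ref{global}), by combining the low-frequency estimate with a trivial high-frequency estimate. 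This two-scale structure — keep everything at the level of $\mu$ during the iteration, take $\log$ only at the end — is what produces a single $\log$ with a large constant in front rather than an iterated $\log$, and it is the essential ingredient your proposal is missing. The paper's proof is in fact a reduction to Theorem~3.1 of \cite{BILL} (which packages the iteration), with the only new checks being the non-characteristicity of the $v$-adapted weights for all $P_i$ and the interior cut-off on $U$; reconstructing the iteration from scratch, as you attempt, requires making the frequency-localization explicit.
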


Theorem \ref{uc-manifold} will be proved in Section \ref{sec-manifold}, using the technical tools developed in Section \ref{section-uc}, in particular Proposition \ref{global}. An illustration and a brief discussion of the domain $\Omega_v(h)$ can be found in Figure \ref{fig_domains} below and Remark \ref{remark-domains}.

\begin{figure}[h]
  \begin{center}
    \includegraphics[width=0.9\linewidth]{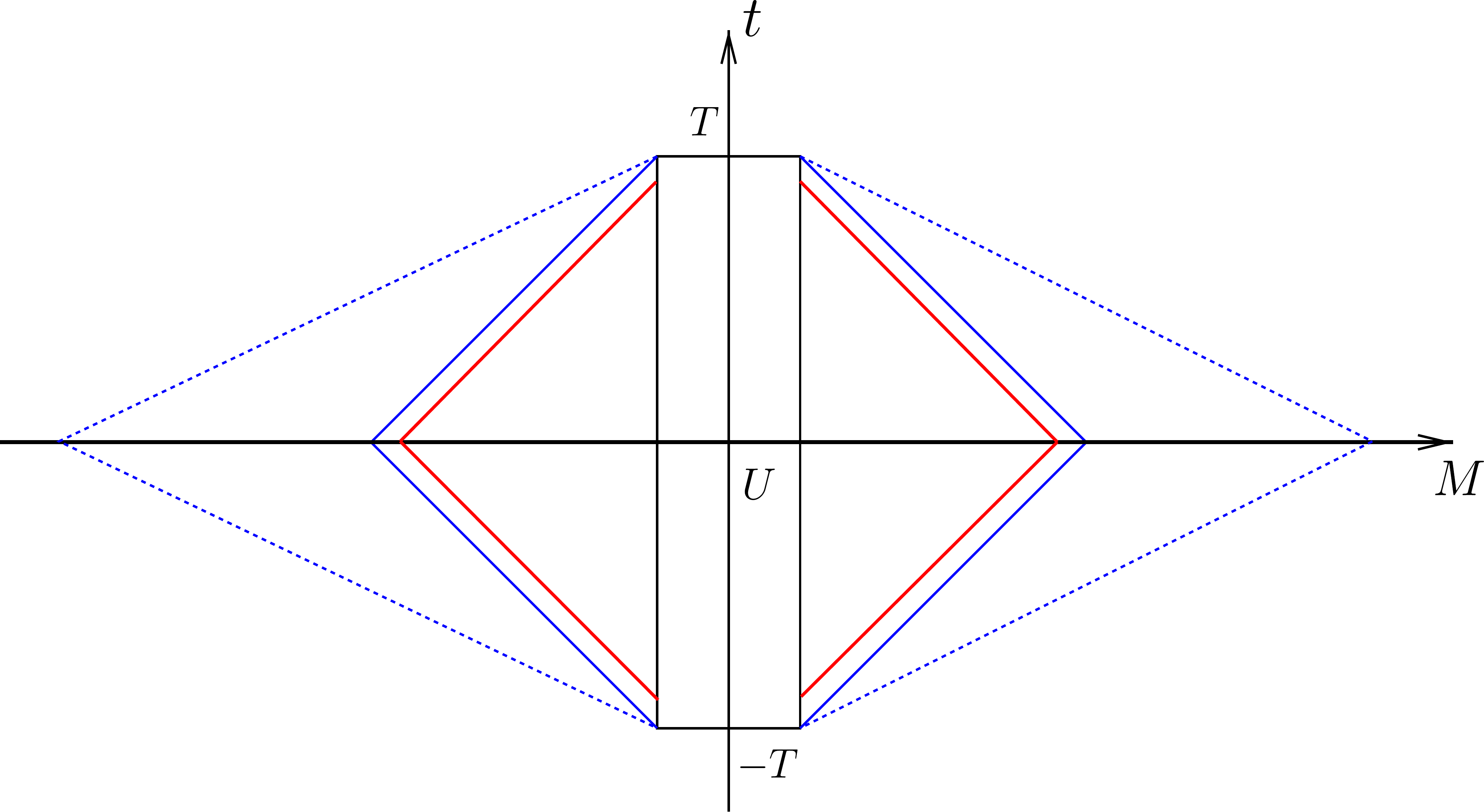}
    \caption{An illustration of domains in $1+1$ dimension.
    The rectangle in the middle is $U\times [-T,T]$.
    In the case of wave speeds $v_i\equiv 1$, the domain enclosed by the solid blue lines is the optimal domain, and the domain enclosed by the red lines is $\Omega_v(h)$ defined in \eqref{Omega-UTv}.
    The distance between the blue and red lines is of order $\sqrt{h}$.
     In general, if the wave speeds are not constant, the domain $\Omega_v(h)$ can be significantly smaller than the optimal domain. 
     For instance, in the case of two equations with $v_1\equiv 1$ and $v_2\equiv 2$, the dashed blue lines enclose the optimal domain for the scalar wave equation with wave speed $2$, while the domain $\Omega_v(h)$, propagating according to the slower speed, is still enclosed by the red lines.
     }
    \label{fig_domains}
  \end{center}
\end{figure}

\smallskip
Theorem \ref{uc-manifold} yields the following stable continuation result on the whole manifold.

\begin{corollary} \label{uc-whole-loglog}
Let $(M^n,g)$ be a compact, orientable, smooth Riemannian manifold of dimension $n\geq 2$ with smooth boundary $\partial M$, and $U$ be a connected open subset of $M$ with smooth boundary $\partial U$. Assume $\overline{U}\cap \partial M=\emptyset$.
Suppose $u=(u_1,\cdots,u_m)$, $u_i \in H^1(M\times [-T,T])$ is a solution of the system of hyperbolic equations \eqref{eq_system} with $f=(f_1,\cdots,f_m)=0$. 
Assume $T>2({\rm diam}(M)+1)/v$, where $v=\min\limits_i \inf\limits_{x\in M} v_i(x)>0$ is the minimal wave speed in $M$. 
If
$$\|u\|_{H^1(M\times [-T,T])}\leq \Lambda_0,\quad \|u\|_{H^1(U\times [-T,T])}\leq \varepsilon_0,$$
then there exist constants $\widehat{\varepsilon_0},C,c>0$ such that for any $0<\varepsilon_0<\widehat{\varepsilon_0}$, we have
$$\|u \|_{L^2 ((M \setminus U)\times [-\frac{T}{2},\frac{T}{2}] )}\leq C \big(\log |\log \varepsilon_0| \big)^{-c} ,$$
where $C$ is independent of $\varepsilon_0$, and $c$ depends only on $n$.
Furthermore, for any $\theta\in (0,1)$, by interpolation,
$$\|u \|_{H^{1-\theta}((M\setminus U)\times [-\frac{T}{2},\frac{T}{2}])}\leq C \big(\log |\log \varepsilon_0| \big)^{-\theta c}.$$
%Here $C$ also depends on $\Lambda_0$ and $\theta$.
\end{corollary}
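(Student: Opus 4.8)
The plan is to apply Theorem~\ref{uc-manifold} with $f=0$, to observe that for $|t|\le T/2$ the domain $\Omega_v(h)$ of \eqref{Omega-UTv} exhausts $(M\setminus U)\times[-\tfrac T2,\tfrac T2]$ except for a thin collar of $\partial M$, to bound $u$ on that collar crudely from the a priori estimate, and then to balance $h$ against $\varepsilon_0$. For the covering claim, fix $h<1$ and $|t|\le T/2$; for every $x\in M\setminus U$ one has $d(x,\partial U)\le\diam(M)$, so
\[
vT-v|t|-d(x,\partial U)\ \ge\ \tfrac{vT}{2}-\diam(M)\ >\ 1\ >\ \sqrt h ,
\]
using the hypothesis $T>2(\diam(M)+1)/v$. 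Hence $\Omega_v(h)\supset\{x\in M\setminus U: d(x,\partial M)>h\}\times[-\tfrac T2,\tfrac T2]$, and the remaining part of $(M\setminus U)\times[-\tfrac T2,\tfrac T2]$ is contained in $S_h\times[-\tfrac T2,\tfrac T2]$ with $S_h=\{x\in M:d(x,\partial M)\le h\}$.

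Next I would estimate $u$ on the collar. Since $\partial M$ is smooth and $M$ is compact, $|S_h|\le C_M\,h$; combining this with the Sobolev embedding $H^1(M\times[-T,T])\hookrightarrow L^{2(n+1)/(n-1)}(M\times[-T,T])$ (valid for $n\ge2$) and H\"older's inequality gives
\[
\|u\|_{L^2(S_h\times[-\frac T2,\frac T2])}\ \le\ C\,\|u\|_{H^1(M\times[-T,T])}\,h^{1/(n+1)}\ \le\ C\Lambda_0\, h^{1/(n+1)} .
\]

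Now assemble. With $f=0$ the logarithmic factor in Theorem~\ref{uc-manifold} is $\log\bigl(1+h\Lambda_0/\varepsilon_0\bigr)$, which is $\ge\tfrac12|\log\varepsilon_0|$ once $\varepsilon_0$ is small relative to $h$ and $\Lambda_0$ (this is not circular, as $h$ will depend on $\varepsilon_0$ only through $\log|\log\varepsilon_0|$). Therefore, adding the two pieces,
\[
\|u\|_{L^2((M\setminus U)\times[-\frac T2,\frac T2])}\ \le\ C\exp(h^{-cn})\,\Lambda_0\,|\log\varepsilon_0|^{-1/2}\ +\ C\Lambda_0\, h^{1/(n+1)} .
\]
I would then choose $h=h(\varepsilon_0)$ so that $\exp(h^{-cn})=|\log\varepsilon_0|^{1/4}$, i.e. $h=\bigl(\tfrac14\log|\log\varepsilon_0|\bigr)^{-1/(cn)}$. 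For $\varepsilon_0$ below a threshold $\widehat{\varepsilon_0}$ (chosen so that $h<\min(1,h_0)$ and the lower bound on the logarithm is legitimate) the first term is $O(|\log\varepsilon_0|^{-1/4})$, which is negligible, and the second term is $\sim(\log|\log\varepsilon_0|)^{-1/(cn(n+1))}$ and dictates the rate. Since $c$ in Theorem~\ref{uc-manifold} is absolute, this is the first inequality, with exponent depending only on $n$. The final inequality follows from the interpolation $H^{1-\theta}=[L^2,H^1]_{1-\theta}$ on $(M\setminus U)\times[-\tfrac T2,\tfrac T2]$, via $\|u\|_{H^{1-\theta}}\le\|u\|_{L^2}^{\theta}\|u\|_{H^1}^{1-\theta}$ together with $\|u\|_{H^1}\le\Lambda_0$.

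The only step that is not purely formal is the collar estimate: $\Omega_v(h)$ necessarily omits a tubular neighbourhood of $\partial M$, yet $u$ is known to be small only inside $U$, which is disjoint from $\partial M$, so the collar contribution must be absorbed into the a priori $H^1$ norm. It is exactly this $h^{1/(n+1)}$ term — after the $h\sim(\log|\log\varepsilon_0|)^{-1/(cn)}$ choice that the $\exp(h^{-cn})$ prefactor of Theorem~\ref{uc-manifold} forces — that downgrades the rate from the single logarithm of Theorem~\ref{uc-manifold} to the double logarithm $(\log|\log\varepsilon_0|)^{-c}$ claimed here.
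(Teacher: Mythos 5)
Your argument is correct and follows essentially the same route as the paper, which decomposes $(M\setminus U)\times[-\tfrac T2,\tfrac T2]$ into $\Omega_v(h)$ plus a boundary collar, applies Theorem~\ref{uc-manifold} on the former and the $H^1\hookrightarrow L^{2(n+1)/(n-1)}$ Sobolev embedding plus H\"older on the latter, and then balances $h$ against $\varepsilon_0$ (the paper packages the collar estimate as Proposition~\ref{uc-whole} before choosing $h$; your one-step version and slightly different choice $\exp(h^{-cn})=|\log\varepsilon_0|^{1/4}$ give the same $(\log|\log\varepsilon_0|)^{-c}$ rate).
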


\smallskip
\subsection{Kinematic inverse rupture problem}

Next, we apply our results to an elasticity system to study the \emph{kinematic inverse rupture problem} of determining the jump of particle velocity across the rupture surface and the friction force, see \cite{FSG}.
Inverse problems for elasticity systems have been extensively studied in various settings, e.g. inverse source problems \cite{BCL,BLZ,LHL}, inverse obstacle scattering \cite{DLW,LLS,LY}, seismic inverse scattering \cite{SD}, and see e.g. \cite{BFPRV,BDF,CHN,CNO,ER,MR,NT,NU1,NU2,YZ} for inverse boundary value problems of determining the elastic body, \cite{DUW,UZ} for inverse problems for nonlinear elastic wave equations, and \cite{BFKL,Kress,NUW,NW} for identifying inclusions or cracks.

\smallskip
In our setting, let $M^3\subset \mathbb{R}^3$ be a compact domain of dimension $3$ with smooth boundary representing the solid Earth. Let $\Surf_\frc$ be a ($2$-dimensional) smooth rupture surface satisfying $\overline{\Surf_\frc}\cap \partial M=\emptyset$.

\begin{figure}[h]
  \begin{center}
    \includegraphics[width=0.4\linewidth]{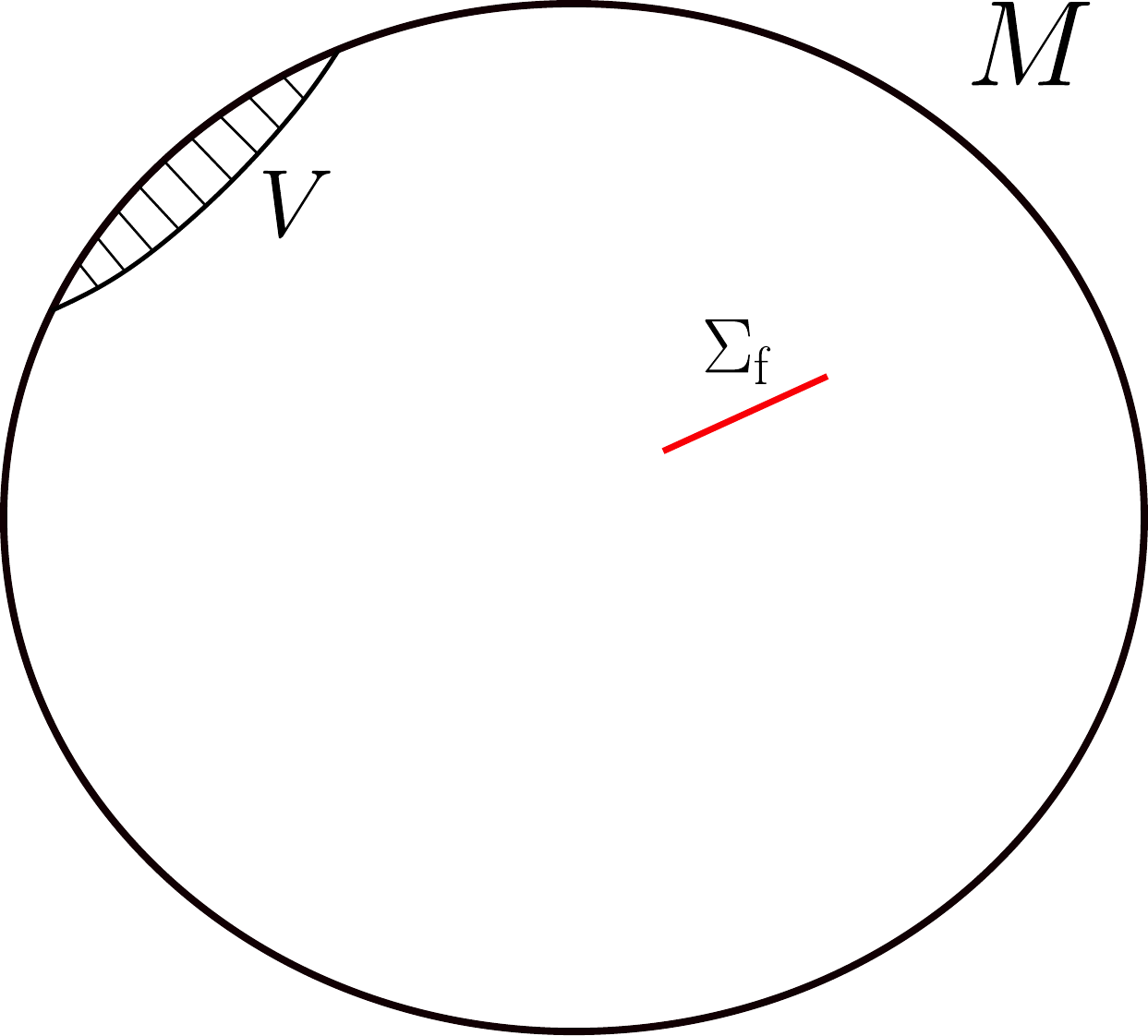}
    \caption{Rupture surface $\Surf_\frc$. The set $V$ is the observation domain. The figure is motivated by the \emph{Hi-net} seismograph network\protect\footnotemark in Japan. Hi-net seismic stations have seismometers installed in boreholes with varying depths. Note that the figure is not to scale.}
    \label{fig_intro}
  \end{center}
\end{figure}
\footnotetext{High Sensitivity Seismograph Network Japan ({\tt https://www.hinet.bosai.go.jp}).}

The seismic wave $\veu$ is modeled by the following equation of motion
\begin{equation} \label{eq-seismic-intro}
\rho \partial_t^2 \veu -\nabla\cdot(\tsLaINI:\nabla\veu)=0 \quad \textrm{in } M \setminus \overline{\Surf_\frc},
\end{equation}
where $\tsLaINI$ is the prestressed elasticity tensor.
In the case of isotropy and hydrostatic prestress $\tsT^0 = -p^0
\boldsymbol{I}$, the prestressed elasticity tensor $\tsLaINI$ has the form
\begin{equation} \label{T0-simple-intro}
   \itLaINI_{ijkl} = \lambda \delta_{ij} \delta_{kl}
   + \mu (\delta_{ik} \delta_{jl} + \delta_{il} \delta_{jk})
   - p^0 (\delta_{ij} \delta_{kl} - \delta_{il} \delta_{jk}) .
\end{equation}
In such case, the equation \eqref{eq-seismic-intro} has the form of the classical elasticity system which can be written as the system of hyperbolic equations \eqref{eq_system}. With our stability results on the unique continuation for the system \eqref{eq_system}, we can determine the displacement $\veu$ on both sides of the rupture surface, and the friction force $\vetau_\frc$, see Section \ref{section-fault}.

\smallskip
\noindent
%\begin{theorem}
{\bf Informal Formulation of Result.} {\it Let $M^3$ be the solid Earth with smooth boundary, and $\Surf_\frc$ be a smooth rupture surface.
We observe the seismic wave $\veu$ on the time interval $[-T,T]$ on an open subset $V\subset M\setminus \Surf_\frc$ (see Figure \ref{fig_intro}).
Then for sufficiently large $T$, we can determine the displacement $\veu$ on both sides of the rupture surface and the friction force $\vetau_\frc$, with explicit estimates in suitable norms.}
%\end{theorem}
%One can check that with this assumption for $U$ here, there exist $D_1,D_2$ satisfying Assumption 4.1.

\smallskip
A precise formulation is given in Theorem \ref{determine-fault} and Corollary \ref{coro-higher-order}.
The tangential jump of particle velocity across the rupture surface
signifies the slip rate identified as a vector field. 
We write $\mathbi{s}$ for this quantity. In other words, 
$\mathbi{s}$ is the tangential component of 
    \begin{align*}
\jmp{\dot \veu} := \p_t (\veu_{+} - \veu_{-}),
\quad 
\veu_{\pm}:= \lim_{h\to 0^{\pm}} \veu (z+h\ven,t),\quad z\in \Surf_\frc,
    \end{align*}
where $\ven$ is a unit normal of the rupture surface.
The tangential component $\vetau_\frc$ of the (dynamic) traction $\vetau$
at the fault surface is the friction force.
The traction is  
    \begin{align*}
\vetau = \ven\cdot\tsT^0 +\vetau_1(\veu)+\vetau_2(\veu),
    \end{align*}
where $\vetau_1(\veu)$ and $\vetau_2(\veu)$ are defined by 
\eqref{eq:fric bc var} below, and $\ven\cdot\tsT^0$
is the contribution from
the known, static prestress $\tsT^0$. 
The normal component 
    \begin{align*}
\sigma_n = \ven \cdot \vetau
    \end{align*}
of this traction stands for the normal stress. The slip rate and normal stress are related to the
friction force through a friction law of the form
    \begin{align*}
\vetau_\frc = F(\mathbi{s}, \sigma_n).
    \end{align*}
It is typically assumed
that the friction force $\vetau_\frc$ and slip rate $\mathbi{s}$ are aligned, that is, parallel.
Several choices of $F$ have been introduced in the geophysics literature. 
Examples include the Slip Law and Aging Law in Rate- and State-dependent
Friction. It is common practice to invoke a simpler, linear
slip-weakening model to describe friction during a rupture when
afterslip is not considered, see e.g. \cite{DDLL}.
%Following \cite{DDLL}, when $\mathbi{s} \ne 0$ this can be written as 
%    \begin{align*}
%F(\mathbi{s}, \sigma_n) = -\sigma_n \mu(\ell) \frac{\mathbi{s}}{|\mathbi{s}|},
%    \end{align*}
%where 
%    \begin{align*}
%\ell(t) = \int_0^t | \mathbi{s}(t')| dt' 
%    \end{align*}
%is the slip path length and $\mu$ is a piecewise affine function parametrized by the static $\mu_s$ and kinetic $\mu_d$ (or dynamic) friction coefficients and by the critical slip-weakening distance $d_0$:
%    \begin{align*}
%\mu(\ell) = \begin{cases}
%\mu_s - (\mu_s - \mu_d) \ell/d_0\, , & \ell < d_0,
%\\
%\mu_d \, , & \ell \geq d_0.
%\end{cases}
%    \end{align*}

%Similarly to the linear slip-weakening model, 
The friction law $F(\mathbi{s}, \sigma_n)$ in general is given in terms of a
few (presumably time-independent) parameter functions, and is typically a nonlinear integral operator.
In the geophysics literature this is expressed by introducing a state-variable function, see \cite{RLR} for the case of Rate- and State-dependent Friction.
The unique continuation provides the slip rate $\mathbi{s}$, normal stress $\sigma_n$ and friction force $\vetau_\frc$.
After fixing a parametric form of the friction law $F$, the inverse friction problem concerns the (conditional) recovery of the mentioned parameter functions in $F$. 
The inverse friction problem with one earthquake can be considered as a single measurement inverse problem for the parameters in $F$.
In the case of Rate- and State-dependent Friction, an ordinary differential equation determines a map from slip rate and normal stress to state-variable function that is also given in terms of a few parameter functions. We note that the regularity of
solutions restricts the allowable mapping property of the friction ``coefficient'' in the Amontons-Coulomb law that is widely applied.
%For example, in the case of the linear slip-weakening model, the problem is to recover $\mu_s$, $\mu_d$ and $d_0$.

We plan to analyze the inverse friction problem in a follow-up paper. To facilitate this, in view of nonlinearity of $F$, we give stability results for the unique continuation problem in Sobolev spaces that are Banach algebras, see Corollary \ref{coro-higher-order} and Remark \ref{rem_balg} below. 

%\medskip
%\noindent
%{\bf Acknowledgement.}

\section{Unique continuation for system of hyperbolic equations}
\label{section-uc}

Consider the system of hyperbolic equations on $\R\times \R^n$,
\begin{equation}
P_i u_i+L_i (D u,u)=f_i,\quad  i=1,\cdots,m,
\end{equation}
where $P_i$ is the wave operator \eqref{def-wave} with time-independent wave speed $v_i$,
%\begin{equation*}
%P_i= \partial_t^2 - v_i(x)^2 \Delta_g, \quad \Delta_g=\sum_{j,k=1}^n g^{jk}(x) D_j D_k+\sum_{j=1}^n h_j(x)D_j+q(x),
%\end{equation*}
and $L_i$ are linear functions of $D u_k,u_k$ ($k=1,\cdots,m$)
with time-independent $L^{\infty}(\R^n)$ coefficients.
More precisely,
\begin{equation} \label{def-Li}
L_i(Du,u)=\sum_{k=1}^m \sum_{l=0}^n L_{i;kl} (D_l u_k) + L_{i;k}u_k,
\end{equation}
where $L_{i;kl},L_{i;k}\in L^{\infty}(\R^n)$.
We assume $v_i\in C^1(\R^n)$, $v_i>0$, $g^{jk}\in C^1(\R^n)$ and $h_j,q\in C^{0}(\R^n)$.
%all coefficients are real.

\smallskip
We will frequently use the following notations.
Denote
\begin{equation}
\|L_i\|_{\infty} := \max \Big\{ \max_{k,l} \|L_{i;kl}\|_{L^{\infty}(\R^{n})}, \max_{k} \|L_{i;k}\|_{L^{\infty}(\R^{n})} \Big\},\quad \|L\|_{\infty}:=\max_i \|L_i\|_{\infty}.
\end{equation}
We write 
$$u=(u_1,\cdots,u_m), \quad f=(f_1,\cdots,f_m).$$
Denote by $\|\cdot\|_0$ the $L^2(\R^{n+1})$-norm and by $\|\cdot\|_1$ the $H^1(\R^{n+1})$-norm.
Recall the weighted norm
\begin{equation}
\|u_i\|_{1,\tau}^2 :=\tau^2\|u_i\|_0^2+\|D u_i\|_0^2\,.
\end{equation}
We denote
$$\|u\|_0^2=\sum_{i=1}^m \|u_i\|_0^2, \quad \|u\|_{1,\tau}^2=\sum_{i=1}^m \|u_i\|_{1,\tau}^2.$$

Let $A(D_0)$ be the pseudo-differential operator with symbol $a(\xi_0)$, where $a\in C^{\infty}(\R)$ is a smooth function. It is formally defined as
$$A(D_0) w:= \mathcal{F}^{-1}_{\xi_0 \to  t} a(\xi_0) \mathcal{F}_{t'\to \xi_0} w,$$
where $\mathcal{F},\mathcal{F}^{-1}$ stand for the Fourier transform and its inverse. In particular, we consider the operator $e^{-\epsilon D_0^2/2\tau}$,
$$e^{-\epsilon D_0^2/2\tau} w:= \mathcal{F}^{-1}_{\xi_0 \to  t} e^{-\epsilon \xi_0^2/2\tau} \mathcal{F}_{t'\to \xi_0} w.$$
It can also be understood as an integral operator in the time variable with the kernel $(\tau/2\pi\epsilon)^{1/2} e^{-\tau |t'-t|^2/2\epsilon}$.

\smallskip
Let $\Gamma$ be a conical subspace of the cotangent bundle $T^{\ast}\R^{n+1}$, and let $\Gamma_{y_0}$ be its fiber at $y_0 \in \R^{n+1}$.
We recall the definition of a strongly pseudoconvex function in $\Gamma$ (Definition 2.4 in \cite{Tataru2}). A $C^2$ real-valued function $\phi$ is called \emph{strongly pseudoconvex in $\Gamma$} with respect to a partial differential operator $P$ at $y_0$ if
$$\textrm{Re}\{\overline{p},\{p,\phi\}\}(y_0,\xi)>0\quad \textrm{ on } p(y_0,\xi)=0,\; 0\neq \xi\in \Gamma_{y_0},$$
and
$$\frac{1}{i\tau}\{\overline{p(y,\xi+i\tau\phi'(y))}, p(y,\xi+i\tau\phi'(y))\}(y_0,\xi)>0,$$
for any $0\neq \xi\in \Gamma_{y_0}$ satisfying $p(y_0,\xi+i\tau \phi'(y_0))=0$, $\tau>0$. Here $p$ denotes the principle symbol of $P$, and $\{\cdot,\cdot\}$ denotes the Poisson bracket. 

When $P$ is a second order operator, the last condition above is void for non-characteristic functions with respect to $P$. In particular, we will later consider the following type of function
$$\psi(t,x)=\big(T-|x-z|\big)^2-t^2,$$
which is non-characteristic in $\big\{(t,x)\in \R\times \R^{n}: \psi(t,x) >0 \big\}$ with respect to the wave operator with constant wave speed $1$, where $T>0$ and $z\in \R^{n}$ are fixed.

\smallskip
In the coordinate $y=(t,x)\in \R\times \R^{n}$, the conormal bundle over $\R^{n+1}$ with respect to the foliation $x=\textrm{const}$ is 
\begin{equation*} \label{def-conormal}
N^{\ast} F:=\big\{(y,\xi)\in T^{\ast}\R^{n+1}: \xi=(\xi_0, \xi_1,\cdots,\xi_{n}), \,\xi_0=0 \big\}.
\end{equation*}
%Its reduction to a subset $K\subset \R^{n+1}$ is $\Gamma_K:=\{(y,\xi)\in T^{\ast}K: \xi_0=0\}$, and 
The conormal bundle over a subset $K\subset \R^{n+1}$ (with respect to the foliation $x=\textrm{const}$) is 
\begin{equation} \label{def-fiber}
N^{\ast}_K F:=\big\{(y,\xi)\in N^{\ast}F : y\in K \big\}.
\end{equation}

\smallskip
\subsection{Local estimates.}

Let us recall the following Carleman estimate in \cite{Tataru1}.

\begin{theorem}[Tataru]\label{th_carleman} 
Let $\Omega$ be an open subset of $\R\times\R^n$
and $P$ be the wave operator with time-independent coefficients.
%The coefficients of $P$ are real.
Let $y_0 \in \Omega$ and $\psi \in C^{2,\rho}(\Omega)$ for some fixed $\rho\in(0,1)$, such that $\psi(y_0)=0$, $\psi'(y_0)\neq 0$ and $S=\{y\in \Omega:\psi(y)=0\}$ is an  oriented hypersurface non-characteristic at $y_0 \in S$. 

Then there exist $\kappa>0$ and a real-valued quadratic polynomial $\phi$, such that $\phi$ is strongly pseudoconvex in the conormal bundle over $B_{\kappa}(y_0)$ with respect to $P$, with the property that $\phi(y_0)=0$ and
\begin{equation}\label{phi-support-condition}
\{y: \psi(y) \leq 0\}\subset \{y: \phi(y)<0\}\cup \{y_0\} \quad \textrm{ in }B_{\kappa}(y_0).
\end{equation}

As a consequence, there exist constants $\epsilon_0,\tau_0, C, R$, such that for $\epsilon<\epsilon_0$ and $\tau > \tau_0$, we have
%$R<\kappa$
\begin{eqnarray*}
%\label{carleman}
 \|e^{-\epsilon D_0^2/2\tau} e^{\tau \phi} u\|_{1,\tau} \le C \, \tau^{-1/2}\|e^{-\epsilon D_0^2/2\tau} e^{\tau \phi} Pu\|_{0} + C\, e^{-\tau \kappa^2/4\epsilon} \|e^{\tau \phi} u\|_{1,\tau},
\end{eqnarray*}
whenever $u \in H^1_{loc}(\Omega)$ satisfying $Pu \in L^2(\Omega)$ and $\supp(u) \subset B_{R}(y_0)$.
\end{theorem}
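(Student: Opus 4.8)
This is Tataru's Carleman estimate; the plan is to recall its proof, which splits into a geometric part (construction of the strongly pseudoconvex quadratic polynomial $\phi$ obeying \eqref{phi-support-condition}) and an analytic part (the inequality itself), the latter being the substance.

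\emph{Construction of $\phi$.} I would use H\"ormander's convexification, exploiting that on the conormal bundle $N^{\ast}F$, that is, in the directions $\xi_0=0$, the principal symbol of the wave operator equals $-v(x)^2 g^{jk}(x)\xi_j\xi_k$, which is elliptic. Set $\Phi=e^{\lambda\psi}-1$. H\"ormander's convexification lemma, applied to this situation that is elliptic along $N^{\ast}F$, shows $\Phi$ is strongly pseudoconvex in $N^{\ast}F$ at $y_0$ once $\lambda$ is large: the first bracket condition is tested on the real characteristic set, which is empty at $\xi_0=0$, while the second, subelliptic, condition is made strictly positive by taking $\lambda$ large (or is void, depending on the sign of $p(y_0,\psi'(y_0))$). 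Since strong pseudoconvexity at $y_0$ depends on the weight only through its value, gradient and Hessian at $y_0$, I would then replace $\Phi$ by its second-order Taylor polynomial at $y_0$ and subtract $\mu|y-y_0|^2$ for a small $\mu>0$: the truncation leaves the conditions at $y_0$ unchanged, the subtraction is an arbitrarily small perturbation of the strict inequalities, and by continuity and homogeneity the resulting quadratic polynomial $\phi$ remains strongly pseudoconvex, and non-characteristic, on a small ball $B_\kappa(y_0)$. Finally, \eqref{phi-support-condition} holds because on $\{\psi\le 0\}$ one has $\Phi\le 0$, while $\Phi-\phi=\mu|y-y_0|^2+O(|y-y_0|^3)$, so that $\phi(y)\le-\mu|y-y_0|^2+O(|y-y_0|^3)<0$ for $0<|y-y_0|<\kappa$ once $\kappa$ is small compared with $\mu$.

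\emph{The Carleman estimate.} Write $G=e^{-\epsilon D_0^2/2\tau}$ for the Gaussian Fourier multiplier in $t$, $w=e^{\tau\phi}u$, and $P_\phi=e^{\tau\phi}Pe^{-\tau\phi}$, so that $e^{\tau\phi}Pu=P_\phi w$ and the full symbol of $P_\phi$ is $p(y,\xi+i\tau\phi'(y))$ modulo lower-order terms. The structural fact driving the proof is that $P$ has $t$-independent coefficients, hence commutes with $G$; so the only non-commutation in the argument is between $G$ and $e^{\tau\phi}$, and since $\phi$ is quadratic, $e^{\tau\phi}$ is Gaussian in $t$, which makes that commutation essentially explicit. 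The crucial observation is that $G$ damps the time-frequencies $|\xi_0|\gg(\tau/\epsilon)^{1/2}$, and $(\tau/\epsilon)^{1/2}\ll\tau$ for $\tau$ large, so $Gw$ is concentrated in the range $|\xi_0|\lesssim\tau$; there, strong pseudoconvexity of $\phi$ in the conormal bundle over $B_\kappa(y_0)$ — which at $\xi_0=0$ is exactly the H\"ormander symbol inequality $\{\mathrm{Re}\,p_\phi,\mathrm{Im}\,p_\phi\}\gtrsim\tau(\tau^2+|\xi|^2)$ on $\{p_\phi=0\}$ near $y_0$, and which by homogeneity and compactness persists in a conical neighborhood of $\xi_0=0$ — yields, through a standard G\aa{}rding / integration-by-parts estimate with the large parameter $\tau$ (valid under $C^1$ regularity of the principal coefficients), the bound $\tau\|v\|_{1,\tau}^2\lesssim\|P_\phi v\|_0^2$ for $v$ localized near $y_0$ in that frequency range. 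I would then apply this to $v=\chi Gw$, with $\chi$ a cutoff supported near $y_0$, and pass the right-hand side from $P_\phi v$ back to $GP_\phi w=G(e^{\tau\phi}Pu)$; the commutators $[\chi,G]$, $[P_\phi,G]$ and $[P_\phi,\chi]$ produced in this step are controlled by the $\tau^{-1/2}$ gain, the small factor $\epsilon^{1/2}$ coming from the width of $G$, and the explicit kernel $(\tau/2\pi\epsilon)^{1/2}e^{-\tau|t-t'|^2/2\epsilon}$ of $G$, leaving (since $\supp u\subset B_R(y_0)$ with $R$ fixed below $\kappa$) a remainder $Ce^{-\tau\kappa^2/4\epsilon}\|e^{\tau\phi}u\|_{1,\tau}$, the exponentially small factor being the value of the Gaussian kernel across the separation, of order $\kappa$, between $\supp w$ and the locus where $\chi$ or the pseudoconvexity of $\phi$ degenerates. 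An equivalent route is to pass to the FBI transform in $t$, which sends $u$ to a function holomorphic in complexified time: a full Carleman estimate with weight $\phi$ then applies, holomorphy in $t$ reducing the needed pseudoconvexity to the conormal directions, $G$ playing the role of $T^{\ast}T$ and the exponentially small term arising as the contour-tail contribution.

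\emph{Expected main obstacle.} The delicate part is the analytic one, and in particular the bookkeeping with the large parameter $\tau$: making the pseudodifferential calculus rigorous under the stated low regularity, and above all keeping the three scales coordinated — the mollification width $(\epsilon/\tau)^{1/2}$, the localization radius $\kappa$ inside which $\phi$ is pseudoconvex and \eqref{phi-support-condition} holds, and the frequency threshold $|\xi_0|\sim\tau$ — precisely enough that commuting the non-local operator $G$ past the cutoffs and past $e^{\tau\phi}$ leaves a remainder with the sharp exponent $\tau\kappa^2/4\epsilon$. This commutation, and hence the exact shape of the error term, is where the $t$-independence of the coefficients is used in an essential way.
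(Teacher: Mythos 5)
The paper does not reprove Theorem~\ref{th_carleman}; it is quoted directly from Tataru \cite{Tataru1} (see also \cite{Tataru2,BKL1}). Your sketch is a faithful reconstruction of Tataru's argument: H\"ormander convexification $e^{\lambda\psi}$ followed by second-order Taylor truncation and subtraction of a small multiple of $|y-y_0|^2$ to produce the strongly pseudoconvex quadratic weight $\phi$ satisfying \eqref{phi-support-condition}, and then the conjugated estimate for the Gaussian-mollified function $e^{-\epsilon D_0^2/2\tau}e^{\tau\phi}u$, with the remainder $e^{-\tau\kappa^2/4\epsilon}$ arising from the tail of the mollifier kernel across the $\kappa$-scale separation. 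The structural points you emphasize---commutativity of $e^{-\epsilon D_0^2/2\tau}$ with $P$ due to time-independence of the coefficients, the Gaussian nature of $e^{\tau\phi}$ owing to $\phi$ being a quadratic polynomial, and the coordination of the three scales $(\epsilon/\tau)^{1/2}$, $\kappa$ and $\tau$---are exactly what drives the proof in \cite{Tataru1}, so the proposal is consistent with the cited source.
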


To begin with, we derive a Tataru-type estimate for the hyperbolic system \eqref{eq_system}.

\begin{proposition} \label{Carleman-system}
Let $\Omega$ be an open subset of $\R\times\R^n$.
Let $y_0 \in \Omega$ and $\psi \in C^{2,\rho}(\Omega)$ for some fixed $\rho\in(0,1)$, such that $\psi(y_0)=0$, $\psi'(y_0)\neq 0$ and $S=\{y\in \Omega:\psi(y)=0\}$ is an  oriented hypersurface non-characteristic at $y_0 \in S$. Suppose $u=(u_1,\cdots,u_m),\, u_i\in H^1(\Omega)$ is a solution of the hyperbolic system \eqref{eq_system} with $f_i\in L^2(\Omega)$.

Then there exist constants $\kappa,\epsilon_0,\tau_0, C, R$ and a real-valued quadratic polynomial $\phi$, as determined in Theorem \ref{th_carleman}, such that the following estimate holds for $\epsilon<\epsilon_0$ and $\tau > \tau_0$,
\begin{eqnarray*}
 \|e^{-\epsilon D_0^2/2\tau} e^{\tau \phi} u\|_{1,\tau} \le C \, \tau^{-1/2}\|e^{-\epsilon D_0^2/2\tau} e^{\tau \phi} f\|_{0} + C \, e^{-\tau \kappa^2/4\epsilon} \|e^{\tau \phi} u\|_{1,\tau},
\end{eqnarray*}
as long as $\supp(u) \subset B_{R}(y_0)$.
\end{proposition}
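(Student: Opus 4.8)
The plan is to treat the coupling terms $L_i(Du,u)$ as a perturbation and absorb them into the left-hand side of Tataru's Carleman estimate (Theorem \ref{th_carleman}) by choosing $\tau$ large. First I would apply Theorem \ref{th_carleman} to each scalar wave operator $P_i$ separately. Since $P_i u_i = f_i - L_i(Du,u)$, the estimate gives, for each $i$,
\begin{equation*}
\|e^{-\epsilon D_0^2/2\tau} e^{\tau \phi_i} u_i\|_{1,\tau} \le C \tau^{-1/2}\|e^{-\epsilon D_0^2/2\tau} e^{\tau \phi_i} (f_i - L_i(Du,u))\|_{0} + C e^{-\tau \kappa_i^2/4\epsilon} \|e^{\tau \phi_i} u_i\|_{1,\tau}.
\end{equation*}
A preliminary point is that Theorem \ref{th_carleman} produces constants $\kappa_i,\epsilon_0^{(i)},\tau_0^{(i)},C_i,R_i$ and a polynomial $\phi_i$ for each $i$; since the hypersurface $S$, the point $y_0$, and the non-characteristic condition are the same for all $i$ (the principal parts $\partial_t^2 - v_i^2\Delta_g$ share the same characteristic variety structure up to the positive factor $v_i^2$), I would check that one can in fact take a common $\phi = \phi_i$, $\kappa = \min_i \kappa_i$, $R = \min_i R_i$, etc., or else run the argument with $i$-dependent weights and reconcile them at the end. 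The cleanest route is to observe that $\psi$ being non-characteristic at $y_0$ with respect to one $P_i$ makes it non-characteristic with respect to all of them, and the construction of $\phi$ in \cite{Tataru1,Tataru2} can be carried out uniformly; this gives a single pseudoconvex $\phi$ valid for every $P_i$ simultaneously.

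The main step is the perturbation estimate for the term $\|e^{-\epsilon D_0^2/2\tau} e^{\tau \phi} L_i(Du,u)\|_0$. Since $L_i$ is a first-order operator with bounded, time-independent coefficients, $L_i(Du,u)$ is pointwise bounded by $C\|L\|_\infty (|Du| + |u|) = C\|L\|_\infty \sum_k(|Du_k|+|u_k|)$. The key observation is that the Fourier multiplier $e^{-\epsilon D_0^2/2\tau}$ acts only in the time variable and commutes with multiplication by the time-independent coefficients $L_{i;kl}, L_{i;k}$ as well as with the spatial derivatives $D_j$ ($j\ge 1$); only $D_0 = D_t$ needs care, but $D_0 e^{-\epsilon D_0^2/2\tau} e^{\tau\phi}$ can be handled because $e^{\tau\phi}$ and $D_0$ nearly commute up to a term involving $\tau\partial_t\phi$, which is absorbable. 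Thus
\begin{equation*}
\|e^{-\epsilon D_0^2/2\tau} e^{\tau \phi} L_i(Du,u)\|_0 \le C\|L\|_\infty \sum_{k=1}^m \|e^{-\epsilon D_0^2/2\tau} e^{\tau \phi} u_k\|_{1,\tau} + (\text{lower order in } \tau),
\end{equation*}
where I would make sure the constant $C$ does not blow up as $\tau\to\infty$ — the factor $\tau$ in the definition of $\|\cdot\|_{1,\tau}$ is exactly what lets $|u_k|$ and $\tau|u_k|$ be compared, and $|D_0 u_k|$ is already part of $\|u_k\|_{1,\tau}$. One has to verify that conjugating $L_i$ by $e^{-\epsilon D_0^2/2\tau}$ and $e^{\tau\phi}$ does not generate uncontrolled commutator terms; since $\phi$ is a quadratic polynomial, $\partial_t\phi$ is affine and multiplication by it against the Gaussian-damped, exponentially-weighted functions stays bounded by $C(1+\tau)$ in the relevant norms, which after the $\tau^{-1/2}$ prefactor contributes $O(\tau^{1/2})$ — hence this is the term that must be beaten by the left-hand side, not the other way around. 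I expect this commutator bookkeeping to be the main obstacle: getting it right requires being careful that the $\epsilon$-regularization and the Carleman weight interact benignly, which is plausible because both are time-only or polynomial and the paper has set up exactly the machinery (the explicit kernel $(\tau/2\pi\epsilon)^{1/2}e^{-\tau|t'-t|^2/2\epsilon}$) to control such manipulations.

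Finally I would sum the per-$i$ estimates over $i=1,\dots,m$. Writing $E := \sum_i \|e^{-\epsilon D_0^2/2\tau} e^{\tau \phi} u_i\|_{1,\tau}$, the above yields
\begin{equation*}
E \le C\tau^{-1/2}\|e^{-\epsilon D_0^2/2\tau} e^{\tau \phi} f\|_0 + C\|L\|_\infty \tau^{-1/2} E + Cm\, e^{-\tau\kappa^2/4\epsilon}\|e^{\tau\phi} u\|_{1,\tau} + C\tau^{-1/2}\cdot O(\tau^{1/2}) E',
\end{equation*}
where $E'$ collects the commutator remainder, itself bounded by $E$ plus controllable terms. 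Choosing $\tau_0$ large enough that $C\|L\|_\infty \tau^{-1/2} \le \tfrac12$ (and similarly for the commutator remainder, which after the $\tau^{-1/2}$ gain is $O(1)$ but with a small constant once one tracks it, or can be absorbed into the last error term by enlarging $\kappa$ slightly), I can move those terms to the left-hand side and obtain
\begin{equation*}
E \le C\tau^{-1/2}\|e^{-\epsilon D_0^2/2\tau} e^{\tau \phi} f\|_0 + C e^{-\tau\kappa^2/4\epsilon}\|e^{\tau\phi} u\|_{1,\tau},
\end{equation*}
which is exactly the claimed estimate (after renaming constants, noting $\|e^{-\epsilon D_0^2/2\tau} e^{\tau\phi} f\|_0 = \big(\sum_i \|e^{-\epsilon D_0^2/2\tau} e^{\tau\phi} f_i\|_0^2\big)^{1/2}$ is comparable to the sum, and similarly for $\|e^{\tau\phi} u\|_{1,\tau}$). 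The support condition $\supp(u)\subset B_R(y_0)$ is inherited directly from Theorem \ref{th_carleman} and is used to justify applying the scalar estimate to each $u_i$.
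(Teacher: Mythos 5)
Your proposal is correct and follows essentially the same route as the paper: apply the scalar Tataru estimate to each component $u_i$, commute $e^{-\epsilon D_0^2/2\tau}$ through $L_i(Du,u)$ using that it is a time-only Fourier multiplier, that the coefficients of $L_i$ are time-independent, and that $\phi$ is a quadratic polynomial (so the commutator with $\tau D\phi$ is a bounded correction), then sum over $i$ and absorb for $\tau$ large. One point you could tighten: the worrisome term $\tau\|\phi\|_{C^1}\,\|e^{-\epsilon D_0^2/2\tau}e^{\tau\phi}u_k\|_0$ is at most $\|\phi\|_{C^1}\,\|e^{-\epsilon D_0^2/2\tau}e^{\tau\phi}u_k\|_{1,\tau}$ by the very definition of the weighted norm, so after the Carleman prefactor $\tau^{-1/2}$ the entire commutator contribution is $O(\tau^{-1/2})$ times the quantity on the left-hand side and absorbs cleanly by choosing $\tau_0$ large; there is no need to track a hidden small constant nor to enlarge $\kappa$ (which is fixed by the local geometry around $y_0$ and cannot be enlarged).
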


\begin{proof}
We apply Theorem \ref{th_carleman} to each component $u_i$ with the hyperbolic operator $P_i$,
\begin{eqnarray*}
 \|e^{-\epsilon D_0^2/2\tau} e^{\tau \phi} u_i\|_{1,\tau} \le C \, \tau^{-1/2}\|e^{-\epsilon D_0^2/2\tau} e^{\tau \phi} P_i u_i\|_{0} + C \, e^{-\tau \kappa^2/4\epsilon} \|e^{\tau \phi} u_i\|_{1,\tau}.
%&=& c  \tau^{-1/2}\|e^{-\epsilon D_0^2/2\tau} e^{\tau \phi} \big(f_i-A_i(Du,u) \big)\|_{0} + c e^{-\tau \kappa^2/4\epsilon} \|e^{\tau \phi} u_i\|_{1,\tau}.
\end{eqnarray*}
We only need to estimate the first term on the right-hand side,
\begin{eqnarray*}
\|e^{-\epsilon D_0^2/2\tau} e^{\tau \phi} P_i u_i\|_{0} &\leq& \big\|e^{-\epsilon D_0^2/2\tau} e^{\tau \phi} \big(f_i-L_i(Du,u)\big) \big\|_{0} \\
&\le& \|e^{-\epsilon D_0^2/2\tau} e^{\tau \phi} f_i\|_{0} + \|L\|_{\infty}\sum_{k=1}^m  \|e^{-\epsilon D_0^2/2\tau} e^{\tau \phi} u_k\|_{0} \\
&& + \|L\|_{\infty}\sum_{k=1}^m  \|e^{-\epsilon D_0^2/2\tau} e^{\tau \phi} D u_k\|_{0}.
\end{eqnarray*}

For sufficiently large $\tau$, the second term on the right can be absorbed into the left-hand side when we sum over $i$. It suffices to estimate the last term,
\begin{eqnarray*}
\|e^{-\epsilon D_0^2/2\tau} e^{\tau \phi} D_l u_k\|_{0} &\leq& \|D_l \Big( e^{-\epsilon D_0^2/2\tau} e^{\tau \phi} u_k\Big)\|_{0}+ \|e^{-\epsilon D_0^2/2\tau} \big(D_l e^{\tau \phi}\big) u_k\|_{0} \\
&\leq& \| e^{-\epsilon D_0^2/2\tau} e^{\tau \phi}  u_k\|_{1}+  \|e^{-\epsilon D_0^2/2\tau} (\tau D_l \phi) e^{\tau \phi} u_k\|_{0},
\end{eqnarray*}
where we have used the fact that $[D_l, e^{-\epsilon D_0^2/2\tau}]=0$ for all $l=0,1,\cdots,n$.
%For $D_0$, commute on functions with compact support.
Since $\phi$ is a quadratic polynomial, we know (see (2.303) in \cite{KKL})
$$[e^{-\epsilon D_0^2/2\tau},\tau D\phi(y)]=\epsilon \, {\rm Hess}_{\phi}(0)(D_0,0,\cdots,0) e^{-\epsilon D_0^2/2\tau}, $$
which gives 
\begin{eqnarray*}
\|e^{-\epsilon D_0^2/2\tau} e^{\tau \phi} D_l u_k \|_{0} &\le& \| e^{-\epsilon D_0^2/2\tau} e^{\tau \phi} u_k\|_{1}+  \tau \|\phi\|_{C^1} \|e^{-\epsilon D_0^2/2\tau}  e^{\tau \phi} u_k\|_{0} \\
&& +  \epsilon \|\phi\|_{C^2} \| e^{-\epsilon D_0^2/2\tau} e^{\tau \phi} u_k\|_{0}.
\end{eqnarray*}

Combining the estimates above and summing the inequalities over $i=1,\cdots,m$, for sufficiently large $\tau$ (and $\epsilon<1$), one can absorb all unwanted terms on the right-hand side into the left-hand side. The proposition is proved.
\end{proof}

Next, we use Proposition \ref{Carleman-system} to derive a local stability estimate for the lower temporal frequencies for the hyperbolic system \eqref{eq_system}, similar to Theorem 1.1 in \cite{BKL1}.

\begin{figure}[h]
  \begin{center}
    \includegraphics[width=0.6\linewidth]{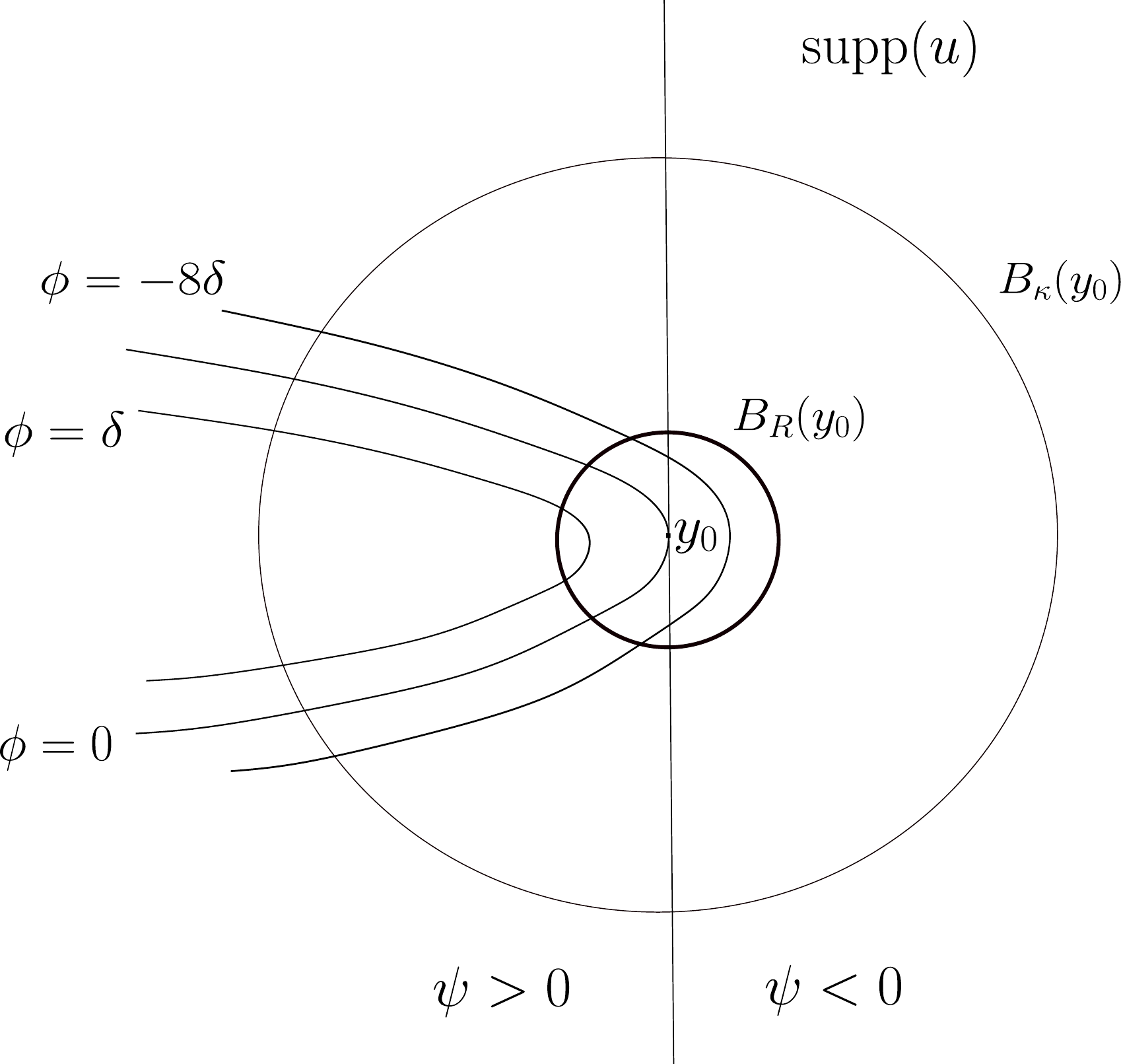}
    \caption{Setting of Lemma \ref{lemma_exp1}.}
    \label{fig_local}
  \end{center}
\end{figure}

\begin{notation*}
Let $b\in C_0^{\infty}(\R^{n+1})$, $0\leq b(\xi) \leq 1$ be supported in $|\xi|\leq 2$ and equal to 1 in $|\xi|\leq 1$.
Let $A(D_0)$ be a pseudo-differential operator with symbol $a\in C_0^{\infty}(\R)$, $0\leq a \leq 1$, where $a$ is supported in $[-2,2]$ and equal to 1 in $[-1,1]$. 

Let $\psi,\phi,y_0,\kappa,R$, ($R<\kappa/2$) be as stated in Theorem \ref{th_carleman}.
As Proposition 2.5 in \cite{BKL1}, one can choose $\delta>0$ sufficiently small such that (see Figure \ref{fig_local})
\begin{equation}\label{phi-condition-delta}
\{y: \psi(y)\leq 0\} \cap \{y\in B_{\kappa}(y_0): \phi(y) \geq -8\delta\} \subset B_R(y_0).
\end{equation}
Let $\chi\in C_0^{\infty}(\R)$, $0\leq \chi\leq 1$ be a localizer be supported in $[-8\delta,\delta]$ and equal to 1 in $[-7\delta,\delta/2]$. In particular, we choose the functions $b,a,\chi$ from the Gevrey functions of class $1/\alpha$ for a fixed $\alpha\in (0,1)$.
\end{notation*}

\begin{lemma}\label{lemma_exp1}
Let $\Omega$ be an open subset of $\R\times\R^n$.
Let $S=\{y\in \Omega:\psi(y)=0\}$ be an oriented $C^{2,\rho}$ hypersurface which is non-characteristic in $\Omega$, and $y_0\in S$, $\psi'(y_0)\neq 0$. 
Let $b\in C_0^{\infty}(\R^{n+1})$ be a Gevrey localizer of class $1/\alpha$ for a fixed $\alpha\in (0,1)$ as defined above.
Suppose $u=(u_1,\cdots,u_m),\, u_i\in H^1(\Omega)$ is a solution of the hyperbolic system \eqref{eq_system} with $f_i\in L^2(\Omega)$.
Assume $u$ satisfies 
\begin{equation}\label{psi-support}
\supp(u) \subset \{y: \psi(y)\leq 0\} \cap \Omega.
\end{equation}
Then there exist constants $R,r,c_0,c_1>0$ such that the following holds.

For $\mu \geq 1$, if for some constant $C_0>0$,
\begin{eqnarray*}
\|u\|_{H^1(B_{2R}(y_0))} \leq C_0,\quad \|f\|_{L^2(B_{2R}(y_0))} \leq C_0, \quad \big\|A(\frac{D_0}{\mu})b(\frac{y-y_0}{R})f \big\|_{L^2} \leq C_0 \exp(-\mu^\alpha),
\end{eqnarray*}
then there exists a constant $C_1>0$ independent of $\mu$ such that
%$C_1$ depends on $C_0$.
\begin{eqnarray*}
\big\|A(\frac{D_0}{\omega})b(\frac{y-y_0}{r})u \big\|_{H^1} \le C_1 \exp(-c_1\mu^{\alpha^2}),\quad \forall\, \omega \leq \mu^\alpha/c_0.
\end{eqnarray*}
Here $c_0,c_1$ are independent of $\mu,C_0$.
\end{lemma}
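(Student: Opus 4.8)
The plan is to run the standard Tataru-type argument: combine the frequency-localized Carleman estimate from Proposition~\ref{Carleman-system} with the Gevrey cutoff machinery in order to convert smallness of $f$ at low temporal frequencies into smallness of $u$ at low temporal frequencies, localized near $y_0$. The function $v := b\big(\tfrac{y-y_0}{R}\big) u$ is compactly supported in $B_{2R}(y_0)$, hence its support lies in $B_\kappa(y_0)$ once $R<\kappa/2$, so Proposition~\ref{Carleman-system} applies to $v$ (with an extra commutator term $P_i v - b P_i u_i = [P_i,b]u_i$ supported in the annulus $R\le |y-y_0|\le 2R$, which by the support condition \eqref{psi-support} and the geometry \eqref{phi-condition-delta} sits in the region where $\phi < -8\delta$, so $e^{\tau\phi}$ gains a factor $e^{-8\delta\tau}$ there). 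Applying the Carleman estimate to $\chi(\phi(y)/1)v$ — or more precisely to the localizer $\chi$ composed with $\phi$, which equals $1$ where $\phi\ge -7\delta$ and is supported where $\phi \ge -8\delta$ — and then restricting the left side to the region $\phi \ge -7\delta \cap B_r(y_0)$ (which contains a fixed neighborhood of $y_0$), one obtains, schematically,
\begin{equation*}
\|e^{-\epsilon D_0^2/2\tau} e^{\tau\phi} \chi(\phi) v\|_{1,\tau} \le C\tau^{-1/2}\|e^{-\epsilon D_0^2/2\tau}e^{\tau\phi}\chi(\phi) f\|_0 + C e^{-\delta\tau}\|u\|_{H^1(B_{2R})} + C e^{-\tau\kappa^2/4\epsilon}\|u\|_{H^1}.
\end{equation*}

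The next step is to insert the frequency cutoff $A(D_0/\mu)$. One decomposes $f = A(\tfrac{D_0}{\mu})f + (1-A(\tfrac{D_0}{\mu}))f$. On the low-frequency piece, the hypothesis gives $\|A(\tfrac{D_0}{\mu})b(\tfrac{y-y_0}{R})f\|_{L^2}\le C_0 e^{-\mu^\alpha}$, and multiplying by $e^{-\epsilon D_0^2/2\tau}e^{\tau\phi}$ — whose $L^2\to L^2$ norm on this frequency-localized, spatially bounded region is controlled by $e^{C\tau}$ — keeps this term of size $C e^{C\tau - \mu^\alpha}$. On the high-frequency piece $(1-A(\tfrac{D_0}{\mu}))f$, supported in $|\xi_0|\ge\mu$, the smoothing operator $e^{-\epsilon D_0^2/2\tau}$ contributes $e^{-\epsilon\mu^2/2\tau}$, and together with the $e^{\tau\phi}\sim e^{C\tau}$ bound and $\|f\|_{L^2}\le C_0$ this term is $Ce^{C\tau - \epsilon\mu^2/2\tau}$. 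Choosing $\tau$ comparable to $\mu^{\alpha^2}$ (so that $\epsilon\mu^2/2\tau \gg \tau$ and $\mu^\alpha \gg \tau$, both of which hold for this choice since $\alpha<1$) makes every error term on the right bounded by $C_1 e^{-c_1\mu^{\alpha^2}}$. Finally one removes the weights on the left: on $B_r(y_0)\cap\{\phi\ge -7\delta\}$ one has $e^{\tau\phi}\ge e^{-7\delta\tau}$, and for $\omega\le\mu^\alpha/c_0 \sim \tau/c_0'$ the operator $A(\tfrac{D_0}{\omega})$ composed with $(e^{-\epsilon D_0^2/2\tau})^{-1}e^{-\tau\phi}$ — restricted to frequencies $|\xi_0|\le 2\omega$ — has norm at most $e^{C\tau}$, which is reabsorbed into the exponential rate with the constants adjusted; this yields the claimed bound on $\|A(\tfrac{D_0}{\omega})b(\tfrac{y-y_0}{r})u\|_{H^1}$.

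I expect the main obstacle to be the careful bookkeeping of the $\tau$-dependence through the weights $e^{\tau\phi}$ and the inverse smoothing operator $(e^{-\epsilon D_0^2/2\tau})^{-1}$, so that the final balance $\tau\sim\mu^{\alpha^2}$ genuinely beats all the competing exponentials. The delicate point is that $e^{\tau\phi}$ is $\le e^{C\tau}$ on the bounded support and $\ge e^{-C\tau}$ on the target neighborhood, so the quotient of the ``good'' lower bound against the ``bad'' upper bounds costs $e^{C\tau}$; this forces the low-frequency error $e^{-\mu^\alpha}$ and the high-frequency error $e^{-\epsilon\mu^2/2\tau}$ each to dominate $e^{C\tau}$, which is exactly why one loses a power and lands at rate $\mu^{\alpha^2}$ rather than $\mu^\alpha$. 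A secondary technical nuisance is justifying that $A(\tfrac{D_0}{\omega})$ and $b(\tfrac{y-y_0}{r})$ commute with the smoothing operator and the weight up to acceptable errors — here the Gevrey class $1/\alpha$ regularity of the cutoffs $a,b,\chi$ is used to control the commutator tails, exactly as in Proposition~2.5 and Theorem~1.1 of \cite{BKL1}, and one invokes those lemmas essentially verbatim after the componentwise reduction already carried out in Proposition~\ref{Carleman-system}.
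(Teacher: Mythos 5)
Your first half — applying Proposition~\ref{Carleman-system} to $\chi(\phi)\,b(\tfrac{y-y_0}{R})u$, exploiting the support geometry \eqref{phi-condition-delta} to make the commutator terms carry the gain $e^{-8\delta\tau}$, and controlling the $f$-term via the hypothesis on $A(\tfrac{D_0}{\mu})b_0 f$ — matches the paper's derivation of the intermediate estimate \eqref{Carleman-local-final}. But the second half, "remove the weights on the left," is where the proposal breaks down, and this is precisely where the paper does something genuinely different.

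You propose to pass from $\|e^{-\epsilon D_0^2/2\tau}e^{\tau\phi}\chi(\phi)b_0 u\|_{1,\tau}$ to $\|A(\tfrac{D_0}{\omega})b(\tfrac{y-y_0}{r})u\|_{H^1}$ by composing with $(e^{-\epsilon D_0^2/2\tau})^{-1}e^{-\tau\phi}$, claiming the cost is only $e^{C\tau}$ "restricted to frequencies $|\xi_0|\le 2\omega$." There are two problems. First, the assertion $\omega \le \mu^\alpha/c_0 \sim \tau/c'_0$ is inconsistent with your choice $\tau\sim\mu^{\alpha^2}$: since $\alpha<1$, $\mu^\alpha \gg \mu^{\alpha^2}$, so the admissible $\omega$ can be far larger than $\tau$. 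Second, even on $|\xi_0|\le 2\omega$ the inverse Gaussian has symbol $e^{\epsilon\xi_0^2/2\tau}$, costing $e^{2\epsilon\omega^2/\tau}\sim e^{\mu^{\alpha(2-\alpha)}}$; since $\alpha(2-\alpha)>\alpha>\alpha^2$, this dominates every gain ($e^{-c\mu^\alpha}$, $e^{-8\delta\tau}$, $e^{-c\mu^{\alpha^2}}$) in the Carleman estimate, so nothing is left. Beyond the scaling, the composition is not even legitimate: $e^{-\tau\phi}$ depends on $t$, so it spreads frequencies, and $(e^{-\epsilon D_0^2/2\tau})^{-1}$ is unbounded on the image; truncating the \emph{input} to $|\xi_0|\le 2\omega$ does not help.

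The paper (following \cite{BKL1}, which the lemma is modeled on) avoids this by an entirely different mechanism: it views $N(-iz)=\|e^{-\epsilon D_0^2/2\tau}e^{\tau\phi}\chi(\phi)b_0 u\|^2_{H^1}$ (with $z=i\tau$) as a function of complex $z$, extends \eqref{Carleman-local-final} to the sector $\mathcal R(\mu)$ of the upper half-plane via Phragmen--Lindel\"of, and then writes the low-frequency localization $A(\tfrac{\beta D_0}{\mu^\alpha})(\eta(\phi)u)$ as a contour integral $\int \bar{\widehat\eta}(\bar z)A(\tfrac{\beta D_0}{\mu^\alpha})e^{-iz\phi}\chi(\phi)u\,dz$, deforming $\R$ into $\Gamma_1\cup\Gamma_2\subset\mathcal R(\mu)$. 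The Gevrey decay of $\widehat\eta$ controls $\Gamma_1$, and the holomorphically extended Carleman bound controls $\Gamma_2$, producing the $e^{-c\mu^{\alpha^2}}$ rate. This contour/holomorphy argument is the key idea missing from your proposal; it is not bookkeeping, and it cannot be replaced by inverting the Gaussian smoothing. A secondary imprecision is your high-frequency estimate on $f$: because $e^{\tau\phi}\chi(\phi)b_0$ is $t$-dependent, the Gaussian does not deliver $e^{-\epsilon\mu^2/2\tau}$ after the multiplication; the correct rate is the Gevrey tail $e^{-c\mu^\alpha}$ as in \cite[Lemma~2.6]{BKL1}, though this happens not to change the end result once $\tau\sim\mu^{\alpha^2}$.
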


\begin{proof}
We follow the proof of Theorem 1.1 in \cite{BKL1}.
Denote 
\begin{equation}\label{def-b0}
b_0 :=b(\frac{y-y_0}{R}).
\end{equation}
Consider the functions 
\begin{equation}
\overline{u}_i:=\chi(\phi) b_0 u_i, \quad \overline{u}=(\overline{u}_1,\cdots,\overline{u}_m).
\end{equation}
where $\phi$ is the quadratic polynomial determined in Proposition \ref{Carleman-system}. The function $\overline{u}$ is supported in $B_{2R}(y_0)$, and satisfies
\begin{eqnarray*}
P_i \overline{u}_i &=& \chi(\phi) b_0 P_i u_i + [P_i, \chi(\phi) b_0] u_i \\
&=& \chi(\phi) b_0 f_i - \chi(\phi) b_0 L_i(Du,u) + [P_i, \chi(\phi) b_0] u_i.
\end{eqnarray*}
Since $L_i$ is a linear operator, by \eqref{def-Li},
\begin{eqnarray*}
L_i ( D \overline{u}, \overline{u} ) = \chi(\phi) b_0 L_i(Du,u) + \sum_{k=1}^m \sum_{l=0}^n L_{i; kl} D_l \big(\chi(\phi) b_0 \big) u_k.
\end{eqnarray*}
Hence $\overline{u}$ satisfies the equations
\begin{equation} \label{eq-local-u}
P_i \overline{u}_i + L_i ( D \overline{u}, \overline{u} ) = \chi(\phi) b_0 f_i + [P_i, \chi(\phi) b_0] u_i+ \sum_{k=1}^m \sum_{l=0}^n L_{i; kl} D_l \big(\chi(\phi) b_0 \big) u_k.
\end{equation}
Then we apply Proposition \ref{Carleman-system} to $\overline{u}=\chi(\phi) b_0 u_i$ for the hyperbolic system \eqref{eq-local-u},
\begin{eqnarray} \label{Carleman-local}
C^{-1} \, \tau^{1/2} \|e^{-\epsilon D_0^2/2\tau} e^{\tau \phi} \overline{u}\|_{1,\tau} \leq \|e^{-\epsilon D_0^2/2\tau} e^{\tau \phi} \chi(\phi) b_0 f\|_{0} + \sum_i \big\|e^{-\epsilon D_0^2/2\tau} e^{\tau \phi} [P_i, \chi(\phi) b_0] u_i \big\|_{0} \nonumber \\
+ \|L\|_{\infty} \sum_{k,l} \big\|e^{-\epsilon D_0^2/2\tau} e^{\tau \phi} D_l \big(\chi(\phi) b_0 \big) u_k \big\|_{0}
+ \tau^{1/2} e^{-\tau \kappa^2/4\epsilon} \|e^{\tau \phi} \overline{u} \|_{1,\tau}.
\end{eqnarray}

By definition \eqref{def-b0}, we see that $\supp(b_0)\subset B_{2R}(y_0) \subset B_{\kappa}(y_0)$, in view of $R<\kappa/2$.
Thus the conditions \eqref{phi-condition-delta} and \eqref{psi-support} imply 
\begin{equation} \label{support-in-R}
\supp(u)\cap \supp(\chi (\phi)) \cap B_{2R}(y_0)\subset B_R(y_0).
\end{equation}
Since $b_0=1$ in $B_R(y_0)$, we have $[P_i,\chi(\phi) b_0] u_i=[P_i,\chi(\phi)] u_i$ in $B_{2R}(y_0)$, and the following holds everywhere:
\begin{equation} \label{commutator-b0}
[P_i,\chi(\phi) b_0] u_i=[P_i,\chi(\phi)]b_0 u_i.
\end{equation}
Moreover, the conditions \eqref{phi-support-condition} and \eqref{psi-support} imply
\begin{equation} \label{support-in-delta}
\supp(\chi'(\phi))\cap \supp(b_0 u) \subset \{y: -8\delta \leq \phi(y)\leq -7\delta\},
\end{equation}
where $\chi'(\phi)$ denotes the derivative of $\chi(\phi)$.
%since the support of $D(b_0 u_i)$ is contained in the support of $b_0 u_i$.

The second term on the right-hand side of \eqref{Carleman-local} can be estimated using \eqref{commutator-b0} and \eqref{support-in-delta} as follows,
\begin{eqnarray} \label{Carleman-local-1}
\big\|e^{-\epsilon D_0^2/2\tau} e^{\tau \phi} [P_i,\chi(\phi) b_0] u_i \big\|_{0} &=& \big\|e^{-\epsilon D_0^2/2\tau} e^{\tau \phi} [P_i,\chi(\phi)] b_0 u_i \big\|_{0} \nonumber \\
&\leq& C e^{-7\tau\delta} \|u_i\|_{H^1(B_{2R}(y_0))}.
\end{eqnarray}
%the constant is actually is $C\delta^{-2} R^{-1}$.
We note that the constant $C$ here also depends on $\delta,R$.
For the third term on the right-hand side of \eqref{Carleman-local}, notice that $\chi(\phi)(D_l b_0) u_k=0$ due to \eqref{support-in-R}. Hence by \eqref{support-in-delta},
\begin{eqnarray}
\big\|e^{-\epsilon D_0^2/2\tau} e^{\tau \phi} D \big(\chi(\phi) b_0 \big) u_i \big\|_{0} &=& \|e^{-\epsilon D_0^2/2\tau} e^{\tau \phi} \chi'(\phi) b_0  u_i\|_{0}  \nonumber \\
&\leq& C e^{-7\tau\delta} \|u_i\|_{L^2(B_{2R}(y_0))}.
\end{eqnarray}
%the constant is actually is $C\delta^{-1}$.
For the fourth term on the right, note that the parameter $\delta$ can be chosen such that $8\delta< \kappa^2/4\epsilon$. Since $\phi\leq \delta$ in the support of $\chi(\phi)$, the last term is bounded by 
\begin{equation}
\tau^{1/2} e^{-\tau \kappa^2/4\epsilon} \|e^{\tau \phi} \overline{u} \|_{1,\tau} \leq \tau^{3/2} e^{-8\tau\delta} e^{\tau\delta} \|\overline{u}\|_{H^1} \leq C e^{-6\tau\delta}.
\end{equation}
%the constant is actually is $C\delta^{-1} R^{-1}$.
The first term on the right-hand side of \eqref{Carleman-local} can be estimated by repeating the proof of Lemma 2.6 in \cite{BKL1}.
This show that there is $c > 0$ such that 
\begin{equation} \label{Carleman-local-4}
\|e^{-\epsilon D_0^2/2\tau} e^{\tau \phi} \chi(\phi) b_0 f\|_{0} \leq Ce^{2\tau\delta-c\mu^{\alpha}},
\end{equation}
Note that here we replaced $f$ with $b_0 f$
using \eqref{support-in-R} and the fact that $\supp(f)\subset \supp(u)$.
Combining the estimates \eqref{Carleman-local-1}-\eqref{Carleman-local-4}, we obtain
\begin{equation} \label{Carleman-local-final}
\tau^{1/2} \|e^{-\epsilon D_0^2/2\tau} e^{\tau \phi} \chi(\phi) b_0 u\|_{1,\tau} \leq Ce^{2\tau\delta} (e^{-c\mu^{\alpha}}+e^{-8\tau\delta}),
\end{equation}
for sufficiently large $\tau>\tau_0$.

\smallskip
The estimate \eqref{Carleman-local-final} has the same form as (2.7) in \cite{BKL1}. Then one can follow the rest of the proof there. Here, we sketch the outline of the proof. The first part is to extend the estimate \eqref{Carleman-local-final} to the upper complex plane. More precisely, consider 
\begin{equation}
N(\tau):= \|e^{-\epsilon D_0^2/2\tau} e^{\tau \phi} \chi(\phi) b_0 u\|_{H^1}^2,\quad \tau\in \mathbb{C}.
\end{equation}
One needs to show that there is $\tilde c > 0$ so that in the region 
    \begin{align*}
\mathcal R(\mu) := \{z \in \C : |z|\leq \tilde c\mu^{\alpha},\ \textrm{Im}z\geq 0\},
    \end{align*} 
the following holds:
\begin{equation} \label{estimate-Nz}
N(-iz)\leq C(1+|z|^2)e^{-10\delta \textrm{Im}z}.
\end{equation}
There is $\tilde c > 0$ such that
the inequality \eqref{estimate-Nz} is true for $z=i\tau \in \mathcal R(\mu)$ when $\tau>\tau_0$, as follows from \eqref{Carleman-local-final}. The estimate can be immediately extended to $z=i\tau \in \mathcal R(\mu)$, $0 \leq \tau\leq\tau_0$, as $\tau_0$ can be put into the constant $C$. To extend the estimate to the whole upper complex plane, one needs a complex analysis argument using the Phragmen-Lindel\"of principle, see Lemma 2.7 in \cite{BKL1}. 

The second part is to estimate 
\begin{equation}
F(y):= A(\frac{\beta D_0}{\mu^{\alpha}}) \big( \eta(\phi)u \big)(y),
\end{equation}
with $\beta>0$ to be determined.
Here $\eta(s):=\eta_1(s/\delta)$, where $\eta_1$ of Gevrey class $1/\alpha$ is a localizer supported in $[-4,1]$ and equal to 1 in $[-3,1/2]$. By our construction, $\chi(\phi)=1$ on $\supp\big(\eta(\phi)u \big)$, and hence $\eta(\phi)u=\eta(\phi) \chi(\phi)u$. The function $F$ can be written as an integral over $\R$:
\begin{equation}
F(y)=\int_{\mathbb{R}} \bar{\widehat{\eta}}(\bar{z})\Big( A(\frac{\beta D_0}{\mu^{\alpha}}) e^{-iz\phi} \chi(\phi) u \Big)(y) dz,
\end{equation} 
where $\widehat{\eta}$ denotes the Fourier transform of $\eta$.
Then we change the integral over $\mathbb{R}$ to a contour integral in the complex plane
    \begin{align*}
F = I_1 + I_2, \quad I_j = \int_{\Gamma_j} \bar{\widehat{\eta}}(\bar{z}) A(\frac{\beta D_0}{\mu^{\alpha}}) e^{-iz\phi} \chi(\phi) u\, dz, \quad j=1,2,
    \end{align*}
where, writing $\ell = \frac{1}{\sqrt 2} \tilde c \mu^\alpha$,
$\Gamma_1 = \{ z \in \R : |z| \geq \ell \}$ and 
    \begin{align*}
\Gamma_2 = \{ z \in \C : \textrm{Re} z = \pm \ell,\ 0 \leq \textrm{Im} z \leq \ell \} \cup \{ z \in \C : |\textrm{Re} z| \leq \ell,\ \textrm{Im} z = \ell\}.
    \end{align*}
Note that $\Gamma_2 \subset \mathcal R(\mu)$.
As $\eta_1$ is of Gevrey class $1/\alpha$ and supported in $[-4,1]$, there holds
    \begin{align}\label{eta_gevrey}
|\bar{\widehat{\eta}}(\bar{z})| 
\leq 
C e^{4 \delta \textrm{Im} z - c |\textrm{Re} z|^\alpha}
    \end{align}
for some $c > 0$. It follows that 
$\|I_1\|_{H^1} \le C e^{-c \mu^{\alpha^2}}$
for some $c > 0$. Moreover, 
    \begin{align*}
\|I_2\|_{H^1}
\leq 
C \int_{\Gamma_2} e^{4 \delta \textrm{Im} z - c |\textrm{Re} z|^\alpha}
\|A(\frac{\beta D_0}{\mu^{\alpha}}) e^{-\epsilon D_0^2/2i z} \|_{B(H^1)} 
\| e^{-\epsilon D_0^2/(-2i z)} e^{-iz\phi} \chi(\phi) u \|_{H^1} |dz|,
    \end{align*}
where $B(H^1)$ is the space of bounded operators on $H^1$.
Now we apply (\ref{estimate-Nz}) together with the estimate
    \begin{align*}
\|A(\frac{\beta D_0}{\mu^{\alpha}}) e^{-\epsilon D_0^2/2i z} \|_{B(H^1)} \leq e^{c \textrm{Im} z/ \beta^2},
    \end{align*}
that holds for some $c > 0$. This yields
    \begin{align*}
\|I_2\|_{H^1}
\leq
C \int_{\Gamma_2} 
e^{4 \delta \textrm{Im} z - c |\textrm{Re} z|^\alpha
+ c \textrm{Im} z/ \beta^2 - 5 \delta \textrm{Im} z}
|dz|
\leq 
C e^{-c \mu^{\alpha^2}}
    \end{align*}
for some $c > 0$ and large enough $\beta > 0$.

By combining the above estimates for $I_1$ and $I_2$ 
we get $\|F\|_{H^1} \le C e^{-c \mu^{\alpha^2}}$
for large $\beta$, which is very close to the claimed estimate.
The final step of the proof is to replace the cut off function $\eta(\phi)$
with the cut off function $b((y-y_0)/r)$.
We omit the details of the short proof and refer to \cite{BKL1},
see the end of the proof of Theorem 1.1 there.
\end{proof}

\subsection{Global estimates.}
Propagating the local estimate Lemma \ref{lemma_exp1} yields a global estimate.
To do this, we use the same constructions as Assumption A4 in \cite{BKL2}. 

\begin{assumption}\label{assumption1}
Let $\Omega$ be a bounded connected open subset of $\R\times\R^n$ and $u=(u_1,\cdots,u_m)$, $u_i \in H^1(\Omega)$.
Assume that there is a function $\psi \in C^{2,\rho}(\Omega)$ for some  $\rho\in(0,1]$,  such that in an open set $\Omega_0 \subset \Omega$ one has $\psi'(y)\neq 0$ and $p_i(y,\psi'(y))\neq 0$ for all $i$ and all $y\in \Omega_0$, where 
$p_i(y,\xi)= \xi_0^2 - v_i(x)^2 \sum_{j,k} g^{jk}(x) \xi_j\xi_k$ is the principal symbol of the hyperbolic operator $P_i$ in \eqref{def-wave}.
%Consider now $\Omega_a$ an open set with closure $\overline{\Omega_a}$ contained in $\Omega_0$.

Assume that there exist values $\psi_{min} < \psi_{max}$ and a connected nonempty set $\Upsilon \subset \Omega_0$ such that:
$\supp(u)\cap \Upsilon = \emptyset$,
%$(\supp(u)\cap \Omega_0) \subset \{y; \psi(y) \le  \psi_{max}\}$;\quad
and $\emptyset \neq  \{y\in \Omega_0: \psi(y) >  \psi_{max}\} \subset \Upsilon$.
%(which implies that $\Omega_0$ contains a subset $\Upsilon$ where $u$ vanishes, and that the value $\psi_{max}$
%is obtained for points inside the domain $\Omega_0$).\\
Assume that $\psi_{min}$ is such that the open set 
$\Omega_a = \{y \in \Omega_0 - \overline{\Upsilon}: \psi_{min} < \psi(y) < \psi_{max} \} $ is nonempty, connected and satisfies
$\dist(\partial \Omega_0, \Omega_a) >0$.
\end{assumption}

\begin{construction*}
Let $R>r>0$ satisfying $2R<\dist(\partial \Omega_0, \Omega_a)$. 
Under Assumption \ref{assumption1}, we choose a maximal $r/2$-separated set in $\overline{\Omega}_a$ as follows. Take $y_1$ to be a point where $\psi$ achieves maximum in $\overline{\Omega}_a$, and $y_2$ to be a point where $\psi$ achieves maximum in $\overline{\Omega}_a-B_{r/2}(y_1)$. In general, let $y_j\in \overline{\Omega}_a$ be a point where $\psi$ achieves maximum in $\overline{\Omega}_a-\cup_{l=1}^{j-1} B_{r/2}(y_l)$.
Observe that $y_1\in \partial \Upsilon$ and $y_j$ is on the boundary of $\cup_{l=1}^{j-1} B_{r/2}(y_l) \cup \Upsilon$, since $\psi$ has no critical point in $\Omega_0$ by assumption. See Figure \ref{fig_iteration}.
Repeat the procedure until it stops, and we get a maximal $r/2$-separated set $\{y_j\}_{j=1}^N$, also an $r/2$-net, in $\overline{\Omega}_a$. Since $\{y_j\}_{j=1}^N$ is $r/2$-separated, the total number $N$ of points is bounded by
\begin{equation} \label{bound-N}
N\leq C(n)\, {\rm vol}(\Omega_0) r^{-(n+1)}.
\end{equation}

Denote $u_{k,1}=u_k$ for $k=1,\cdots,m$, and define
\begin{equation} \label{def-uij}
u_{k,j+1}:=(1-b_j)u_{k,j},\quad  b_j:=b \big(\frac{2(y-y_j)}{r} \big), \quad j\geq 1,
\end{equation}
where $b\in C_0^{\infty}(\R^{n+1})$, $0\leq b(\xi) \leq 1$ is a localizer supported in $|\xi|\leq 2$ and equal to 1 in $|\xi|\leq 1$.
We write $u_{\cdot,j}:=(u_{1,j},\cdots,u_{m,j})$.
The requirement $2R<\dist(\partial \Omega_0, \Omega_a)$ yields $\cup_{j=1}^N B_{2R}(y_j)\subset \Omega_0$. 
\end{construction*}

\begin{figure}[h]
  \begin{center}
    \includegraphics[width=0.6\linewidth]{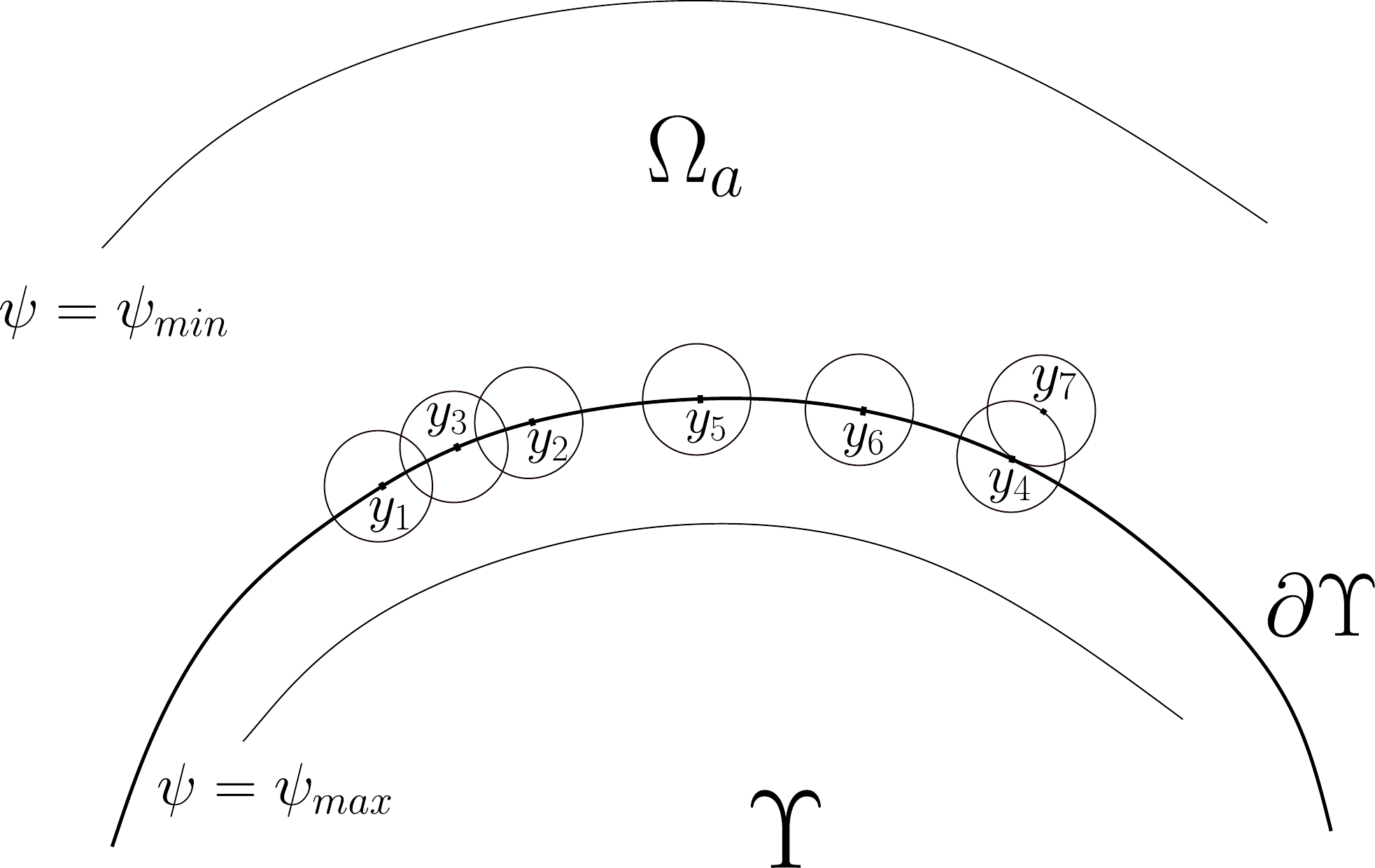}
    \caption{Choice of points $y_j$.}
    \label{fig_iteration}
  \end{center}
\end{figure}

The next lemma is an iteration of Lemma \ref{lemma_exp1}, analogous to Theorem 2.7 in \cite{BKL2}.

\begin{lemma}\label{iteration}
Under Assumption \ref{assumption1}, the points $y_j$ and functions $u_{k,j}$ $(j=1,\cdots,N)$ are defined as above.
Let $b\in C_0^{\infty}(\R^{n+1})$ be a Gevrey localizer of class $1/\alpha$ for a fixed $\alpha\in (0,1)$, as defined in Lemma \ref{lemma_exp1}.
Suppose $u=(u_1,\cdots,u_m),\, u_i\in H^1(\Omega)$ is a solution of the hyperbolic system \eqref{eq_system} with $f_i\in L^2(\Omega)$.
Then there exist constants $r,c_0,c_1,c_2,c_3,C_j$
%$R \ge 2r >0$, $c_2>1$
such that the following holds. 

If for some $\mu>c_2$,
\begin{eqnarray*}\label{ipo1}
\|u\|_{H^1(\Omega_0)}= 1 ,\quad \|f\|_{L^2(\Omega_0)} \le  \exp(-\mu^{\alpha}),
\end{eqnarray*}
then we can find  $\mu_1=\mu$, $\mu_{j+1} = \mu_{j}^{\alpha}/c_3$, $\mu_j > 1$, such that $\|u_{k,j}\|_{H^1(\Omega_0)} \leq C(N,r)$ for $k=1,\cdots,m$, and
\begin{eqnarray*}\label{ipo4}
\big\| A(\frac{D_0}{\omega})b(\frac{y-y_j}{r})u_{k,j} \big\|_{H^1} \le C_j\exp({-c_1 \mu_j^{\alpha^2}}),\quad \forall\, \omega \le \mu_j^\alpha/c_0.
\end{eqnarray*}
\end{lemma}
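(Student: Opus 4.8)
The plan is to prove Lemma~\ref{iteration} by induction on $j$, using Lemma~\ref{lemma_exp1} to pass from step $j$ to step $j+1$, exactly as in the proof of Theorem~2.7 in \cite{BKL2}, but carrying the hyperbolic system along instead of a single equation. First I would set up the base case: for $j=1$ we have $u_{k,1}=u_k$, $\mu_1=\mu$, and the hypothesis $\|u\|_{H^1(\Omega_0)}=1$, $\|f\|_{L^2(\Omega_0)}\leq\exp(-\mu^\alpha)$ directly feeds into Lemma~\ref{lemma_exp1} centered at $y_1\in\partial\Upsilon$; note that near $y_1$ the solution $u$ vanishes on the side $\psi>\psi(y_1)$ because $\{y:\psi(y)>\psi_{max}\}\subset\Upsilon$ and $\supp(u)\cap\Upsilon=\emptyset$, so after translating $\psi$ by the constant $\psi(y_1)$ the support hypothesis \eqref{psi-support} of Lemma~\ref{lemma_exp1} is met (the non-characteristic condition holds since $p_i(y,\psi'(y))\neq0$ on $\Omega_0$ for all $i$). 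The conclusion gives the desired frequency-localized bound at $y_1$ with $\mu_1^{\alpha^2}$ in the exponent.

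The inductive step is the heart of the argument. Assume the bound $\|A(D_0/\omega)b((y-y_j)/r)u_{k,j}\|_{H^1}\leq C_j\exp(-c_1\mu_j^{\alpha^2})$ holds for all $\omega\leq\mu_j^\alpha/c_0$ and all $k$. I would show that this, together with the equation satisfied by $u_{k,j+1}=(1-b_j)u_{k,j}$, verifies the hypotheses of Lemma~\ref{lemma_exp1} centered at $y_{j+1}$ with parameter $\mu_{j+1}:=\mu_j^{\alpha}/c_3$. The point $y_{j+1}$ lies on the boundary of $\cup_{l\le j}B_{r/2}(y_l)\cup\Upsilon$, so on the region $\psi>\psi(y_{j+1})$ near $y_{j+1}$ the function $u_{k,j+1}$ vanishes: either we are in $\Upsilon$ (where $u$ vanishes) or in some ball $B_{r/2}(y_l)$ with $l\le j$ (where $1-b_l=0$, hence $u_{k,j+1}=0$ by the telescoping product), which gives \eqref{psi-support} after shifting $\psi$. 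The key computation is the source term: $u_{k,j+1}$ solves a system of the form \eqref{eq_system} with right-hand side
\begin{equation*}
f_{i,j+1} = (1-b_j)f_{i,j} + [P_i,1-b_j]u_{i,j} + \sum_{k,l}L_{i;kl}(D_l(1-b_j))u_{k,j},
\end{equation*}
where the commutator $[P_i,1-b_j]$ and the term $D_l(1-b_j)$ are supported in $B_r(y_j)$. To check the third hypothesis of Lemma~\ref{lemma_exp1} at $y_{j+1}$, namely $\|A(D_0/\mu_{j+1})b((y-y_{j+1})/R)f_{j+1}\|_{L^2}\leq C\exp(-\mu_{j+1}^\alpha)$, one bounds $\|A(D_0/\mu_{j+1})b((y-y_{j+1})/R)[P_i,1-b_j]u_{i,j}\|_{L^2}$ and the analogous lower-order term: since these are supported where $b_j$ is not locally constant, hence inside $B_r(y_j)$ where $b((y-y_{j+1})/R)b_j$ may be used, one rewrites $[P_i,1-b_j]u_{i,j}$ in terms of derivatives of $u_{i,j}$ multiplied by derivatives of $b_j$, then moves the frequency cutoff $A(D_0/\mu_{j+1})$ past the spatial cutoffs at the cost of a bounded operator (the commutator $[A(D_0/\mu_{j+1}),b_j]$ is negligible since $A$ acts only in $t$ and $b_j$ depends on all of $y$ — here one uses the Gevrey almost-commutativity as in \cite{BKL1,BKL2}), and finally invokes the inductive bound with $\omega=\mu_{j+1}^\alpha/c_0\leq\mu_j^\alpha/c_0$, which is exactly why the recursion $\mu_{j+1}=\mu_j^\alpha/c_3$ with $c_3$ large is forced. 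The term $(1-b_j)f_{i,j}$ inherits its bound from the previous source.

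Having verified all three hypotheses of Lemma~\ref{lemma_exp1} at $y_{j+1}$ (with $C_0$ replaced by a constant depending on $N$ and $r$, which is where the $H^1(\Omega_0)$ bounds $\|u_{k,j}\|_{H^1(\Omega_0)}\leq C(N,r)$ come in — these follow by induction since multiplication by $1-b_j$ and the cutoffs are bounded on $H^1$), the conclusion of that lemma is precisely the inductive statement at level $j+1$: $\|A(D_0/\omega)b((y-y_{j+1})/r)u_{k,j+1}\|_{H^1}\leq C_{j+1}\exp(-c_1\mu_{j+1}^{\alpha^2})$ for $\omega\leq\mu_{j+1}^\alpha/c_0$. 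Since the construction terminates after $N$ steps with $N$ bounded by \eqref{bound-N}, this closes the induction. The main obstacle, and the step requiring the most care, is the frequency-localization bound on the commutator source term $[P_i,1-b_j]u_{i,j}$: one must show that applying the low-frequency (in $t$) cutoff $A(D_0/\mu_{j+1})$ to a spatially-localized expression built from $u_{i,j}$ can be controlled using only the inductive bound on $A(D_0/\omega)b((y-y_j)/r)u_{i,j}$, which requires the frequency threshold to drop by the $\alpha$-power rule and uses crucially that $P_i$, though it differs across $i$ via the wave speeds $v_i$, has principal part of the product form $\partial_t^2 - v_i^2\Delta_g$ so the commutator structure is uniform in $i$; this is where the system setting (as opposed to a single wave equation) must be handled, but since $\|L\|_\infty<\infty$ and the $v_i$ are controlled in $C^1$, the estimates are uniform in $i$ and the argument of \cite{BKL2} goes through with only notational changes.
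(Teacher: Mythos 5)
Your proposal is correct and follows essentially the same strategy as the paper's proof: base case at $y_1$ via Lemma~\ref{lemma_exp1}, then induction on $j$ where $u_{\cdot,j+1}=(1-b_j)u_{\cdot,j}$ solves a new copy of \eqref{eq_system} whose source contains the commutator $[P_i,b_j]u_{i,j}$ and the lower-order $L_{i;kl}(D_l b_j)u_{k,j}$ terms, all supported in the annulus where the step-$j$ frequency-localized bound applies, with the frequency threshold dropping via $\mu_{j+1}=\mu_j^\alpha/c_3$. The paper makes the ``Gevrey almost-commutativity'' step precise through the $\beta$-rescaled cutoff $A(\beta D_0/\mu_{j+1})$ and Lemma~2.3(b),(c) of \cite{BKL1} (including the separate handling of the $D_s u_{i,j}$ terms via $[A(D_0),D_s]=0$), but these are exactly the technical devices your sketch points to, so the argument matches in substance.
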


\begin{proof}
For $j=1$, $u_{\cdot,1}=u$ satisfies the support condition \eqref{psi-support} in $\Omega_0$ for $\tilde{\psi}=\psi-\psi(y_1)$ due to Assumption \ref{assumption1}. This can be argued as follows. For $y\in \Omega_0$, $u(y)\neq 0$ shows $y\in \Omega_0 \setminus \Upsilon$ by Assumption \ref{assumption1}, which means either $y\in (\Omega_0 \setminus \Upsilon) \cap \overline{\Omega}_a$ or $y\in (\Omega_0 \setminus \Upsilon) \setminus \overline{\Omega}_a$. The former implies $\psi(y)\leq \psi(y_1)$ by our choice of $y_1$. The latter implies either $\psi(y) <\psi_{min} \leq \psi(y_1)$ or $\psi(y) >\psi_{max}$. However, the case of $\psi(y) >\psi_{max}$ does not happen due to Assumption \ref{assumption1}:
$$\{y \in \Omega_0: \psi(y) > \psi_{max}\}\subset \Upsilon \subset \{y\in \Omega_0: u(y)=0\}.$$ 

Let $R,r$ be the constants determined in Lemma \ref{lemma_exp1} .
Since $\|A(\frac{D_0}{\mu_1})b(\frac{y-y_1}{R})f\|_0\leq \|f\|_0\leq \exp(-\mu_1^{\alpha})$, applying Lemma \ref{lemma_exp1} gives
\begin{equation}\label{iteration_j=1}
\big\|A(\frac{D_0}{\omega})b(\frac{y-y_1}{r})u \big\|_{H^1} \leq C_1 e^{-c_1\mu_1^{\alpha^2}}, \quad  \forall \omega\leq \mu_1^{\alpha}/c_0.
\end{equation}

\smallskip
For $j=2$, $u_{\cdot,2}=(1-b_1)u$ satisfies the following equation
\begin{equation}\label{formula-u2}
P_i u_{i,2}+ L_i(D u_{\cdot,2},u_{\cdot,2})=(1-b_1)f_i -[P_i,b_1]u_i -\sum_{k=1}^m \sum_{l=0}^n L_{i;kl} (D_l b_1) u_k.
\end{equation}
This can be seen from \eqref{eq-local-u} by replacing $\chi,b_0$ with $1, 1-b_1$.
Observe that $D_l b_1$ is supported in $B_r(y_1)-B_{r/2}(y_1)$ where $b(\frac{y-y_1}{r})=1$. Then by \eqref{iteration_j=1} and Lemma 2.3(b) in \cite{BKL1}, for $\mu_2\leq \mu_1^{\alpha}/c_0$ and $\beta\geq 3$ to be determined later,
\begin{eqnarray} \label{estimate-Dbu}
\big\|A(\frac{\beta D_0}{\mu_2}) b(\frac{y-y_2}{R}) (D_l b_1) u_{i} \big\|_0&=& \big\|A(\frac{\beta D_0}{\mu_2}) b(\frac{y-y_2}{R}) (D_l b_1) b(\frac{y-y_1}{r})u_{i} \big\|_0 \nonumber \\
&\leq& Cr^{-1} \big\|A(\frac{D_0}{\mu_2}) b(\frac{y-y_1}{r}) u_{i} \big\|_0 + C e^{-\cbeta\mu_2^{\alpha}}\|u_i\|_0 \nonumber \\
&\leq& Cr^{-1} e^{-c_1 \mu_1^{\alpha^2}}+ Ce^{-\cbeta \mu_2^{\alpha}}.
\end{eqnarray}
Similarly,
\begin{equation} \label{estimate-DDbu}
\big\|A(\frac{\beta D_0}{\mu_2}) b(\frac{y-y_2}{R}) (D_s D_l b_1) u_{i} \big\|_0 \leq Cr^{-2} e^{-c_1 \mu_1^{\alpha^2}}+ Ce^{-\cbeta \mu_2^{\alpha}}.
\end{equation}
Since the coefficients $L_{i;kl}$ in \eqref{formula-u2} are time-independent, we have
\begin{equation} \label{estimate-LDbu}
\big\|A(\frac{\beta D_0}{\mu_2}) b(\frac{y-y_2}{R}) L_{i;kl} (D_l b_1) u_{k} \big\|_0 \leq C \|L\|_{\infty} r^{-1} e^{-c_1 \mu_1^{\alpha^2}}+ C \|L\|_{\infty} e^{-\cbeta \mu_2^{\alpha}}.
\end{equation}
Recall from Lemma 2.3 in \cite{BKL1} that
%see also Lemma 2.1 in \cite{BKL2}.
\begin{equation}
c_4=c_4(\beta)=C(\alpha, {\rm vol}(B_R))(1-\frac{2}{\beta})^{\alpha} \geq C(\alpha,n,R) \, 3^{-\alpha},
\end{equation}
since $\beta\geq 3$. Thus we can treat $c_4$ as a constant independent of $\beta$.

Next we estimate terms involving $Du$.
\begin{eqnarray*}
\big\|A(\frac{\beta D_0}{\mu_2}) b(\frac{y-y_2}{R}) (D_l b_1) (D_s u_{i}) \big\|_0 &\leq& \big\|D_s \Big( A(\frac{\beta D_0}{\mu_2}) b(\frac{y-y_2}{R}) (D_l b_1) u_{i} \Big) \big\|_0 \\
&&+ \big\|A(\frac{\beta D_0}{\mu_2}) D_s\Big(b(\frac{y-y_2}{R}) (D_l b_1) \Big) u_{i} \big\|_0 \\
&\leq&  \big\|A(\frac{\beta D_0}{\mu_2}) b(\frac{y-y_2}{R}) (D_l b_1) b(\frac{y-y_1}{r})  u_{i} \big\|_{H^1}. \\
&& + \big\|A(\frac{\beta D_0}{\mu_2}) D_s\Big(b(\frac{y-y_2}{R}) (D_l b_1) \Big) b(\frac{y-y_1}{r})  u_{i} \big\|_0,
%&\leq& Cr^{-2} \big\|A(\frac{D_0}{\mu_2}) b(\frac{y-y_1}{r}) u_{i} \big\|_{H^1} + Ce^{-c\mu_2^{\alpha}}\|u_i\|_{H^1} \\ 
%&\leq& Cr^{-2} e^{-c\mu_2^{\alpha}}.
\end{eqnarray*}
where we have used the fact that $[A(D_0),D_s]=0$, for all $s=0,1,\cdots,n$.
%for the $H^1$ term, switch $D$ with $A(3D_0/\mu)$ to in front of $A(D_0/\mu)$ to get $H^1$-norm.
For the second term on the right-hand side, we apply Lemma 2.3(b) in \cite{BKL1} just as before. The first term on the right can be estimated as follows.
\begin{eqnarray*}
\big\|A(\frac{\beta D_0}{\mu_2}) b(\frac{y-y_2}{R}) (D_l b_1) b(\frac{y-y_1}{r})  u_{i} \big\|_{H^1} = \big\|A(\frac{\beta D_0}{\mu_2}) b(\frac{y-y_2}{R}) (D_l b_1) A(\frac{D_0}{\mu_2}) b(\frac{y-y_1}{r})  u_{i} \big\|_{H^1} \\
+ \big\|A(\frac{\beta D_0}{\mu_2}) b(\frac{y-y_2}{R}) (D_l b_1) \Big(1-A(\frac{D_0}{\mu_2})\Big) b(\frac{y-y_1}{r})  u_{i} \big\|_{H^1} \\
\leq Cr^{-2} \big\|A(\frac{D_0}{\mu_2}) b(\frac{y-y_1}{r}) u_{i} \big\|_{H^1}+ Cr^{-1} e^{-\cbeta \mu_2^{\alpha}} \|u_i\|_{H^1},
\end{eqnarray*}
where we have used Lemma 2.3(c) in \cite{BKL1} in the last inequality. Thus by \eqref{iteration_j=1}, we obtain
\begin{equation} \label{estimate-DbDu}
\big\|A(\frac{\beta D_0}{\mu_2}) b(\frac{y-y_2}{R}) (D_l b_1) (D_s u_{i}) \big\|_0 \leq Cr^{-2} e^{-c_1 \mu_1^{\alpha^2}}+Cr^{-1} e^{-\cbeta\mu_2^{\alpha}}.
%\leq Cr^{-2} e^{-c \mu_2^{\alpha}}.
\end{equation}
Combining \eqref{estimate-DDbu} and \eqref{estimate-DbDu} yields
\begin{eqnarray} \label{estimate-Pib1}
\big\|A(\frac{\beta D_0}{\mu_2}) b(\frac{y-y_2}{R}) [P_i,b_1] u_{i} \big\|_0 \leq Cr^{-2} \big(e^{-c_1 \mu_1^{\alpha^2}}+e^{-\cbeta\mu_2^{\alpha}} \big),
\end{eqnarray}
where $c_1,\cbeta$ are independent of $\beta,\mu$.
%the constant depends on $\|v_i\|_{C^1}$.
Moreover,
\begin{eqnarray} \label{estimate-b1f}
\big\|A(\frac{\beta D_0}{\mu_2}) b(\frac{y-y_2}{R}) (1-b_1) f_i \big\|_0 \leq \|f\|_0 \leq e^{-\mu_1^{\alpha}} \leq  e^{-\mu_1^{\alpha^2}}.
%\leq C e^{-c\mu_2^{\alpha}}.
\end{eqnarray}

We assume $\cbeta \leq 3^{-\alpha}$ without loss of generality, and choose $\beta=\cbeta^{-1/\alpha}$ so that the requirement $\beta\geq 3$ is satisfied.
The estimates \eqref{estimate-LDbu}, \eqref{estimate-Pib1}
and \eqref{estimate-b1f} show that the right-hand side of \eqref{formula-u2} satisfies the assumptions of Lemma \ref{lemma_exp1} for $\mu=\cbeta^{1/\alpha}\mu_2$ with the choice
\begin{equation}
\mu_2=\min\big\{ c_0^{-1}, (\frac{c_1}{\cbeta})^{\frac{1}{\alpha}} \big\} \, \mu_1^{\alpha}.
\end{equation}
The support condition \eqref{psi-support} is satisfied by $u_{\cdot,2}$ in $\Omega_0$ by choosing $\tilde{\psi}=\psi-\psi(y_2)$ due to Assumption \ref{assumption1}. 
This can be argued in the same way as for $u_{\cdot,1}$, considering the fact that $u_{\cdot,2}=0$ on $\Upsilon\cup B_{r/2}(y_1)$ by definition \eqref{def-uij}.
Hence applying Lemma \ref{lemma_exp1} to $u_{\cdot,2}$ gives
\begin{equation}\label{iteration_j=2}
\big\|A(\frac{D_0}{\omega}) b(\frac{y-y_2}{r}) u_{\cdot,2} \big\|_{H^1} \leq C_2\exp(-c_1\cbeta^{\alpha}\mu_2^{\alpha^2}), \quad \forall \omega\leq \cbeta \mu_2^{\alpha}/c_0.
\end{equation}

In the same way, the estimates for all $u_{\cdot,j}$ can be obtained by induction. We can choose $\mu=\mu_1>c_2$ sufficiently large such that $\mu_j>1$ for all $j=1,\cdots,N$, where $c_2$ depends on $N$. Recall that $N$ is bounded by \eqref{bound-N}.
For the $H^1$-norm of $u_{\cdot,j}$, we have
\begin{eqnarray*}
\|u_{\cdot,j}\|_{H^1} =\Big\|\Big(\prod_{k=1}^{j-1} (1-b_k) \Big) u \Big\|_{H^1} \leq C N r^{-1}.
\end{eqnarray*}
All relevant constants can be explicitly calculated as in Theorem 2.7 in \cite{BKL2}. In our case, the constants additionally depend on $m$, $\max_i \|v_i\|_{C^1}$ and $\|L\|_{\infty}$.
\end{proof}

Lemma \ref{iteration} yields the following global stability estimate.

\begin{proposition} \label{uc-stability}
Let $\Omega$ be a bounded connected open subset of $\mathbb{R}\times \mathbb{R}^n$. Suppose $u=(u_1,\cdots,u_m)$, $u_i \in H^1(\Omega)$ is a solution of the system of hyperbolic equations \eqref{eq_system} with $f=(f_1,\cdots,f_m),\,f_i \in L^2(\Omega)$.
Assume Assumption \ref{assumption1} is satisfied.
Then we have
\begin{eqnarray*}
 \|u\|_{L^{2}(\overline{\Omega}_a)} \le C \frac{\|u\|_{H^{1}(\Omega_0)}}{\Big(\log\big(1+\frac{\|u\|_{H^{1}(\Omega_0)}}{\|f\|_{L^{2}(\Omega_0)}}\big)\Big)^{\theta}}\,,
\end{eqnarray*}
where $\theta\in (0,1)$ is arbitrary. The constant $C$ explicitly depends on the $C^{2,\rho}$-norm of $\psi$, $\inf_{\Omega_0}|\psi'|$, $\min_i \inf_{\Omega_0}|p_i(\cdot,\psi')|$, $\dist(\partial \Omega_0, \Omega_a)$, $m$, $\max_i \|v_i\|_{C^1}$, $\|L\|_{\infty}$, $\theta$ and geometric parameters.
\end{proposition}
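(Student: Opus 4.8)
The plan is to convert the exponential-decay conclusion of Lemma~\ref{iteration} into the stated logarithmic stability estimate by a standard optimization-in-$\mu$ argument, after first reducing to the normalized case. First I would normalize: assuming $u$ is not identically zero on $\Omega_0$, set $\widetilde u = u/\|u\|_{H^1(\Omega_0)}$, so that $\|\widetilde u\|_{H^1(\Omega_0)}=1$ and $\widetilde f := f/\|u\|_{H^1(\Omega_0)}$ satisfies $\|\widetilde f\|_{L^2(\Omega_0)} = \|f\|_{L^2(\Omega_0)}/\|u\|_{H^1(\Omega_0)} =: \varepsilon$. If $\varepsilon \geq 1$ the claimed bound is trivial (the logarithm is bounded below), so we may assume $\varepsilon < 1$ and choose $\mu \geq c_2$ with $\exp(-\mu^\alpha) = \varepsilon$, i.e.\ $\mu = (\log(1/\varepsilon))^{1/\alpha}$, which is admissible provided $\log(1/\varepsilon)$ is large enough; the small-$\mu$ range is again absorbed into the constant $C$. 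With this choice, Lemma~\ref{iteration} applies and produces the sequence $\mu_1 = \mu$, $\mu_{j+1} = \mu_j^\alpha/c_3$.

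The second step is to control the last member $\mu_N$ of this sequence from below in terms of $\mu$. Iterating $\mu_{j+1} = \mu_j^\alpha/c_3$ gives $\mu_N \geq c_5\, \mu^{\alpha^{N-1}}$ for a constant $c_5 = c_5(N,\alpha,c_3)>0$ (one checks by induction that $\mu_N = c_3^{-(1+\alpha+\cdots+\alpha^{N-2})}\mu^{\alpha^{N-1}} \geq c_3^{-1/(1-\alpha)}\mu^{\alpha^{N-1}}$, using $\mu_j>1$). Taking $\omega$ fixed (say $\omega = 1$, legitimate once $\mu_N^\alpha/c_0 \geq 1$) and using that $A(D_0/\omega)b((y-y_N)/r)$ acts as (close to) the identity on low-frequency parts — more precisely, that $u = \sum_{j=1}^{N}b_j u_{\cdot,j} + u_{\cdot,N+1}$ with $u_{\cdot,N+1}=0$ on $\overline\Omega_a$ by the covering property of the balls $B_{r/2}(y_j)$ — one patches the $N$ local estimates from Lemma~\ref{iteration} together. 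This yields, on $\overline\Omega_a$, a bound of the form $\|\widetilde u\|_{L^2(\overline\Omega_a)} \leq C\exp(-c_1 \mu_N^{\alpha^2}) + (\text{high-frequency remainder})$. The high-frequency remainder is handled by a standard frequency-splitting: writing $\widetilde u = A(D_0/\omega)\widetilde u + (1-A(D_0/\omega))\widetilde u$, the second term has $L^2$-norm $\leq C\omega^{-1}\|\widetilde u\|_{H^1(\Omega_0)} = C\omega^{-1}$ and one optimizes $\omega \sim \mu_N^{\alpha^2}$ — this only costs a polynomial factor in $\mu_N$, hence is dominated after adjusting constants, and contributes the loss of one power in the eventual exponent (explaining why $\theta<1$ is needed rather than $\theta=1$).

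The third step is bookkeeping the exponents. Combining, $\|\widetilde u\|_{L^2(\overline\Omega_a)} \leq C\, \mu^{-c_6}$ for some $c_6>0$ depending on $N,\alpha$ (from $\mu_N^{\alpha^2}\gtrsim \mu^{\alpha^{N+1}}$, polynomial decay absorbs the exponential), so plugging back $\mu = (\log(1/\varepsilon))^{1/\alpha}$ and recalling $\varepsilon = \|f\|_{L^2(\Omega_0)}/\|u\|_{H^1(\Omega_0)}$ gives $\|\widetilde u\|_{L^2(\overline\Omega_a)} \leq C(\log(1/\varepsilon))^{-c_6/\alpha} = C\big(\log\frac{\|u\|_{H^1(\Omega_0)}}{\|f\|_{L^2(\Omega_0)}}\big)^{-\theta}$ after identifying $\theta$ with the achievable exponent; the replacement of $\log(1/\varepsilon)$ by $\log(1+\|u\|_{H^1}/\|f\|_{L^2})$ is harmless since the two differ by bounded multiplicative constants in the relevant range and the estimate is trivial otherwise. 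Finally, undoing the normalization multiplies both sides by $\|u\|_{H^1(\Omega_0)}$, yielding exactly the claimed inequality. The dependence of $C$ on $\|\psi\|_{C^{2,\rho}}$, $\inf|\psi'|$, $\inf|p_i(\cdot,\psi')|$, $\dist(\partial\Omega_0,\Omega_a)$, etc.\ is tracked through the constants $R,r$ of Lemma~\ref{lemma_exp1} (which come from the Carleman estimate of Theorem~\ref{th_carleman}) and the bound \eqref{bound-N} on $N$.

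The main obstacle I expect is controlling the number of iterations $N$ and the resulting exponent $\alpha^{N}$ cleanly, since $N$ depends on the geometry through \eqref{bound-N} and each iteration degrades the exponent multiplicatively by $\alpha<1$; one must verify that the final exponent $\theta = \theta(N,\alpha)$ can be taken to be an arbitrary number in $(0,1)$ by choosing $\alpha$ close to $1$ (equivalently, choosing the Gevrey class) rather than being forced to shrink with the geometry — and that the constant $C$ then still depends only on the listed quantities. A secondary technical point is making the patching $u = \sum b_j u_{\cdot,j} + u_{\cdot,N+1}$ rigorous together with the frequency cutoffs, i.e.\ checking that the low-frequency operators $A(D_0/\omega)$ commute appropriately and that the remainder $u_{\cdot,N+1}$ genuinely vanishes on $\overline\Omega_a$ because the $B_{r/2}(y_j)$ cover $\overline\Omega_a$ by the maximality of the net — this is where Assumption~\ref{assumption1}'s connectivity hypotheses are used.
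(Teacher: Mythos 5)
Your proposal is correct and follows essentially the same route as the paper, which after normalizing $\|u\|_{H^1(\Omega_0)}=1$, disposing of the trivial range $\|f\|_0\geq e^{-c_2}$, and setting $\|f\|_0 = e^{-\mu^\alpha}$, simply invokes Lemma~\ref{iteration} for the low temporal frequencies and cites the argument of Theorem~1.1 in BKL~[BKL2] for the patching, the uniform-in-frequency high-frequency bound, and the final optimization; you have filled in exactly those deferred details. Two small inaccuracies, neither fatal: the frequency cut should be taken at the largest admissible $\omega\sim\mu_N^{\alpha}/c_0$ (not $\mu_N^{\alpha^2}$), since Lemma~\ref{iteration} only guarantees the low-frequency estimate for $\omega\leq\mu_j^\alpha/c_0$ and the high-frequency remainder $\omega^{-1}$ is what dominates; and the restriction $\theta<1$ is forced by the Gevrey parameter $\alpha<1$ accumulating as $\alpha^{N-1}$ through the iteration (with constants degrading as $\alpha\to1$), rather than by the frequency splitting itself.
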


\begin{proof}
Without loss of generality, assume $\|u\|_{H^1(\Omega_0)}=1$.
If $\|f\|_0\geq e^{-c_2}$, then the inequality above satisfies trivially. Otherwise $\|f\|_0=e^{-\mu^{\alpha}}$ for some $\mu>c_2$, where $\alpha\in (0,1)$ is fixed. The estimates for the lower temporal frequencies follow from Lemma \ref{iteration}. Higher temporal frequencies can be estimated uniformly in frequency. Then the $\log$-stability estimate follows by the same argument as Theorem 1.1 in \cite{BKL2}.
\end{proof}

In the same way, we also have the following stability estimate for multiple domains, analogous to Theorem 1.2 in \cite{BKL2}.

\begin{proposition}\label{global}
Let $\Omega$ be a bounded connected open subset of $\mathbb{R}\times \mathbb{R}^n$. Suppose $u=(u_1,\cdots,u_m)$, $u_i \in H^1(\Omega)$ is a solution of the system of hyperbolic equations \eqref{eq_system} with $f=(f_1,\cdots,f_m),\,f_i \in L^2(\Omega)$. In $\Omega$, we assume the existence of a finite number of connected open subsets $\Omega_{j}^0$ and $\Omega_{j}$, $j=1,2,\dots,J$, a connected set $\Upsilon$ and functions $\psi_j$ satisfying the following assumptions.
\begin{enumerate}[(1)]
\item $\psi_j\in C^{2,\rho}(\Omega)$ for some $\rho\in (0,1]$; $\psi'_j(y)\neq 0$, $p_i(y,\psi'_{j}(y))\neq 0$ for all $i,j$ and all $y\in \Omega_{j}^0$, where $p_i$ denotes the principle symbol of the wave operator $P_i$ in \eqref{def-wave}.
\item ${\rm supp}(u)\cap \Upsilon=\emptyset$; there exists $\psi_{max,j}\in\mathbb{R}$ such that $\emptyset\neq\{y\in \Omega_{j}^0: \psi_j(y)> \psi_{max,j}\}\subset \overline{\Upsilon}_j$, where $\Upsilon_j=\Omega_{j}^0\cap(\cup_{l=1}^{j-1}\Omega_l\cup \Upsilon)$.
\item $\Omega_j=\{y\in \Omega_{j}^0-\overline{\Upsilon}_j: \psi_j(y) > \psi_{min,j}\}$ for some $\psi_{min,j}\in\mathbb{R}$, and $\dist(\partial\Omega_{j}^0,\Omega_j)>0$.
\item $\overline{\Omega}_a$ is connected, where $\Omega_a=\cup_{j=1}^J \Omega_j$.
\end{enumerate}
Then the following estimate holds for $\Omega_a$ and $\Omega^0=\cup_{j=1}^J\Omega_{j}^0$:
$$\|u\|_{L^2(\overline{\Omega}_a)}\leq C \frac{\|u\|_{H^1(\Omega^0)}}{\Big(\log\big(1+\frac{\|u\|_{H^1(\Omega^0)}}{\|f\|_{L^2(\Omega^0)}}\big)\Big)^{\theta}}\, ,$$
where $\theta\in(0,1)$ is arbitrary. The dependency of the constant $C$ is the same as Proposition \ref{uc-stability}.
\end{proposition}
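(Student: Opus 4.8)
The plan is to reduce Proposition \ref{global} to a finite iteration of the single-domain estimate contained in Proposition \ref{uc-stability}, propagating vanishing information from $\Upsilon$ through the chain of domains $\Omega_1,\Omega_2,\dots,\Omega_J$ one at a time, exactly in the spirit of Theorem 1.2 in \cite{BKL2}. As a first step I would normalize so that $\|u\|_{H^1(\Omega^0)}=1$, and, as in the proof of Proposition \ref{uc-stability}, dispose of the trivial case $\|f\|_{L^2(\Omega^0)}\geq e^{-c_2}$; otherwise write $\|f\|_{L^2(\Omega^0)}=e^{-\mu^\alpha}$ with $\mu>c_2$. The core object is again the quantity $\|A(D_0/\omega) b((y-y_j)/r) u_{k,j}\|_{H^1}$, where the points $y_j$ and the cut-off functions $u_{k,j}$ are produced by the \emph{Construction} preceding Lemma \ref{iteration}, but now carried out \emph{within each} $\Omega_j^0$ using the function $\psi_j$. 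The key point making the iteration work across different domains is hypothesis (2): since $\mathrm{supp}(u)\cap\Upsilon=\emptyset$ and $\{y\in\Omega_j^0:\psi_j(y)>\psi_{max,j}\}\subset\overline{\Upsilon}_j$ with $\Upsilon_j=\Omega_j^0\cap(\cup_{l<j}\Omega_l\cup\Upsilon)$, once we have established the low-frequency smallness of $u$ on $\Omega_1,\dots,\Omega_{j-1}$, the part of $u$ that has not yet been "controlled" is supported where $\psi_j\leq\psi_{max,j}$, so the support condition \eqref{psi-support} for $\tilde\psi_j=\psi_j-\psi_j(y)$ holds at each new base point in $\overline\Omega_j$, and Lemma \ref{lemma_exp1} applies.

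Concretely, the steps are: (i) apply Lemma \ref{iteration} inside $\Omega_1^0$ with the function $\psi_1$, obtaining low-temporal-frequency decay $\|A(D_0/\omega)b((y-y)/r)u\|_{H^1}\leq C\exp(-c_1\mu_*^{\alpha^2})$ on all of $\overline\Omega_1$ for $\omega$ up to roughly $\mu^{\alpha^N}$, with the loss of the iterated exponents $\mu\mapsto\mu^\alpha/c_3$ absorbed into the final constants; (ii) feed this into the support analysis for $\Omega_2^0$ — the commutator/cut-off manipulations leading to \eqref{formula-u2}–\eqref{iteration_j=2} in Lemma \ref{iteration} are identical, since the only inputs used there were that $u$ vanishes on the previously processed region and that $\psi_j$ is non-critical and non-characteristic on $\Omega_j^0$ (hypothesis (1)); (iii) repeat for $j=3,\dots,J$, each time paying a factor $\alpha$ in the frequency exponent and a bounded multiplicative constant depending on the geometry; (iv) after $J$ steps, combine the low-frequency estimates on the $\Omega_j$ with the trivial uniform bound $\|A(D_0/\omega)^{\perp}u\|\lesssim 1$ on the high temporal frequencies (Plancherel, since $\|u\|_{H^1(\Omega^0)}=1$), optimize the split frequency $\omega\sim\mu^{\alpha^{c J}}$, and unwind the substitution $\mu^\alpha=\log(1/\|f\|)$ to get, on $\Omega_a=\cup_j\Omega_j$, the bound $\|u\|_{L^2(\overline\Omega_a)}\leq C(\log(1+1/\|f\|))^{-\theta}$; rescaling restores the general normalization and yields the stated inequality with $\|u\|_{H^1(\Omega^0)}$ in numerator and inside the log.

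For the constants, the dependencies are read off exactly as in Proposition \ref{uc-stability}: each local application of Lemma \ref{lemma_exp1} contributes the Carleman-related quantities from Theorem \ref{th_carleman} applied to $\psi_j$, which depend on $\|\psi_j\|_{C^{2,\rho}}$, $\inf_{\Omega_j^0}|\psi_j'|$, $\min_i\inf_{\Omega_j^0}|p_i(\cdot,\psi_j')|$ and $\dist(\partial\Omega_j^0,\Omega_j)$; the number of iteration points $N_j$ is bounded as in \eqref{bound-N} by $C(n)\,\mathrm{vol}(\Omega_j^0)r^{-(n+1)}$; and the hyperbolic-system-specific terms contribute dependence on $m$, $\max_i\|v_i\|_{C^1}$ and $\|L\|_\infty$. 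Since $J$ is finite and fixed, composing $J$ such estimates only affects $C$ and the final exponent through a factor of the form $\alpha^{O(J)}$, which can be absorbed since $\theta\in(0,1)$ is arbitrary.

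The main obstacle I expect is the bookkeeping in step (ii)–(iii): one must verify carefully that the support condition \eqref{psi-support} is genuinely inherited at every base point of $\overline\Omega_j$ after the first $j-1$ domains have been processed. This is precisely where hypothesis (2)'s nesting $\Upsilon_j=\Omega_j^0\cap(\cup_{l<j}\Omega_l\cup\Upsilon)$ and the chain condition (4) that $\overline\Omega_a$ is connected are used, and it parallels the argument for $u_{\cdot,2}$ in the proof of Lemma \ref{iteration} (using that $u_{\cdot,2}=0$ on $\Upsilon\cup B_{r/2}(y_1)$); the new subtlety is that the "already-zero" set now includes pieces of other domains $\Omega_l$ where we only know $u$ is \emph{small at low frequencies} rather than exactly zero, so one propagates the frequency-localized smallness through the commutators rather than literal vanishing — but this is exactly the mechanism already implemented via \eqref{estimate-Dbu}–\eqref{estimate-Pib1} and entails no new analytic difficulty, only more indices. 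Everything else is a direct transcription of \cite{BKL2}, Theorem 1.2, with the scalar wave operator replaced by the system estimate of Proposition \ref{Carleman-system} and Lemma \ref{lemma_exp1}.
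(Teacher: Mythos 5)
Your proposal is correct and matches the paper's approach exactly: the paper gives no explicit proof of Proposition \ref{global}, stating only that it follows "in the same way" as Proposition \ref{uc-stability} by the argument of Theorem 1.2 in \cite{BKL2}, and your write-up is a faithful unpacking of that citation — chaining the single-domain iteration of Lemma \ref{iteration} across $\Omega_1^0,\dots,\Omega_J^0$, using hypothesis (2) to inherit the support condition at each stage, and concluding with the low/high temporal frequency split. You also correctly identify and resolve the only genuine subtlety (that earlier domains are only known to carry low-frequency smallness, not exact vanishing, which is handled by carrying the cut-offs forward and propagating smallness through the commutator terms as in \eqref{estimate-Dbu}--\eqref{estimate-Pib1}).
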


\section{Stability of the unique continuation on Riemannian manifolds}
\label{sec-manifold}

From Proposition \ref{global}, one can obtain an explicit stability estimate for the unique continuation on Riemannian manifolds in a similar way as Theorem 3.3 in \cite{BKL2} or Theorem 3.1 in \cite{BILL}. The following is a proof of Theorem \ref{uc-manifold} which is analogous to Theorem 3.1 in \cite{BILL}.

\begin{proof}[Proof of Theorem \ref{uc-manifold}]
The proof is only a slight modification of the proof of Theorem 3.1 in \cite{BILL}.
We consider the submanifold $M-U$ and its boundary has two (smooth) connected components $\partial M,\,\partial U$. We take $\Gamma=\partial U$ in \cite{BILL} and follow the proof of Theorem 3.1 in \cite{BILL} by using Proposition \ref{global}.
Notice that only the first condition in Proposition \ref{global} is affected by the change of wave speed in the wave operator. Hence we only need to check that the domains $\Omega_j^0$ are non-characteristic with respect to $P_i$ for all $i$.

The $\psi_j$-functions constructed in \cite{BILL} in the simplest form are
$$\psi_j(x,t)=\big(T-d(x,z_j)\big)^2-t^2,$$
where $z_j\in M$ are fixed points. In our case of different wave speeds, one can choose the $\psi_j$-functions as follows:
\begin{equation}
\tilde{\psi}_j(x,t)=\big(T-v^{-1}d(x,z_j) \big)^2-t^2,\quad \textrm{for }x\in M,\,t\in [-T,T], 
\end{equation}
where
\begin{equation}\label{def-v}
v:=\min_i \inf_{x\in M} v_i(x)>0. \quad \textrm{(i.e. the minimal wave speed in $M$)}
\end{equation}
The domains $\Omega_j^0$ can be similarly defined as suitable level sets 
\begin{equation}
\Omega_j^0=\{(x,t)\in M \times [-T,T]:\tilde{\psi}_j(x,t)> h\}
\end{equation}
for some small positive parameter $h$.

Then it is straightforward to check that $\tilde{\psi}_j$ is non-characteristic in $\Omega_j^0$ with respect to all $P_i$. Namely, for any $i$,
\begin{eqnarray*}
-p_i \big((x,t),\nabla \tilde{\psi}_{j} \big) &=& v_i(x)^2 \sum_{k,l=1}^n g^{kl} (\partial_{x_k}\tilde{\psi}_j)( \partial_{x_l} \tilde{\psi}_j)-|\partial_t \tilde{\psi}_j|^2 =v_i(x)^2 |\nabla_x \tilde{\psi}_j|^2-|\partial_t \tilde{\psi}_j|^2 \nonumber\\
&=& 4\frac{v_i(x)^2}{v^2} \big(T-v^{-1}d(x,z_j)\big)^2-4t^2 \nonumber \\
&\geq& 4 \tilde{\psi}_j (x,t) > 4h,
\end{eqnarray*}
where we have used the fact that $|\nabla_x d(x,z_j)|=1$ and
$$\nabla_x \tilde{\psi}_j=-2(T-v^{-1}d(x,z_j)) v^{-1} \nabla_x d(x,z_j).$$
Recall that the matrix $(g^{jk})$ in the wave operator \eqref{def-wave} is the inverse of the matrix $(g_{jk})$ that is the Riemannian metric in local coordinates.
%Here we need to use geometric meaning of $g^{kl}$, so the coefficients $g^{kl}$ need to be given by the metric. The lower order terms do not matter since non-characteristic only concerns the principle symbol.

In general, we can choose the $\psi_j$-functions in the present case in a form similarly modified from (3.30) in \cite{BILL}:
\begin{equation}  \label{def-psiij}
\psi_{i,j}(x,t)=\bigg(\Big(1-\xi \big(d(x,\partial M\cup \partial U)\big)-\xi \big(\rho_0-d_h^s(x,z_{i,j})\big)\Big)T_i- v^{-1} d_h^s(x,z_{i,j})\bigg)^2-t^2,
\end{equation}
where $\xi\in C^{2,1}(\mathbb{R}_{\geq 0})$ is a decreasing function supported in $[0,h]$, and $d_h^s(\cdot,z_{i,j})$ is a smoothening of a distance function.
Differentiating $\psi_{i,j}$ with respect to $x$ gives
\begin{eqnarray*}
\nabla_x\psi_{i,j}&=&2\Big(\big(1-\xi(d(x,\partial M \cup \partial U))-\xi(\rho_0-d_h^s(x,z_{i,j}))\big)T_i- v^{-1} d_h^s(x,z_{i,j})\Big) \\
&&\big(-\xi^{\prime}T_i\nabla_x d(x,\partial M \cup \partial U) +\xi^{\prime}T_i\nabla_x d_h^s(x,z_{i,j})-v^{-1} \nabla_x d_h^s(x,z_{i,j})\big).
\end{eqnarray*}
By the construction in \cite{BILL}, $\nabla_x d_h^s(x,z_{i,j})$ and $\nabla_x d(x,\partial M)$ are of opposite directions when $x$ is near the boundary. Since $\xi'\leq 0$, the latter multiplier has length at least $v^{-1}$. Thus by the same calculation above for $\tilde{\psi_j}$, one can still show that the suitable level set of $\psi_{i,j}$ is non-characteristic with respect to all $P_i$.

The additional coefficient $v^{-1}$ can be understood as a time dilation by a factor of $v$. Thus Theorem 3.1 in \cite{BILL} gives the estimate in the following domain (see (2.6) in \cite{BILL}):
\begin{equation*}
\Omega_v (h)=\big\{(x,t)\in (M-U)\times [-T,T]: vT-|vt|-d_{M-U}(x,\partial U) >\sqrt{h},\; d_{M-U}(x,\partial M)>h \big\},
\end{equation*}
where $d_{M-U}$ is the Riemannian distance of the submanifold $M-U$.
Notice that $d_{M-U}(x,\partial U)=d(x,\partial U)$ for any $x\in M-U$, since any path from $x$ to the interior of $U$ must cross $\partial U$. In \eqref{Omega-UTv}, we actually used a smaller domain due to $d_{M-U}(\cdot,\partial M)\geq d(\cdot,\partial M)$. We note that the factor $v^{-1}$ is only added to one distance term in \eqref{def-psiij}. The other two distance terms there control how close this process approximates the optimal domain and therefore a constant factor is inconsequential to the final error estimate.

\smallskip
Finally we turn to the data on $U$. In our case, Lemma 3.3 and 3.4 in \cite{BILL} are not necessary as we have the whole manifold $M$ and functions $u_i\in H^1(M\times [-T,T])$ readily available. We can simply take the functions $u_i$ on $M$ and cut it off near $\partial U$ in $U$. More precisely, take a smooth function $\eta:M\to \R$ such that $\eta=1$ on $M-U$, $0\leq \eta \leq 1$ on $U$ within distance $h$ from $\partial U$, otherwise $0$. Consider $\tilde{u}_i=\eta u_i$. Then $\tilde{u}_i$ satisfies the following equation similar to \eqref{eq-local-u}:
\begin{eqnarray*}
P_i \tilde{u}_i+L_i(D \tilde{u}, \tilde{u})= \eta f_i +[P_i,\eta]u_i +\sum_{k=1}^m \sum_{l=0}^n L_{i;kl} (D_l \eta) u_k \,=:\tilde{f}_i.
\end{eqnarray*}
It is clear that $\|\tilde{u}_i\|_{H^1(M\times [-T,T])}\leq Ch^{-1} \|u_i\|_{H^1(M\times [-T,T])}$, and
\begin{eqnarray*}
\|\tilde{f}_i\|_{L^2(M\times [-T,T])}&\leq&\|f_i\|_{L^2((M-U) \times [-T,T])}+\|\tilde{f}_i\|_{L^2(U\times [-T,T])} \\
&\leq& \|f_i\|_{L^2(M\times [-T,T])}+ Ch^{-2}\|u\|_{H^1(U \times [-T,T])}.
\end{eqnarray*}
Applying these estimates to the last step of the proof of Theorem 3.1 in \cite{BILL} gives the desired estimate.
As for the dependency of the constant $C$, besides what is stated in the theorem, the constant also depends on geometric parameters: $\diam(M),\|R_M\|_{C^4},\|S_{\partial M}\|_{C^4}, \|S_{\partial U}\|_{C^4}$,  $\textrm{inj}(M-U), r_{\textrm{CAT}}(M-U), {\rm vol}_n(M), {\rm vol}_{n-1}(\partial M), {\rm vol}_{n-1}(\partial U)$.
\end{proof}

\begin{remark}
In Theorem \ref{uc-manifold}, we only use $H^1$-norm data on the interior domain $U$, instead of the higher regularity $H^{2,2}$-norm (see (2.7) in \cite{BILL}) on a subset of the manifold boundary used in \cite{BILL}. The extra regularity in \cite{BILL} were used to extend the data on the boundary to an extension of the manifold. In our present case, we are avoiding this issue by using data on an interior domain of the manifold. Nonetheless, the same method is valid for the system of hyperbolic equations with data on the boundary, which yields a similar $\log$-type stability estimate with $H^{2,2}$-norm boundary data.
\end{remark}

\begin{remark} \label{remark-domains}
In particular, if all wave speeds $v_i\equiv 1$ (or a constant), the domain $\Omega_v(h)$ defined in \eqref{Omega-UTv}, where we have quantitative estimates, can be arbitrarily close to the optimal domain given by Tataru's unique continuation theorem for the scalar wave equation in \cite{Tataru1}, since $h$ is a small parameter chosen in advance.
Recall that the optimal domain is known as the \emph{double cone of influence} defined as
$$K(U,T)=\big\{(x,t)\in M\times [-T,T] : d(x,U)< T-|t| \big\}.$$
However, if one wants the estimate in Theorem \ref{uc-manifold} to work closer to the optimal domain, the cost is the constant $\exp(h^{-cn})$ which goes up exponentially.

In the case of $1+1$ dimension and $v_i\equiv 1$, the optimal domain and the domain $\Omega_v(h)$ are illustrated in Figure \ref{fig_domains}. In general, if the wave speeds are not constant, the domain $\Omega_v(h)$, propagating according to the slowest speed, can be significantly smaller than the optimal domain.
\end{remark}

Theorem \ref{uc-manifold} yields the following estimate on the initial value.

\begin{corollary}\label{uc-initial}
Let $(M^n,g)$ be a compact, orientable, smooth Riemannian manifold of dimension $n \geq 2$ with smooth boundary $\partial M$, and $U$ be a connected open subset of $M$ with smooth boundary $\partial U$. Assume $\overline{U}\cap \partial M=\emptyset$.
Suppose $u=(u_1,\cdots,u_m)$, $u_i \in H^1(M\times [-T,T])$ is a solution of the system of hyperbolic equations \eqref{eq_system} with $f=(f_1,\cdots,f_m)=0$. Let $v=\min_i \inf_{x\in M} v_i(x)>0$ be the minimal wave speed in $M$.
If
$$\|u\|_{H^1(M\times [-T,T])}\leq \Lambda_0,\quad \|u\|_{H^1(U\times [-T,T])}\leq \varepsilon_0,$$
then for sufficiently small $h$, we have
$$\|u(\cdot,0)\|_{L^2(\Omega_v (2h,0))}\leq C^{\frac{1}{3}} h^{-\frac{1}{3}} \exp(h^{-c n}) \frac{\Lambda_0}{\Big(\log\big(1+h\frac{\Lambda_0}{\varepsilon_0}\big)\Big)^{\frac16}}\, .$$
The domain $\Omega_v(h,0)$ is defined by
\begin{equation}
\Omega_v(h,0)=\big\{x\in M\setminus U: d(x,\partial U) < vT-\sqrt{h},\; d(x,\partial M)>\sqrt{h} \big\}.
\end{equation}
The constants $C,c$ are independent of $h$, and their dependency is stated in Theorem \ref{uc-manifold}.
\end{corollary}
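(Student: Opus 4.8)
The plan is to pair the space–time estimate of Theorem~\ref{uc-manifold}, applied with $f=0$, with a trace inequality in the time variable, using that once the domain parameter is relaxed from $2h$ to $h$ a $\sqrt{h}$-thin slab around $\{t=0\}$ over the target set still fits inside $\Omega_v(h)$. Taking $f=0$ in Theorem~\ref{uc-manifold} turns its denominator into $\log\big(1+\frac{h^{-1}\Lambda_0}{h^{-2}\varepsilon_0}\big)=\log\big(1+h\frac{\Lambda_0}{\varepsilon_0}\big)$, so one immediately gets
$$E\;:=\;\|u\|_{L^2(\Omega_v(h))}\;\le\;C\exp(h^{-cn})\,\frac{\Lambda_0}{\big(\log(1+h\frac{\Lambda_0}{\varepsilon_0})\big)^{1/2}}\,.$$
Next I would record the elementary geometric inclusion $\Omega_v(2h,0)\times[-\delta,\delta]\subset\Omega_v(h)$ with $\delta:=\frac{\sqrt2-1}{2v}\sqrt h$, valid for all small $h$: if $x\in\Omega_v(2h,0)$ then $d(x,\partial U)<vT-\sqrt{2h}$ and $d(x,\partial M)>\sqrt{2h}$, hence for $|t|\le\delta$ one has $vT-v|t|-d(x,\partial U)>\sqrt{2h}-v\delta>\sqrt h$ and $d(x,\partial M)>\sqrt{2h}>h$, which are exactly the defining conditions in \eqref{Omega-UTv}.

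The key analytic input is the scaling-correct trace/interpolation inequality
$$\|w(\cdot,0)\|_{L^2(D)}^2\;\le\;C\,\delta^{-1}\,\|w\|_{L^2(D\times[-\delta,\delta])}\,\|w\|_{H^1(D\times[-\delta,\delta])},\qquad 0<\delta\le1,$$
obtained by rescaling $t\mapsto t/\delta$ from the one-slab bound $\|\tilde w(\cdot,0)\|_{L^2}^2\le C\|\tilde w\|_{L^2}\|\tilde w\|_{H^1}$ on $D\times[-1,1]$. Applying it with $w=u$, $D=\Omega_v(2h,0)$ and the $\delta$ above, then bounding $\|u\|_{L^2(D\times[-\delta,\delta])}\le\|u\|_{L^2(\Omega_v(h))}=E$ (by the inclusion), $\|u\|_{H^1(D\times[-\delta,\delta])}\le\|u\|_{H^1(M\times[-T,T])}\le\Lambda_0$, and $\delta^{-1}\le Ch^{-1/2}$, gives
$$X^2\;\le\;Ch^{-1/2}E\Lambda_0,\qquad\text{where}\quad X:=\|u(\cdot,0)\|_{L^2(\Omega_v(2h,0))}.$$
The same inequality with both norms replaced by $\|u\|_{H^1(M\times[-T,T])}\le\Lambda_0$ also yields the crude bound $X\le Ch^{-1/2}\Lambda_0$.

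Finally I would combine these two inequalities: multiplying $X^2\le Ch^{-1/2}E\Lambda_0$ by $X\le Ch^{-1/2}\Lambda_0$ gives $X^3\le Ch^{-1}E\Lambda_0^2$, so that, taking cube roots and inserting the value of $E$,
$$X\;\le\;C^{1/3}h^{-1/3}\exp(h^{-cn})\,\frac{\Lambda_0}{\big(\log(1+h\frac{\Lambda_0}{\varepsilon_0})\big)^{1/6}},$$
which is the claimed estimate; the exponents $-\tfrac13$ on $h$ and $-\tfrac16$ on the logarithm emerge precisely from this three-fold balancing, and the dependence of $C$ and $c$ is inherited from Theorem~\ref{uc-manifold}. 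I expect the only subtle point to be the trace step: since $u$ is only known to be in $H^1$, the time derivative appearing there has to be controlled by the a priori bound $\Lambda_0$ rather than by a small quantity, and this, together with the fact that the admissible slab half-width is only of order $\sqrt h$, is exactly what degrades the power of the logarithm from $\tfrac12$ to $\tfrac16$ and introduces the prefactor $h^{-1/3}$; one should also check that $h$ is small enough for $\sqrt{2h}>h$ and $\delta\le1$ to hold.
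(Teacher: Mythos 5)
Your proof is essentially correct and implements exactly what the paper's one-line proof (``interpolation and the trace theorem, cf.\ Corollary 3.9 in \cite{BILL}'') asks for: the geometric inclusion of a thin time-slab into $\Omega_v(h)$, a rescaled $t$-direction trace inequality, and a combination to recover the stated exponents. Two small remarks. First, the inclusion argument is right, but one should also note (as you implicitly do) that the condition $d(x,\partial M)>\sqrt{2h}$ from $\Omega_v(2h,0)$ is more than enough to guarantee the weaker requirement $d(x,\partial M)>h$ appearing in $\Omega_v(h)$, so only small $h$ is needed. Second, there is an arithmetic slip in the ``crude bound'': the same trace inequality with both factors bounded by $\Lambda_0$ and $\delta^{-1}\sim h^{-1/2}$ gives $X\le C h^{-1/4}\Lambda_0$, not $X\le C h^{-1/2}\Lambda_0$. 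This slip is not fatal --- the weaker stated inequality still holds --- but it is worth noting that your own sharper intermediate estimate $X^2\le C h^{-1/2}E\Lambda_0$ already yields
$$
X\le C^{1/2}h^{-1/4}E^{1/2}\Lambda_0^{1/2}\le C'\,h^{-1/4}\exp(h^{-cn})\,\Lambda_0\,\bigl(\log(1+h\Lambda_0/\varepsilon_0)\bigr)^{-1/4},
$$
which is strictly stronger (better powers of $h$ and of the logarithm) than the corollary as stated and in particular implies it. The three-fold balancing you chose, combined with the slipped exponent $h^{-1/2}$, happens to reproduce the paper's exponents $h^{-1/3}$ and $(\cdot)^{-1/6}$ exactly, but neither is necessary: the claimed bound follows directly from the single inequality $X^2\le C h^{-1/2}E\Lambda_0$ since $h^{-1/4}\le h^{-1/3}$ and $L^{-1/4}\le L^{-1/6}$ for $L\ge 1$. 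So your proof is valid, and in fact carries more information than the statement; the only thing to fix is the factor $h^{-1/4}$ in the crude bound if you want the algebra internally consistent.
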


\begin{proof}
This directly follows from interpolation and the trace theorem. 
See the proof of Corollary 3.9 in \cite{BILL} for more details.
\end{proof}

Due to the Sobolev embedding theorem, Theorem \ref{uc-manifold} implies the following stable continuation result on the whole manifold.

\begin{proposition} \label{uc-whole}
Let $(M^n,g)$ be a compact, orientable, smooth Riemannian manifold of dimension $n \geq 2$ with smooth boundary $\partial M$, and $U$ be a connected open subset of $M$ with smooth boundary $\partial U$. Assume $\overline{U}\cap \partial M=\emptyset$.
Suppose $u=(u_1,\cdots,u_m)$, $u_i \in H^1(M\times [-T,T])$ is a solution of the system of hyperbolic equations \eqref{eq_system} with $f=(f_1,\cdots,f_m)=0$. 
Assume $T>2({\rm diam}(M)+1)/v$, where $v=\min_i \inf_{x\in M} v_i(x)>0$ is the minimal wave speed in $M$. 
If
$$\|u\|_{H^1(M\times [-T,T])}\leq \Lambda_0,\quad \|u\|_{H^1(U\times [-T,T])}\leq \varepsilon_0,$$
then there exist constants $h_0,C,c>0$ such that for any $0<h<h_0$, we have
$$\|u\|_{L^2 ((M\setminus U)\times [-\frac{T}{2},\frac{T}{2}] )}\leq C \exp(h^{-c n}) \frac{\Lambda_0}{\Big(\log\big(1+h\frac{\Lambda_0}{\varepsilon_0}\big)\Big)^{\frac12}} + C\Lambda_0 h^{\frac{1}{n+1}} .$$
Furthermore, for any $\theta\in (0,1)$, by interpolation,
$$\|u\|_{H^{1-\theta}((M\setminus U)\times [-\frac{T}{2},\frac{T}{2}])}\leq C^{\theta}\exp(h^{-c n}) \frac{\Lambda_0}{\Big(\log\big(1+h\frac{\Lambda_0}{\epsilon_0}\big)\Big)^{\frac{\theta}{2}}}+ C^{\theta}\Lambda_0 h^{\frac{\theta}{n+1}} .$$
\end{proposition}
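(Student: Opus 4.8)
The plan is to read Proposition \ref{uc-whole} off from Theorem \ref{uc-manifold}, exploiting the fact that once $T$ is this large the quantitative domain $\Omega_v(h)$ already contains all of $(M\setminus U)\times[-\frac{T}{2},\frac{T}{2}]$ except for an $h$-thick collar of $\partial M$, and that such a collar is harmless because $u\in H^1$.

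First I would check the geometric inclusion. Since $d(x,\partial U)\le {\rm diam}(M)$ for every $x\in M$, the hypothesis $T>2({\rm diam}(M)+1)/v$ gives, for all $|t|\le T/2$,
$$vT-v|t|-d(x,\partial U)\ \ge\ \frac{vT}{2}-{\rm diam}(M)\ >\ 1 .$$
Hence, provided we take $h<h_0\le 1$ (so that $\sqrt h<1$), the first defining inequality in \eqref{Omega-UTv} holds on the whole slab, so that
$$(M\setminus U)\times[-\tfrac{T}{2},\tfrac{T}{2}]\ \subset\ \Omega_v(h)\ \cup\ \big(A_h\times[-\tfrac{T}{2},\tfrac{T}{2}]\big),\qquad A_h:=\{x\in M:\ d(x,\partial M)\le h\},$$
and therefore $\|u\|_{L^2((M\setminus U)\times[-T/2,T/2])}^2\le\|u\|_{L^2(\Omega_v(h))}^2+\|u\|_{L^2(A_h\times[-T/2,T/2])}^2$.

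Next I would bound the two pieces. For the first, Theorem \ref{uc-manifold} applied with $f=0$ gives at once
$$\|u\|_{L^2(\Omega_v(h))}\ \le\ C\exp(h^{-cn})\,\frac{\Lambda_0}{\big(\log(1+h^{-1}\Lambda_0/(h^{-2}\varepsilon_0))\big)^{1/2}}\ =\ C\exp(h^{-cn})\,\frac{\Lambda_0}{\big(\log(1+h\Lambda_0/\varepsilon_0)\big)^{1/2}} .$$
For the second, since $\dim\big(M\times[-T,T]\big)=n+1\ge 3$, the Sobolev embedding $H^1\hookrightarrow L^{2(n+1)/(n-1)}$ together with Hölder's inequality yields, for any measurable $A\subset M\times[-T,T]$,
$$\|u\|_{L^2(A)}\ \le\ |A|^{\frac{1}{n+1}}\,\|u\|_{L^{2(n+1)/(n-1)}(M\times[-T,T])}\ \le\ C\,|A|^{\frac{1}{n+1}}\,\|u\|_{H^1(M\times[-T,T])} .$$
Taking $A=A_h\times[-\frac{T}{2},\frac{T}{2}]$ and using ${\rm vol}_n(A_h)\le Ch$ for $h$ small (a one-sided tubular neighbourhood of $\partial M$), this is $\le C\Lambda_0 h^{1/(n+1)}$. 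Combining the two bounds via $\sqrt{a^2+b^2}\le a+b$ gives the first asserted estimate; here $h_0$ is taken to be the minimum of $1$, of the $h_0$ furnished by Theorem \ref{uc-manifold}, and of the threshold below which the collar volume bound is valid.

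Finally, the $H^{1-\theta}$ estimate follows by interpolation: $\|u\|_{H^{1-\theta}}\le\|u\|_{L^2}^{\theta}\|u\|_{H^1}^{1-\theta}\le\|u\|_{L^2}^{\theta}\Lambda_0^{1-\theta}$; one substitutes the $L^2$ bound just obtained and uses the subadditivity $(a+b)^{\theta}\le a^{\theta}+b^{\theta}$ for $\theta\in(0,1)$ together with $\exp(\theta h^{-cn})\le\exp(h^{-cn})$. The only step of the argument that is not a verbatim application of Theorem \ref{uc-manifold} is the treatment of the thin collar of $\partial M$ that $\Omega_v(h)$ discards, and this I expect to be the only — and a very mild — obstacle: it is settled by the elementary fact that an $H^1$ function carries only an $O(h^{1/(n+1)})$ fraction of its norm on a set of volume $O(h)$ in ambient dimension $n+1\ge 3$.
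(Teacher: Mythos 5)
Your proposal is correct and follows essentially the same route as the paper's own proof: decompose $(M\setminus U)\times[-\tfrac{T}{2},\tfrac{T}{2}]$ into $\Omega_v(h)$ plus an $h$-collar of $\partial M$, apply Theorem \ref{uc-manifold} on $\Omega_v(h)$ and the Sobolev embedding $H^1\hookrightarrow L^{2(n+1)/(n-1)}$ plus H\"older on the collar, then interpolate. The only difference is that you spell out the verification that the first defining inequality of $\Omega_v(h)$ is automatic for $|t|\le T/2$ under the hypothesis on $T$, which the paper leaves implicit; this is a welcome but purely expository addition.
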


\begin{proof}
For $T>2({\rm diam}(M)+1)/v$ and $h<1$, we see that 
$$(M\setminus U) \times [-\frac{T}{2},\frac{T}{2}] \subset \Omega_v(h) \cup \mathcal{N}_h\, ,$$
where
$$\mathcal{N}_h :=\{x\in M: d(x,\partial M)\leq h\} \times [-\frac{T}{2},\frac{T}{2}].$$
Theorem \ref{uc-manifold} gives the $L^2$-estimate on $\Omega_v(h)$, and thus we only need to estimate the $L^2$-norm on $\mathcal{N}_h$.
Apply the Sobolev embedding theorem (e.g. Theorem 4.12 in \cite{AF}) to the space $M^n\times [-T,T]$ which satisfies the uniform cone condition (Definition 4.8 in \cite{AF}),
$$\|u\|_{L^{\frac{2(n+1)}{n-1}}(M\times [-T,T])} \leq C\|u\|_{H^1(M\times [-T,T])}\leq C\Lambda_0.$$
Then,
$$\|u\|_{L^2(\mathcal{N}_h)} \leq \|u\|_{L^{\frac{2(n+1)}{n-1}}(M\times [-T,T])} \big(\textrm{vol}(\mathcal{N}_h) \big)^{\frac{1}{n+1}} \leq C \Lambda_0 h^{\frac{1}{n+1}}.$$
\end{proof}

\begin{proof}[Proof of Corollary \ref{uc-whole-loglog}]
Choose $h$ such that the two terms on the right-hand side of the $L^2$-estimate in Proposition \ref{uc-whole} are equal, and we get
\begin{equation}
h=C \big(\log |\log \varepsilon_0| \big)^{-c},
\end{equation}
for some constant $c$ depending only on $n$, and for some constant $C$ independent of $h$.
The condition $h<h_0$ gives the choice for $\widehat{\varepsilon_0}$:
\begin{equation}
\widehat{\varepsilon_0}=\Big( \exp \exp \big(C^{-1}h_0^{-1/c} \big) \Big)^{-1}.
\end{equation}
\end{proof}

%\begin{remark}
%It is possible to not use $v$ as the minimal wave speed on the whole manifold. We could take $v_m$ to be the maximal wave speed in $M$, and consider the maximal domain of influence, and take $v$ to be the minimal wave speed in that maximal domain of influence. But since we later propagate to the whole manifold in applications in the next section, it is not necessary to do that here.
%\end{remark}

\section{Application to fault dynamics}
\label{section-fault}

In this section, let $M^3\subset \mathbb{R}^3$ be a compact domain of dimension $3$ with smooth boundary representing the solid Earth.
Let $\Surf_\frc$ be a ($2$-dimensional) rupture surface. Assume that $\Surf_\frc$ is connected, orientable, smooth with Lipschitz boundary and $\overline{\Surf_\frc}\cap \partial M=\emptyset$. The open set $V$ is the observation domain satisfying $V\subset M\setminus \Surf_\frc$, see Figure \ref{fig_intro}. The set $U$ is a connected open subset of $V$ satisfying $\overline{U}\subset V$. Then it follows that $\overline{U} \cap (\overline{\Surf_\frc}\cup\partial M) =\emptyset$.

\begin{figure}[h]
  \begin{center}
    \includegraphics[width=0.4\linewidth]{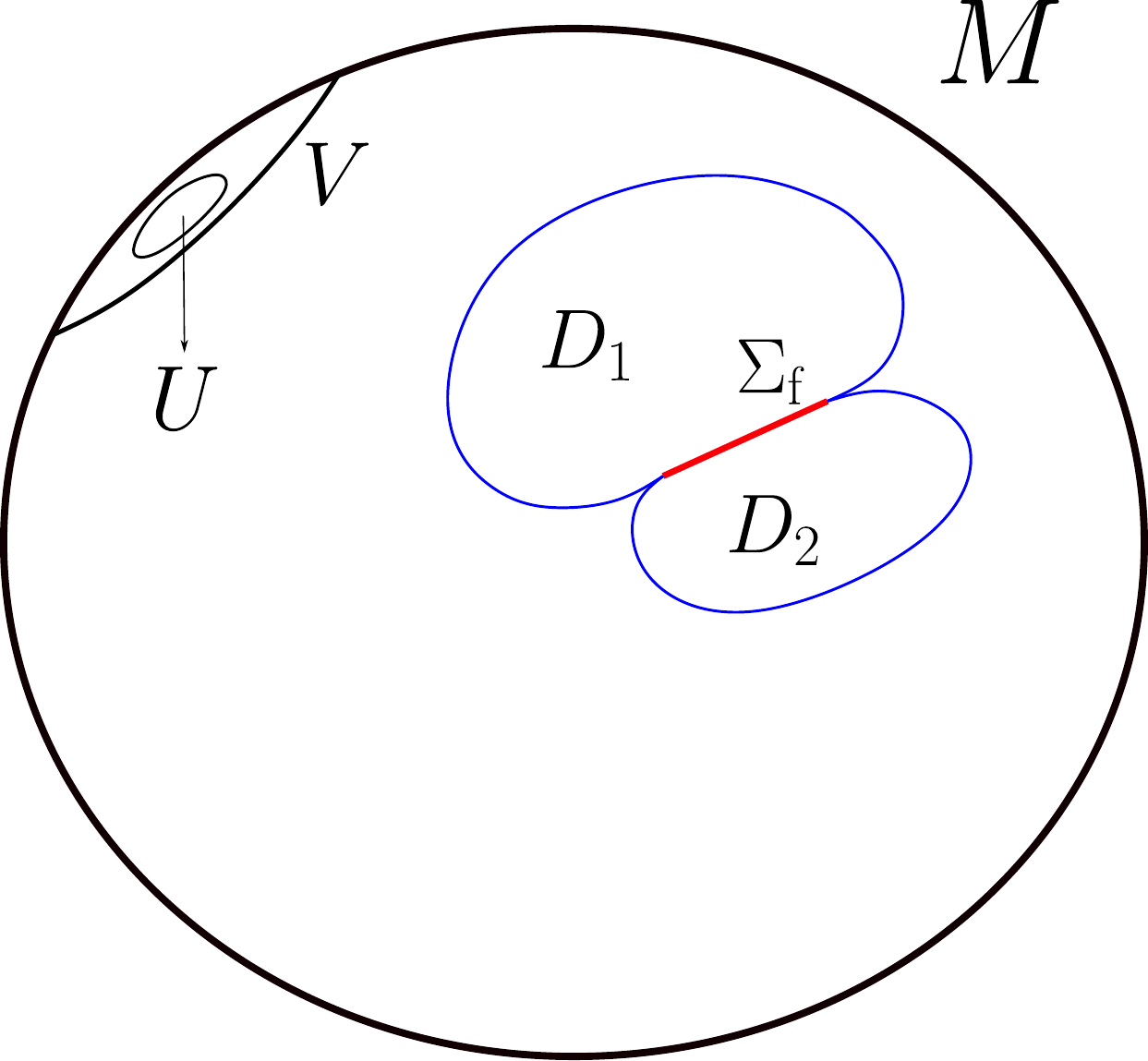}
    \caption{Rupture surface $\Surf_\frc$ under Assumption \ref{assumption2}. The set $U$ is a connected open subset of the observation domain $V$ satisfying $\overline{U}\subset V$.}
    \label{fig_assumption}
  \end{center}
\end{figure}

\subsection{Unique continuation to rupture surface}
We apply our stability results in Sections $\ref{section-uc}$ and $\ref{sec-manifold}$ to seismic waves. For a concise formulation we adopt the following setting.

\begin{assumption} \label{assumption2}
{\rm Suppose that there are two disjoint open subsets $D_1,D_2\subset M$ satisfying $\overline{D_1}\cup \overline{D_2}\subset {\rm int}(M)$ and $\Surf_\frc \subset \partial D_1 \cap \partial D_2$, such that the following condition holds.}
%The condition that the closure of the union is contained in the interior of $M$, guarantees that there exists an open set in $M_1\cap M_2$.

\smallskip
\noindent $(\ast)$ For $j=1,2$, $M_j=M\setminus \overline{D_j}$ is a connected open subset of $M$ with smooth boundary.
\end{assumption}

Under Assumption \ref{assumption2}, it follows that $M_1\cap M_2$ contains an open set of $M$, and
\begin{equation}\label{M1M2}
\Surf_\frc \subset \partial M_1\cap \partial M_2,\quad M=M_1\cup M_2.
\end{equation}
In other words, the rupture surface $\Surf_\frc$ can be approached from both sides, and it can be extended on either side into the boundary of a smooth submanifold. In practice one can try to construct the subsets $D_1,D_2$ to be open topological (3-dimensional) balls with smooth boundary, such that their closures $\overline{D_1},\overline{D_2}$ do not intersect $\partial M$. If such $D_1,D_2$ can be constructed, then the condition $(\ast)$ is satisfied.

\smallskip
We consider function spaces on the disjoint union $M_1\sqcup M_2$ instead of on $M$.
We say a function $u\in H^s(M_1\sqcup M_2)$ if $u|_{M_j}\in H^s(M_j)$ for $j=1,2$.
%The $H^s$ space for $M$ consists of functions whose restriction on every $\Omega_j$ is of $H^s$. 
We define the $H^s$-norm on $M_1\sqcup M_2$ by
%The $H^s$-norm for $M$ is defined by summing the $H^s$-norms on all $\Omega_j$, i.e.
\begin{equation}\label{def-Hs}
\|u\|_{H^s(M_1\sqcup M_2)}^2:= \sum_{j=1}^{2} \big\|u|_{M_j} \big\|_{H^s(M_j)}^2,
\end{equation}
and for $T> 0$,
\begin{equation}\label{def-Hs}
\|u\|_{H^s ((M_1\sqcup M_2)\times [-T,T])}^2:= \sum_{j=1}^{2} \big\|u|_{M_j\times [-T,T]} \big\|_{H^s(M_j \times [-T,T])}^2.
\end{equation}

The seismic wave $\veu$ is modeled by the following equation of motion
\begin{equation} \label{eq-seismic}
\rho \partial_t^2 \veu -\nabla\cdot(\tsLaINI:\nabla\veu)=0 \quad \textrm{in } M \setminus \overline{\Surf_\frc}\, ,
\end{equation}
where the prestressed elasticity tensor $\tsLaINI$ is related to the \textit{in situ}
 isentropic stiffness tensor $\tsC$ by
\[
    \itLaINI_{ijkl}=C_{ijkl}+\frac12\Big(
     (T^0)_{ij}\delta_{kl}
    +(T^0)_{kl}\delta_{ij}
    +(T^0)_{ik}\delta_{jl}
    -(T^0)_{il}\delta_{jk}
    -(T^0)_{jk}\delta_{il}
    -(T^0)_{jl}\delta_{ik}
    \Big) ,
\]
and the operation $:$ is defined as $(\tsLaINI:\nabla\veu)_{ij}=\sum_{k,l} \itLaINI_{ijkl} \partial_l u_k$ in components.
In the case of isotropy and hydrostatic prestress $\tsT^0 = -p^0
\boldsymbol{I}$, the prestressed elasticity tensor $\tsLaINI$ has the form
\begin{equation} \label{T0-simple}
   \itLaINI_{ijkl} = \lambda \delta_{ij} \delta_{kl}
   + \mu (\delta_{ik} \delta_{jl} + \delta_{il} \delta_{jk})
   - p^0 (\delta_{ij} \delta_{kl} - \delta_{il} \delta_{jk}) .
\end{equation}
In this case the equation \eqref{eq-seismic} has the same form as the classical elasticity system in \eqref{eq-elastic}, namely
\begin{equation}\label{eq-seismic-wave}
\rho \partial_t^2 \veu -\mu\Delta \veu-(\lambda+\mu)\,\nabla{\rm div} \, \veu+\,\textrm{first order terms}=0.
\end{equation}
Note that $p^0$ appears in the first order terms of \eqref{eq-seismic-wave}, see e.g. \cite[Section 2.2]{SLXSD}.
We assume that
\begin{equation}
\rho,\, \mu,\, \lambda,\, p^0 \in C^{\infty}(M\setminus \overline{\Surf_\frc}) \,\textrm{ are time-independent}.
\end{equation}
%We assume that $\rho\in C^1(M\setminus \overline{\Surf_\frc})$ and $\mu,\lambda,p^0 \in C^2(M\setminus \overline{\Surf_\frc})$ are time-independent.

Due to Lemma \ref{elastic-to-system}, the equation \eqref{eq-seismic} with \eqref{T0-simple} can be written in the form of the system of hyperbolic equations \eqref{eq_system}, and therefore our results in Sections $\ref{section-uc}$ and $\ref{sec-manifold}$ apply.
%Assume only one rupture surface $\Sigma_f$ is present for simplicity.
We consider the unique continuation in each smooth manifold $M_j$ $(j=1,2)$ with smooth boundary assumed in Assumption \ref{assumption2}. 
We observe on a connected open subset $U\subset M_1\cap M_2$ satisfying $\overline{U} \cap \partial M_j=\emptyset$. Thus we can apply Corollary \ref{uc-whole-loglog} to each manifold $M_j$ with the open set $U$.

\begin{theorem}\label{uc-fault}
Let $M^3$ (the solid Earth), $\Surf_\frc$ (the rupture surface) be defined at the beginning of Section \ref{section-fault}.
Let $M_j$ $(j=1,2)$ be the submanifolds with smooth boundary as in Assumption \ref{assumption2},
and $\veu=(u_1,u_2,u_3),\, u_i\in H^2((M_1\sqcup M_2)\times [-T,T])$ be a seismic wave satisfying \eqref{eq-seismic} with \eqref{T0-simple}.
We observe on a connected open subset $U\subset M_1\cap M_2$ with smooth boundary satisfying $\overline{U} \cap \partial M_j=\emptyset$. 
Assume $T>2\big(\max_j {\rm diam}(M_j)+1\big)/v$, where $v=\inf_M \sqrt{\mu/\rho}$ is the minimal wave speed.
If
$$\|\veu\|_{H^2((M_1\sqcup M_2)\times [-T,T])}\leq \Lambda_0,\quad \|\veu\|_{H^2(U \times [-T,T])}\leq \varepsilon_0,$$
%in fact, $H^{2,1}$ is enough (two spatial derivatives and one time derivative).
then there exist constants $\widehat{\varepsilon_0},C,c$ such that for any $0<\varepsilon_0<\widehat{\varepsilon_0}$, we have
$$\big\|(\veu, {\rm div}\, \veu, {\rm curl}\, \veu) \big\|_{L^2 ((M_j \setminus U)\times [-\frac{T}{2},\frac{T}{2}] )}\leq C \big(\log |\log \varepsilon_0| \big)^{-c} ,$$
where $C$ is independent of $\varepsilon_0$, and $c$ is an absolute constant.
Furthermore, for any $\theta\in (0,1)$, by interpolation,
$$\big\|(\veu, {\rm div}\, \veu, {\rm curl}\, \veu) \big\|_{H^{1-\theta}((M_j \setminus U)\times [-\frac{T}{2},\frac{T}{2}])}\leq C \big(\log |\log \varepsilon_0| \big)^{-\theta c}.$$
\end{theorem}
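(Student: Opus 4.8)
The plan is to deduce Theorem \ref{uc-fault} from the whole-manifold estimate Corollary \ref{uc-whole-loglog}, after recasting the prestressed elasticity equation \eqref{eq-seismic} with \eqref{T0-simple} (i.e. \eqref{eq-seismic-wave}) as a hyperbolic system of the form \eqref{eq_system} via the classical P/S wave decomposition. Concretely, set $\mathbf w:=(\veu,\,{\rm div}\,\veu,\,{\rm curl}\,\veu)$, a vector with $3+1+3=7$ scalar components. By Lemma \ref{elastic-to-system}, on each of the smooth manifolds $M_j=M\setminus\overline{D_j}$ $(j=1,2)$ from Assumption \ref{assumption2} the field $\mathbf w$ solves a system \eqref{eq_system} with $f=0$, in which the three $\veu$-components and the three ${\rm curl}\,\veu$-components carry the shear speed $v_s=\sqrt{\mu/\rho}$ and the ${\rm div}\,\veu$-component carries the pressure speed $v_p=\sqrt{(\lambda+2\mu)/\rho}$, with coupling operators $L_i$ of the admissible form \eqref{def-Li}, i.e. linear in $(D\mathbf w,\mathbf w)$ with time-independent bounded coefficients built from $\rho,\mu,\lambda,p^0$ and their first derivatives. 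Since $v_p\ge v_s$ pointwise, the minimal wave speed of this system on $M_j$ is $\inf_{M_j}v_s\ge v:=\inf_M\sqrt{\mu/\rho}$, so the hypothesis $T>2(\max_j{\rm diam}(M_j)+1)/v$ implies the condition $T>2({\rm diam}(M_j)+1)/v_{\min,j}$ needed to apply Corollary \ref{uc-whole-loglog} on $M_j$.

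Next comes the regularity bookkeeping. Since ${\rm div}\,\veu$ and ${\rm curl}\,\veu$ each cost one derivative of $\veu$, the hypothesis $\veu\in H^2((M_1\sqcup M_2)\times[-T,T])$ yields $\mathbf w\in H^1$ with, for each $j$,
\begin{equation*}
\|\mathbf w\|_{H^1(M_j\times[-T,T])}\le C\,\|\veu\|_{H^2(M_j\times[-T,T])}\le C\Lambda_0,\qquad \|\mathbf w\|_{H^1(U\times[-T,T])}\le C\,\|\veu\|_{H^2(U\times[-T,T])}\le C\varepsilon_0 .
\end{equation*}
As $U\subset M_1\cap M_2$ is a connected open subset of $M_j$ with $\overline U\cap\partial M_j=\emptyset$, Corollary \ref{uc-whole-loglog} applied to the system solved by $\mathbf w$ on $M_j$ gives $\|\mathbf w\|_{L^2((M_j\setminus U)\times[-\frac{T}{2},\frac{T}{2}])}\le C\big(\log|\log(C\varepsilon_0)|\big)^{-c}$ with $c$ depending only on $n=3$, hence an absolute constant. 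Choosing $\widehat{\varepsilon_0}$ so small that $|\log(C\varepsilon_0)|\ge\frac12|\log\varepsilon_0|$ for $0<\varepsilon_0<\widehat{\varepsilon_0}$ and absorbing constants converts this into the asserted bound for $(\veu,\,{\rm div}\,\veu,\,{\rm curl}\,\veu)$ on $(M_j\setminus U)\times[-\frac{T}{2},\frac{T}{2}]$. The $H^{1-\theta}$ estimate then follows either from the interpolation clause of Corollary \ref{uc-whole-loglog} applied to $\mathbf w$, or by interpolating directly between the $L^2$ bound just obtained and $\|\mathbf w\|_{H^1(M_j\times[-T,T])}\le C\Lambda_0$. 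The stated dependence of $C$ (in particular its independence of $\varepsilon_0$) is inherited from Corollary \ref{uc-whole-loglog} and Lemma \ref{elastic-to-system}.

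The real content, and the step I expect to be the main obstacle, lies in Lemma \ref{elastic-to-system}. Differentiating the variable-coefficient equation \eqref{eq-seismic-wave} to produce scalar wave equations for ${\rm div}\,\veu$ and ${\rm curl}\,\veu$ naively generates terms in which derivatives of $\rho,\mu,\lambda$ are contracted against second derivatives of $\veu$, and these are \emph{not} of the admissible first-order form \eqref{def-Li}. The key is to rewrite every such term using the vector identity $\Delta\veu=\nabla({\rm div}\,\veu)-{\rm curl}({\rm curl}\,\veu)$, which replaces second derivatives of $\veu$ by first derivatives of ${\rm div}\,\veu$ and ${\rm curl}\,\veu$, making the whole coupling linear in $(D\mathbf w,\mathbf w)$ with merely bounded coefficients; this is exactly the principally-diagonal reduction of \cite{EINT}. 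A secondary technical point is that the coefficients of the resulting system must remain admissible ($C^1$ wave speeds, $C^0$ lower-order coefficients) up to $\partial M_j$, which uses $\overline{\Surf_\frc}\cap\partial M=\emptyset$ together with the structure $M_j=M\setminus\overline{D_j}$, $\overline{D_j}\subset{\rm int}(M)$ from Assumption \ref{assumption2}. Granting these, Theorem \ref{uc-fault} is a direct application of Corollary \ref{uc-whole-loglog} on $M_1$ and on $M_2$.
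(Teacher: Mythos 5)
Your proposal is correct and follows essentially the same route as the paper: reduce \eqref{eq-seismic} via Lemma \ref{elastic-to-system} to a system \eqref{eq_system} for $(\veu,\,{\rm div}\,\veu,\,{\rm curl}\,\veu)$, pass from $H^2$ control of $\veu$ to $H^1$ control of that vector, and apply Corollary \ref{uc-whole-loglog} on each $M_j$. Your extra remarks on the P/S wave speeds and on how the curl--div identity absorbs the cross-terms are good context but are already subsumed in Lemma \ref{elastic-to-system}, which the paper simply invokes.
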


\begin{proof}
By Lemma \ref{elastic-to-system}, the vector-valued function 
\begin{equation*} \label{vector-F}
{\bf F}:=(\veu, {\rm div}\, \veu, {\rm curl}\, \veu)
\end{equation*}
satisfies the system of hyperbolic equations \eqref{eq_system}.
The bounds on the $H^2$-norm of $\veu$ give the bounds
\begin{equation*} \label{norm-F}
\|{\bf F}\|_{H^1(M_j\times [-T,T])}\leq \Lambda_0,\quad \|{\bf F}\|_{H^1(U \times [-T,T])}\leq \varepsilon_0.
\end{equation*}
Then applying Corollary \ref{uc-whole-loglog} yields the result.
\end{proof}

\begin{remark}
The constant $C$ in Theorem \ref{uc-fault} depends on the geometric parameters of $M_j$ assumed in Assumption \ref{assumption2}.
\end{remark}

\noindent \textbf{The trace onto rupture surface.}
Recall that Assumption \ref{assumption2} indicates $\Surf_\frc\subset \partial M_1\cap \partial M_2$.
In the boundary normal neighborhoods of $M_1,M_2$, the two sides of $\Surf_\frc$ are product spaces $\Surf_\frc\times [0,{\rm inj}(M_1)/2]$ and $\Surf_\frc\times [0,{\rm inj}(M_2)/2]$, where ${\rm inj}(M_j)$ is the injectivity radius of $M_j$. Theorem \ref{uc-fault} gives an estimate on the $H^{1-\theta}$-norm of $\veu$ on two sides of $\Surf_\frc$, more precisely, on $\Surf_\frc\times [0,{\rm inj}(M_1)/2]\times [-T/2,T/2]$ and $\Surf_\frc\times [0,{\rm inj}(M_2)/2]\times [-T/2,T/2]$.
Hence the trace theorem yields that the trace of $\veu$ onto $\Surf_\frc$ is well-defined from both sides in $H^{\kappa}(\Surf_\frc \times [-T/2,T/2])$ for $\kappa\in (0,1/2)$. Namely, writing the trace of $\veu$ onto $\Surf_\frc$ from the two sides as
\begin{equation} \label{def-traceu}
\veu_{\pm}:= \lim_{h\to 0^{\pm}} \veu (z+h\ven,t),\quad z\in \Surf_\frc,
\end{equation}
the trace theorem and Theorem \ref{uc-fault} yield that, for any $\kappa\in (0,1/2)$,
\begin{eqnarray} \label{estimate-trace}
\|\veu_{\pm}\|_{H^{\kappa}(\Surf_\frc \times [-\frac{T}{2},\frac{T}{2}])} &\leq& \max_j \, C(\kappa,M_j) \|\veu\|_{H^{\kappa+\frac12}((M_j\setminus U)\times [-\frac{T}{2},\frac{T}{2}])} \nonumber \\
&\leq& C \big(\log |\log \varepsilon_0| \big)^{-(\frac12-\kappa)c},
\end{eqnarray}
where $C$ is independent of $\varepsilon_0$.

\smallskip
\subsection{Kinematic inverse rupture problem}

Now we show that we can determine the displacement, $\veu$, and traction, $\vetau_1$, on
both sides of the rupture surface $\Surf_\frc$ by the unique
continuation. By implication, we obtain the tangential jump of particle displacement, $[\veu_\TT]_-^+$ and friction force, $\vetau_\frc$.

On the (orientable) rupture surface $\Surf_\frc$ with unit normal vector
$\ven$, the dynamic slip boundary condition and the force equilibrium
are satisfied, which gives
\begin{equation}
    \left\{
        \arraycolsep=1.4pt\def\arraystretch{1.7}
        \begin{array}{rl}
            \jmp{\ven\cdot\veu} = & 0 ,
            \\
            \jmp{\vetau_1(\veu)+\vetau_2(\veu)} = & 0 ,
            \\
            \vetau_\frc- \big(\ven\cdot\tsT^0
            +\vetau_1(\veu)+\vetau_2(\veu) \big)_\TT = & 0 ,
        \end{array}
        \right.
        \quad \mbox{on } \fSurf,
    \label{eq:fric bc}
\end{equation}
with
\begin{equation}
    \left\{
        \arraycolsep=1.4pt\def\arraystretch{1.7}
        \begin{array}{rl}
            \vetau_1(\veu)=&\ven\cdot(\tsLaINI:\nabla\veu) ,
            \\
            \vetau_2(\veu)=&
            -\nablaS\cdot \big(\veu(\ven\cdot\tsT^0) \big) ,
        \end{array}
        \right.
    \label{eq:fric bc var}
\end{equation}
both of which are linearly depending on $\veu$, and the surface
divergence is defined by $\nablaS\cdot\vef
=\nabla\cdot\vef-(\nabla\vef\cdot\ven)\cdot\ven$. 
Here the labels ``$+$'' and ``$-$'' indicate the two sides of $\Surf_\frc$, and the subscript ${}_\TT$ represents the tangential component with respect to $\Surf_\frc$.
On the exterior boundary $\partial M$, with unit normal vector $\boldsymbol{\nu}$ of the domain $M$,
we apply the boundary condition
\[
   \boldsymbol{\nu}\cdot(\tsLaINI:\nabla\veu)
   = \boldsymbol{\nu}\cdot\tsT^0 = 0 .
\]
In the above, $\tsT^0$ is known and we assume that  the components of $\tsT^0$ are time-independent smooth functions on $M\setminus \overline{\Surf_\frc}$. 
Denote by $\veu_{\pm}, (\vetau_1(\veu))_{\pm}, (\vetau_2(\veu))_{\pm}$ the traces of $\veu, \vetau_1(\veu), \vetau_2(\veu)$ on the two sides of the rupture surface $\Surf_\frc$, respectively.

\smallskip
With Theorem \ref{uc-fault}, we can solve the kinematic inverse rupture problem as follows.

\begin{theorem} \label{determine-fault}
Let $M^3$ (the solid Earth), $\Surf_\frc$ (the rupture surface) be defined at the beginning of Section \ref{section-fault}.
Let $M_j$ $(j=1,2)$ be the submanifolds with smooth boundary as in Assumption \ref{assumption2},
and $\veu=(u_1,u_2,u_3),\, u_i\in H^2((M_1\sqcup M_2)\times [-T,T])$ be a seismic wave satisfying \eqref{eq-seismic} with \eqref{T0-simple}.
We observe on a connected open subset $U\subset M_1\cap M_2$ with smooth boundary satisfying $\overline{U} \cap \partial M_j=\emptyset$. 
Then for sufficiently large $T$, we can determine 
$$\veu_{\pm}\in H^{\kappa}(\Surf_\frc \times [-\frac{T}{2},\frac{T}{2}]), \;\quad (\vetau_1(\veu))_{\pm}, \, (\vetau_2(\veu))_{\pm}, \, \vetau_\frc \in H^{\kappa-1}(\Surf_\frc \times [-\frac{T}{2},\frac{T}{2}]),$$
for any $\kappa\in (0,\frac12)$. 

Furthermore, if 
$$\|\veu\|_{H^2((M_1\sqcup M_2)\times [-T,T])}\leq \Lambda_0,\quad \|\veu\|_{H^2(U \times [-T,T])}\leq \varepsilon_0,$$
then there exist constants $\widehat{\varepsilon_0},C,c$ such that for any $0<\varepsilon_0<\widehat{\varepsilon_0}$, we have
\begin{equation*}
\|\veu_{\pm}\|_{H^{\kappa}(\Surf_\frc \times [-\frac{T}{2},\frac{T}{2}])} + \big\|\vetau_\frc-(\ven\cdot\tsT^0)_\TT \big\|_{H^{\kappa-1}(\Surf_\frc \times [-\frac{T}{2},\frac{T}{2}])} \leq C \big(\log |\log \varepsilon_0| \big)^{-c},
%$C,c$ depend on $\kappa$.
\end{equation*}
where $C$ is independent of $\varepsilon_0$, and $c$ depends only on $\kappa$.
\end{theorem}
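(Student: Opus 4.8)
The plan is to combine the quantitative unique continuation of Theorem~\ref{uc-fault} with the trace theorem and with an algebraic identity that re-expresses the traction $\vetau_1(\veu)$ on $\Surf_\frc$ in terms of quantities that Theorem~\ref{uc-fault} already controls, namely ${\rm div}\,\veu$, ${\rm curl}\,\veu$, and the surface trace $\veu_\pm$ together with its surface-tangential derivatives. Passing through $({\rm div}\,\veu,{\rm curl}\,\veu)$ rather than the full gradient $\nabla\veu$ is essential here: $\nabla\veu$ is only bounded a priori by $\Lambda_0$, whereas the vector ${\bf F}=(\veu,{\rm div}\,\veu,{\rm curl}\,\veu)$ solves a hyperbolic system of the form \eqref{eq_system} by Lemma~\ref{elastic-to-system}, so it is governed by Theorem~\ref{uc-fault}.

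First I would fix $\kappa\in(0,1/2)$, set $\theta=1/2-\kappa\in(0,1/2)$, and record from Theorem~\ref{uc-fault} (for $T$ sufficiently large, $j=1,2$) that
$$\big\|(\veu,{\rm div}\,\veu,{\rm curl}\,\veu)\big\|_{H^{1-\theta}((M_j\setminus U)\times[-\frac T2,\frac T2])}\le C\big(\log|\log\varepsilon_0|\big)^{-\theta c}.$$
Since $\overline{\Surf_\frc}\cap\overline{U}=\emptyset$ and $\Surf_\frc\subset\partial M_j$, restricting to $\Surf_\frc$ along the boundary-normal collar $\Surf_\frc\times[0,{\rm inj}(M_j)/2]$ costs half a spatial derivative, so the traces of $\veu$, ${\rm div}\,\veu$ and ${\rm curl}\,\veu$ on $\Surf_\frc\times[-\frac T2,\frac T2]$ from the $M_j$-side lie in $H^{1/2-\theta}=H^{\kappa}$ with the same bound; for $\veu$ this is exactly \eqref{estimate-trace}, and it provides the two sides $\veu_{\pm}$.

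The crux is the following identity for $\vetau_1(\veu)$. In boundary-normal coordinates near $\Surf_\frc$, with $\ven$ extended to the unit geodesic normal field and $\{\ven,\vetau_1,\vetau_2\}$ the adapted orthonormal frame, one decomposes $\nabla\veu$ in this frame: $\partial_\ven(\veu\cdot\ven)$ equals ${\rm div}\,\veu$ minus the surface-tangential derivatives $\partial_{\vetau_a}(\veu\cdot\vetau_a)$, modulo terms linear in $\veu$ carrying the second fundamental form of $\Surf_\frc$, while $\partial_\ven(\veu\cdot\vetau_1)$ and $\partial_\ven(\veu\cdot\vetau_2)$ are recovered from the two tangential components of ${\rm curl}\,\veu$ and the surface-tangential derivatives $\partial_{\vetau_b}(\veu\cdot\ven)$, again modulo connection terms linear in $\veu$. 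Substituting into
$$\vetau_1(\veu)=\ven\cdot(\tsLaINI:\nabla\veu)=\lambda\,({\rm div}\,\veu)\,\ven+\mu\big(\nabla(\ven\cdot\veu)+\partial_\ven\veu\big)+R(\veu),$$
where $R(\veu)$ collects the zeroth-order-in-$\veu$ contributions (derivatives of the normal field together with the prestress $p^0$ part of $\tsLaINI$) with smooth time-independent coefficients, exhibits $\vetau_1(\veu)|_{\Surf_\frc}$ as a combination of the traces of ${\rm div}\,\veu$ and ${\rm curl}\,\veu$ (in $H^{\kappa}$), of $\veu_{\pm}$ (in $H^{\kappa}$), and of $\partial_{\rm tan}\veu_{\pm}$ (in $H^{\kappa-1}$); hence $\vetau_1(\veu)_{\pm}\in H^{\kappa-1}(\Surf_\frc\times[-\frac T2,\frac T2])$ with bound $C(\log|\log\varepsilon_0|)^{-\theta c}$. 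The main obstacle is exactly the careful bookkeeping here: one must check that every term of $\vetau_1(\veu)$ not already among ${\rm div}\,\veu$, ${\rm curl}\,\veu$, $\veu_{\pm}$, $\partial_{\rm tan}\veu_{\pm}$ is genuinely of zeroth order in $\veu$ with smooth, time-independent coefficients, so that the Sobolev indices close.

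Finally, $\vetau_2(\veu)=-\nablaS\cdot\big(\veu(\ven\cdot\tsT^0)\big)$ is a surface divergence of a surface field built from $\veu_{\pm}$ and the known smooth tensor $\tsT^0$, so multiplication by a smooth function and $\nablaS\cdot\colon H^{\kappa}\to H^{\kappa-1}$ give $\vetau_2(\veu)_{\pm}\in H^{\kappa-1}$ with the same bound; the third relation in \eqref{eq:fric bc} gives $\vetau_\frc-(\ven\cdot\tsT^0)_\TT=(\vetau_1(\veu)+\vetau_2(\veu))_\TT$, and since $\ven\cdot\tsT^0$ is known and smooth and the tangential projection is bounded, $\|\vetau_\frc-(\ven\cdot\tsT^0)_\TT\|_{H^{\kappa-1}}\le C(\log|\log\varepsilon_0|)^{-\theta c}$; together with \eqref{estimate-trace} for $\|\veu_{\pm}\|_{H^{\kappa}}$ and absorbing $\theta=1/2-\kappa$ into a constant depending on $\kappa$, this is the claimed estimate. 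The qualitative determination is the same computation applied to the difference of two solutions sharing the data on $U$, which vanishes on $(M_j\setminus U)\times[-\frac T2,\frac T2]$ by the $\varepsilon_0\to0$ case of Theorem~\ref{uc-fault}; membership in $H^{\kappa}$, $H^{\kappa-1}$ is then automatic from the a priori $H^2$-regularity of $\veu$.
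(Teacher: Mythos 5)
Your proposal is correct, and it follows the paper's overall route (Theorem~\ref{uc-fault}, trace theorem, \eqref{eq:fric bc}) for $\veu_{\pm}$ and $\vetau_2$, but for the traction $\vetau_1(\veu)_{\pm}$ it uses a genuinely different mechanism from the paper. The paper does \emph{not} decompose $\nabla\veu$ geometrically; instead it works in H\"ormander's $\bar H_{(k,s)}$ spaces and invokes partial hypoellipticity of the wave operator in boundary-normal coordinates (\cite[Theorem B.2.9]{HIII}): since $\veu\in H^\theta$ and the wave equation gives $P_i u_i\in H^{\theta-1}$, one trades tangential smoothness for normal smoothness to get $u_i\in \bar H_{(\theta+1,-1)}$, and then \cite[Theorem B.2.7]{HIII} gives the normal-derivative trace in $H^{\theta-3/2}$. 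Your alternative is an algebraic reconstruction: decompose $\partial_\ven\veu$ along the adapted frame using the identity that $\partial_\ven(\veu\cdot\ven)$ is recovered from ${\rm div}\,\veu$ plus surface-tangential derivatives, and the tangential components $\partial_\ven(\veu\cdot\vetau_a)$ from the tangential components of ${\rm curl}\,\veu$ plus tangential derivatives of $\veu\cdot\ven$. Crucially this exploits the fact that Theorem~\ref{uc-fault} already controls ${\rm div}\,\veu$ and ${\rm curl}\,\veu$ (because ${\bf F}=(\veu,{\rm div}\,\veu,{\rm curl}\,\veu)$ solves the hyperbolic system, Lemma~\ref{elastic-to-system}), so their traces on $\Surf_\frc$ lie in $H^\kappa$; every other term is either a tangential derivative of $\veu_\pm$ (hence $H^{\kappa-1}$) or zeroth order. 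This is more elementary --- no $H_{(k,s)}$ machinery --- but is specific to the div/curl structure in three dimensions, whereas the paper's route would work for any second-order hyperbolic system for which Lemma~\ref{elastic-to-system} is not available. One small inaccuracy in your write-up: the identity $\vetau_1(\veu)=\lambda({\rm div}\,\veu)\ven+\mu(\nabla(\ven\cdot\veu)+\partial_\ven\veu)+R(\veu)$ with $R(\veu)$ purely zeroth order is not quite right, since the $p^0$ piece of $\tsLaINI$ contributes first-order terms $-p^0({\rm div}\,\veu)\ven+p^0\partial_\ven\veu$; the correct identity is $\vetau_1(\veu)=(\lambda-p^0)({\rm div}\,\veu)\ven+\mu\nabla(\ven\cdot\veu)+(\mu+p^0)\partial_\ven\veu+R(\veu)$. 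This does not affect the argument, because the extra $p^0$ terms are of the same type as the $\lambda,\mu$ terms and are handled by the identical div/curl reconstruction.
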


\begin{proof}
With the unique continuation, we determine the displacement $\veu$ on both sides of the rupture surface $\veu_{\pm}$ defined by \eqref{def-traceu},
and, hence, $[\veu_\TT]_-^+$, the tangential jump of particle
displacement across the rupture surface, and
\[
   (\vetau_1(\veu))_{\pm} := \lim_{h\to 0^{\pm}} \vetau_1(\veu) (z+h\ven,t), \quad z\in \Surf_\frc.
\]
By implication, as $\tsT^0$ is known, we determine
\[
   (\vetau_2(\veu))_{\pm}:=\lim_{h\to 0^{\pm}} \vetau_2(\veu) (z+h\ven,t), \quad z\in \Surf_\frc.
\]
Thus we obtain $(\vetau_1(\veu))_{\TT,\pm}$ and
$(\vetau_2(\veu))_{\TT,\pm}$ and, hence, $\vetau_\frc$.

\smallskip
The regularity of $\veu_{\pm}$ was already discussed following Theorem \ref{uc-fault}, and the regularity estimate was given in \eqref{estimate-trace}.

For the regularity of $\vetau_\frc$, we recall the $H_{(k,s)}$-norm in $\R^{n+1}$ (see Definition B.1.10 in \cite{HIII}) defined as
\begin{equation} 
\|u\|^2_{(k,s)} = \int_{\R^{n+1}} |\widehat u(\xi)|^2 (1+|\xi|^2)^k (1+|\xi'|^2)^s d\xi \, ,
\end{equation}
with respect to the coordinates $y=(x',x^n)\in \R^{n}\times \R$. 
Note that when $s=0$, the $H_{(k,0)}$-norm above is equivalent to the usual $H^k$-norm.
In our case, the local coordinates can be chosen as the boundary normal coordinate of $M_j$ such that
\begin{align*}
\p M_j = \{x^n = 0\}, \quad P_i = \p_{x^n}^2 + a_i(y, D'),
\end{align*}
where $P_i$ is the wave operator \eqref{def-wave}. Moreover, define $\bar H_{(k,s)} = \{u|_{x^n > 0} : u \in H_{(k,s)}\}$.
Recall also Theorem B.2.9 in \cite{HIII} that allows us to trade smoothness from the tangential variables to the normal variable: if $u \in \bar H_{(k_1,s_1)}$ and $P_i u \in \bar H_{(k_2-2,s_2)}$, then 
$u \in \bar H_{(k,s)}$ if $k \le k_2$ and $k + s \le k_j + s_j$, $j=1,2$.

Now consider the homogeneous system \eqref{eq_system} with $f=0$ and $u \in H^\theta$ for some $\theta \in \R$. 
Suppose that the coefficients of $L_i$ are smooth.
%If the coefficients of $L_i$ are just $C^0$ as previously assumed, then we can only do for $\theta\leq 1$ so that those coefficients do not affect the regularity of $L_i(Du,u)$, since $Du$ is at most $L^2$ when $\theta\leq 1$.
We aim to show that $\p_{x^n} u|_{x^n = 0}$ is well-defined in a rough Sobolev space. 
Locally $u \in \bar H_{(\theta, 0)} = H^\theta$ and $P_i u_i = -L_i(Du,u)  \in \bar H_{(\theta-1,0)}$. 
This is due to $D u\in H^{\theta-1}$ and
\begin{eqnarray} \label{norm-Du-u}
\|D u\|^2_{(\theta-1,s)} &=& \int_{\R^{n+1}} |\xi \widehat u(\xi)|^2 (1+|\xi|^2)^{\theta-1} (1+|\xi'|^2)^s d\xi \nonumber \\
&\leq& \frac12 \int_{\R^{n+1}} |\widehat u(\xi)|^2 (1+|\xi|^2)^{\theta} (1+|\xi'|^2)^s d\xi\, =\frac12 \|u\|^2_{(\theta,s)}.
\end{eqnarray}
Thus $u_i \in \bar H_{(k,s)}$ if $k \le \theta+1$ and $k + s \le \theta$.
In particular, $u_i \in \bar H_{(\theta+1,-1)}$. For an estimate on the norm, define $X=\{v\in \bar H_{(\theta,0)}: \|v\|_X <\infty\}$ where $\|v\|_X := \|v\|_{(\theta,0)} + \|P_i v\|_{(\theta-1,0)}$. It follows from the closed graph theorem that $X$ is a Banach space. Then apply Lemma \ref{lemma-A2} with $Y=\bar H_{(\theta+1,-1)}$ and $Z=\bar H_{(\theta,0)}$, and using \eqref{norm-Du-u}, we have
%The norms of $Y,Z$ satisfy the condition directly by checking definition (4.10).
\begin{eqnarray} \label{estimate-theta1-1}
\|u_i\|_{(\theta+1,-1)} &\leq& C \Big( \|u_i\|_{(\theta,0)}  + \|P_i u_i\|_{(\theta-1,0)} \Big) \nonumber \\
&\leq& C \Big( \|u_i\|_{(\theta,0)}  +  \| D u_i\|_{(\theta-1,0)} +\| u_i\|_{(\theta-1,0)} \Big) \leq C  \|u_i\|_{H^{\theta}}.
\end{eqnarray}
In fact the constant in \eqref{estimate-theta1-1} depends only on the coefficients of $P_i, L_i$, which can be extracted from the proof of Theorem B.2.9 in \cite{HIII}.
Repeating the argument above for all $i=1,...,m$ (which requires changing coordinates and using the coordinate invariant versions of the spaces),
we have $u \in \bar H_{(\theta+1,-1)}$. Then it follows from \cite[Theorem B.2.7]{HIII} that $\p_{x^n} u|_{x^n = 0}$ is well-defined in $H^{\theta-\frac32}$ as rough distributions if $\theta>\frac12$, and combining with \eqref{estimate-theta1-1}, 
\begin{equation}\label{estimate-norm-normal}
\big\|\p_{x^n} u|_{x^n = 0} \big\|_{H^{\theta-\frac32}} \leq C \|u\|_{(\theta+1,-1)} \leq C\|u\|_{H^{\theta}}.
\end{equation}
%Furthermore, by iterating the previous argument we get $u \in \bar H_{(\theta+l,-l)}$ for any $l \in \N$, which implies that all $\p_{x^n}^l u|_{x^n = 0}$, $l \in \N$, are well-defined as rough distributions.

Since $\veu\in H^{\theta}(M_j\times [-T/2,T/2])$ for any $\theta\in (\frac12,1)$ by Theorem \ref{uc-fault}, the argument above and \eqref{eq:fric bc var} show that $(\vetau_1(\veu))_{\pm} \in H^{\theta-3/2}(\Surf_\frc \times [-T/2,T/2])$. 
The estimate on the norm of $(\vetau_1(\veu))_{\pm}$ is given by \eqref{estimate-norm-normal} and Theorem \ref{uc-fault}:
\begin{equation} \label{estimate-norm-tau1}
\big\|(\vetau_1(\veu))_{\pm} \big\|_{H^{\theta-\frac32}(\Surf_\frc \times [-\frac{T}{2},\frac{T}{2}])} \leq C \big(\log |\log \varepsilon_0| \big)^{-c}.
\end{equation}
For the regularity of $(\vetau_2(\veu))_{\pm}$, since $\veu_{\pm}\in H^{\kappa}(\Surf_\frc \times [-T/2,T/2])$ for $\kappa\in (0,\frac12)$, then $(\veu(\ven\cdot\tsT^0))_{\pm}\in H^{\kappa}$ and hence $(\vetau_2(\veu))_{\pm}\in H^{\kappa-1}(\Surf_\frc \times [-T/2,T/2])$ by \eqref{eq:fric bc var}.
The estimate on the norm of $(\vetau_2(\veu))_{\pm}$ is given by \eqref{estimate-trace} and \eqref{norm-Du-u}:
\begin{equation} \label{estimate-norm-tau2}
\big\|(\vetau_2(\veu))_{\pm} \big\|_{H^{\kappa-1}(\Surf_\frc \times [-\frac{T}{2},\frac{T}{2}])} \leq C \big(\log |\log \varepsilon_0| \big)^{-c}.
\end{equation}
Thus $\vetau_\frc \in H^{\kappa-1}(\Surf_\frc \times [-T/2,T/2])$ by \eqref{eq:fric bc}, and the estimate on its norm directly follows from \eqref{estimate-norm-tau1} and \eqref{estimate-norm-tau2}.
\end{proof}

Theorem \ref{determine-fault} yields the following corollary by interpolation.

\begin{corollary} \label{coro-higher-order}
Under the assumptions of Theorem \ref{determine-fault}, assume furthermore that
$$\|\veu\|_{H^s((M_1\sqcup M_2)\times [-T,T])}\leq \Lambda,\quad \textrm{for some }s\geq 2.$$
Then there exist constants $\widehat{\varepsilon_0},C,c$ such that for any $0<\varepsilon_0<\widehat{\varepsilon_0}$, we have
\begin{equation*}
\|\veu_{\pm}\|_{H^{r}(\Surf_\frc \times [-\frac{T}{2},\frac{T}{2}])} + \big\|\vetau_\frc-(\ven\cdot\tsT^0)_\TT \big\|_{H^{r-1}(\Surf_\frc \times [-\frac{T}{2},\frac{T}{2}])} \leq C \big(\log |\log \varepsilon_0| \big)^{-c},
\end{equation*}
for any $r\in (1,s-\frac12)$, where $C$ is independent of $\varepsilon_0$, and $c$ depends only on $r,s$.
%$C$ also depends on $\Lambda,r,s$ and the norms of $T^0, \Lambda^{T^0}$.
\end{corollary}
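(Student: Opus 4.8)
The plan is to obtain the higher-order bound by interpolating the log--log estimate of Theorem \ref{uc-fault} against the extra a priori bound $\|\veu\|_{H^s}\leq\Lambda$. Fix $r\in(1,s-\tfrac12)$ and set $\sigma:=r+\tfrac12$, so that $\tfrac32<\sigma<s$. The main point will be the estimate
\[
\|\veu\|_{H^{\sigma}((M_j\setminus U)\times[-\tfrac T2,\tfrac T2])}\leq C\big(\log|\log\varepsilon_0|\big)^{-c},\qquad j=1,2,
\]
with $C$ independent of $\varepsilon_0$ and $c=c(r,s)>0$; once this is in place, the claimed bounds for $\veu_\pm$, $(\vetau_1(\veu))_\pm$, $(\vetau_2(\veu))_\pm$ and $\vetau_\frc$ follow by repeating the trace arguments in the proof of Theorem \ref{determine-fault} with the exponent $\theta$ there replaced by $\sigma$.

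For the interpolation, fix any $\theta\in(0,1)$. Since $f=0$, Theorem \ref{uc-fault} (equivalently Corollary \ref{uc-whole-loglog} applied to the system satisfied by $\mathbf{F}=(\veu,{\rm div}\,\veu,{\rm curl}\,\veu)$) gives
\[
\|\veu\|_{H^{1-\theta}((M_j\setminus U)\times[-\tfrac T2,\tfrac T2])}\leq C\big(\log|\log\varepsilon_0|\big)^{-\theta c_0},
\]
where $c_0$ is the absolute exponent from that corollary. Restricting the hypothesis $\|\veu\|_{H^s((M_1\sqcup M_2)\times[-T,T])}\leq\Lambda$ to $(M_j\setminus U)\times[-\tfrac T2,\tfrac T2]$ gives there a uniform $H^s$-bound. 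Since $1-\theta<\sigma<s$, we may write $\sigma=(1-\lambda)(1-\theta)+\lambda s$ with $\lambda=\tfrac{\sigma-(1-\theta)}{s-(1-\theta)}\in(0,1)$, and the interpolation inequality for Sobolev norms on the (Lipschitz) product domain yields
\[
\|\veu\|_{H^{\sigma}}\leq C\|\veu\|_{H^{1-\theta}}^{\,1-\lambda}\|\veu\|_{H^{s}}^{\,\lambda}\leq C\Lambda^{\lambda}\big(\log|\log\varepsilon_0|\big)^{-\theta c_0(1-\lambda)},
\]
which is the desired estimate with $c=\theta c_0(1-\lambda)$; taking e.g. $\theta=\tfrac12$ fixed, $c$ depends only on $r$ and $s$.

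It remains to transfer this to the rupture surface. As $\sigma>\tfrac12$, the trace theorem gives $\veu_\pm\in H^{\sigma-1/2}=H^{r}(\Surf_\frc\times[-\tfrac T2,\tfrac T2])$ with the same log--log bound, and hence $(\vetau_2(\veu))_\pm\in H^{r-1}$ via \eqref{eq:fric bc var}, since $\nablaS\cdot$ is a tangential operator and $\ven\cdot\tsT^0$ is smooth. For $(\vetau_1(\veu))_\pm=\big(\ven\cdot(\tsLaINI:\nabla\veu)\big)_\pm$ one repeats the $\bar H_{(k,s)}$-argument from the proof of Theorem \ref{determine-fault}: working in boundary normal coordinates of $M_j$ and using Theorems B.2.7 and B.2.9 of \cite{HIII}, from $\veu\in H^{\sigma}(M_j\times[-\tfrac T2,\tfrac T2])$ with $\sigma>\tfrac12$ one gets that $\partial_{x^n}\veu|_{\Surf_\frc}$ is well defined in $H^{\sigma-3/2}=H^{r-1}$ with $\|\partial_{x^n}\veu|_{\Surf_\frc}\|_{H^{r-1}}\leq C\|\veu\|_{H^{\sigma}}$, so $(\vetau_1(\veu))_\pm\in H^{r-1}$ with the log--log bound. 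Combining through $\vetau_\frc-(\ven\cdot\tsT^0)_\TT=(\vetau_1(\veu)+\vetau_2(\veu))_\TT$ in \eqref{eq:fric bc} gives the stated estimate for $\vetau_\frc$. The only place requiring care is keeping the regularity windows consistent, namely ensuring $\sigma=r+\tfrac12$ lies strictly between $\tfrac32$ and $s$ so that both the normal-derivative trace estimate and the interpolation inequality apply; the interpolation and trace steps themselves are otherwise routine, so there is no serious obstacle beyond this bookkeeping.
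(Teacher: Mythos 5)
Your proof is correct, but it takes a genuinely different route from the paper's. The paper first takes traces at the two extreme regularities---$\veu_\pm\in H^{s-1/2}$ with a $C\Lambda$ bound from the a priori hypothesis, and $\veu_\pm\in H^{\kappa}$, $\vetau_\frc\in H^{\kappa-1}$ with log--log decay from Theorem \ref{determine-fault}---and then interpolates \emph{on} $\Surf_\frc\times[-\tfrac T2,\tfrac T2]$ (and for $\vetau_\frc$ between $H^{s-3/2}$ and $H^{\kappa-1}$) to land at $H^r$ and $H^{r-1}$. You instead interpolate \emph{in the interior} between the $H^{1-\theta}$ log--log bound and the $H^s$ a priori bound to get $\veu\in H^{\sigma}$ with $\sigma=r+\tfrac12\in(\tfrac32,s)$ and log--log decay, and then re-run the trace and normal-derivative arguments at the intermediate level $\sigma$. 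Both orderings are valid and deliver the same range $r\in(1,s-\tfrac12)$. What the paper's order buys is that the trace machinery is only invoked at $\theta\in(\tfrac12,1)$ (already done in the proof of Theorem \ref{determine-fault}) and at the classical level $s\geq 2$, with interpolation confined to Sobolev scales on the fixed surface; what your order buys is a single intermediate interior estimate $\|\veu\|_{H^{\sigma}((M_j\setminus U)\times[-T/2,T/2])}\leq C(\log|\log\varepsilon_0|)^{-c}$ from which everything downstream follows uniformly. One small simplification you could make: once $\sigma>\tfrac32$, the H\"ormander $\bar H_{(k,s)}$ trade-regularity argument is unnecessary---since $\nabla\veu\in H^{\sigma-1}$ with $\sigma-1>\tfrac12$, the ordinary trace theorem already gives $(\nabla\veu)|_{\Surf_\frc}\in H^{\sigma-3/2}=H^{r-1}$ with the same bound---so invoking Theorems B.2.7/B.2.9 of \cite{HIII} at the level $\sigma$ is overkill, though not wrong.
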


\begin{proof}
Since we assume $\veu|_{M_j\times [-T,T]}\in H^s (M_j\times [-T,T])$ for $j=1,2$, the trace onto $\Surf_\frc$ from both sides $\veu_{\pm}\in H^{s-\frac12}(\Surf_\frc \times [-T,T])$, and
\begin{equation} \label{bound-higher-u}
\|\veu_{\pm}\|_{H^{s-\frac12}(\Surf_\frc \times [-T,T])} \leq C\|\veu\|_{H^{s}(M_j\times [-T,T])} \leq C \Lambda.
\end{equation}
Moreover, it follows from \eqref{eq:fric bc var} that $\vetau_1(\veu),\vetau_2(\veu) \in H^{s-1}$ in the boundary normal neighborhood of $M_j$. Hence their traces $(\vetau_1(\veu))_{\pm}, (\vetau_2(\veu))_{\pm}\in H^{s-\frac32}(\Surf_\frc \times [-T,T])$, and the norms are also bounded by $C\Lambda$. Therefore $\vetau_\frc\in H^{s-\frac32}(\Surf_\frc \times [-T,T])$ by \eqref{eq:fric bc}, and
\begin{equation} \label{bound-higher-tau}
\big\|\vetau_\frc-(\ven\cdot\tsT^0)_\TT \big\|_{H^{s-\frac32}(\Surf_\frc \times [-T,T])} \leq C \Lambda.
\end{equation}
Then the corollary follows by interpolating (e.g. \cite[Theorem 6.4.5]{BL}) between \eqref{bound-higher-u}, \eqref{bound-higher-tau} and Theorem \ref{determine-fault}.
\end{proof}

\begin{remark}\label{rem_balg}
In particular, when $s>2$, we obtain estimates from Corollary \ref{coro-higher-order} for $\veu_{\pm}$ on Sobolev spaces that are Banach algebras, see \cite[Theorem 4.39]{AF}.
%Because $\Surf_\frc$ is assumed to be smooth with Lipschitz boundary, then $\Surf_\frc \times [-T,T])$ satisfies the cone condition.
%In particular, when $s>3$, we obtain estimates from Corollary \ref{coro-higher-order} for $\vetau_\frc$ on Sobolev spaces that are Banach algebras, see \cite[Theorem 4.39]{AF}.
\end{remark}

\medskip
\appendix

\section{}
%\section{Reduction of the elasticity system}
%\label{appendix-A}

Consider the classical elasticity system in a bounded domain $\Omega \subset \R\times \R^3$,
\begin{equation}\label{eq-elastic}
\rho \partial_t^2 u -\mu(\Delta u+\nabla {\rm div} \, u)-\nabla(\lambda\, {\rm div}\, u)-\sum_{j=1}^3 \nabla\mu \cdot (\nabla u_j +\partial_j u)e_j=0,
\end{equation}
for the displacement vector $u=(u_1,u_2,u_3)$ depending on $(t,x)\in \Omega$. Assume the density $\rho\in C^1(\overline{\Omega})$ and the Lam\'e parameters $\mu,\lambda\in C^2(\overline{\Omega})$.
%$u$ depends on space and time.

\begin{lemma}[Lemma 5.1 in \cite{EINT}] \label{elastic-to-system}
Let $v={\rm div}\, u$, $w={\rm curl}\, u$. Assume $\rho\in C^1(\overline{\Omega})$ and $\mu,\lambda\in C^2(\overline{\Omega})$. If $u$ solves \eqref{eq-elastic} then
$$\frac{\rho}{\mu} \partial_t^2 u -\Delta u+A_{1}(u,v)=0,$$
$$\frac{\rho}{2\mu+\lambda} \partial_t^2 v -\Delta v+A_{2}(u,v,w)=0,$$
$$\frac{\rho}{\mu} \partial_t^2 w -\Delta w+A_{3}(u,v,w)=0,$$
where $A_1,A_2,A_3$ are linear differential operators of first order with coefficients in $C^0(\overline{\Omega})$. Moreover, when $\rho,\mu,\lambda$ do not depend on $t$, then the coefficients of $A_j$ do not depend on $t$.
\end{lemma}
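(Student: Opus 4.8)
The plan is to produce the three equations one at a time: divide \eqref{eq-elastic} by $\mu$ to get the equation for $u$, apply ${\rm curl}$ to \eqref{eq-elastic} to get the equation for $w={\rm curl}\,u$, and apply ${\rm div}$ to \eqref{eq-elastic} to get the equation for $v={\rm div}\,u$. In each case the task is to show that, once the principal part $\tfrac{\rho}{\ast}\partial_t^2(\cdot)-\Delta(\cdot)$ has been isolated, every remaining term is a first-order differential operator in $(u,v,w)$ with $C^0$ coefficients. Two structural facts drive this. First, the vector Laplacian identity $\Delta u=\nabla({\rm div}\,u)-{\rm curl}({\rm curl}\,u)=\nabla v-{\rm curl}\,w$ turns any stray $\Delta u$ into first-order operators applied to $v$ and $w$. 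Second, once the $u$-equation is in hand, it lets us replace any stray $\partial_t^2 u$ by $\tfrac{\mu}{\rho}(\Delta u-A_1(u,v))$, hence — by the first fact — by a first-order operator in $(u,v,w)$; this is why the $u$-equation must be derived first. I also use tacitly that $\mu>0$ and $2\mu+\lambda>0$ are bounded below on $\overline\Omega$, so that dividing by $\mu$ or $2\mu+\lambda$ is harmless and, since $\mu,\lambda\in C^2$ and $\rho\in C^1$, keeps all coefficients continuous.

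For the $u$-equation I write $\mu(\Delta u+\nabla{\rm div}\,u)=\mu\Delta u+\mu\nabla v$ and $\nabla(\lambda v)=\lambda\nabla v+v\nabla\lambda$ in \eqref{eq-elastic}, divide by $\mu$, and collect $A_1(u,v)$ out of $\nabla v+\tfrac{\lambda}{\mu}\nabla v+\tfrac{1}{\mu}v\nabla\lambda+\tfrac{1}{\mu}\sum_j\nabla\mu\cdot(\nabla u_j+\partial_j u)e_j$; each summand is first order in $u$ or $v$, with coefficients built from $\lambda,\nabla\lambda,1/\mu,\nabla\mu/\mu$, all in $C^0$. For the $w$-equation I apply ${\rm curl}$ to \eqref{eq-elastic}, using ${\rm curl}(\phi V)=\phi\,{\rm curl}\,V+\nabla\phi\times V$ and ${\rm curl}\,\nabla=0$: the $\nabla(\lambda v)$ term drops out entirely, ${\rm curl}(\mu\nabla v)=\nabla\mu\times\nabla v$, ${\rm curl}(\mu\Delta u)=\mu\Delta w+\nabla\mu\times\Delta u$, and ${\rm curl}(\rho\partial_t^2 u)=\rho\partial_t^2 w+\nabla\rho\times\partial_t^2 u$. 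After isolating $\rho\partial_t^2 w-\mu\Delta w$, the remaining terms are $\nabla\mu\times\Delta u$, ${\rm curl}$ of the $\nabla\mu$-term, and $\nabla\rho\times\partial_t^2 u$; I substitute $\Delta u=\nabla v-{\rm curl}\,w$ in the first, expand the second by Leibniz (the surviving second derivatives of $u$ organize into $\partial_k w$, while the pieces in which the Levi-Civita symbol meets a symmetric second derivative vanish), and use the $u$-equation for $\partial_t^2 u$ in the third; each then becomes first order in $(u,v,w)$, and dividing by $\mu$ gives the $w$-equation with $A_3$.

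For the $v$-equation I apply ${\rm div}$ to \eqref{eq-elastic}: ${\rm div}(\mu(\Delta u+\nabla v))=2\mu\Delta v+\nabla\mu\cdot(\Delta u+\nabla v)$, ${\rm div}\,\nabla(\lambda v)=\Delta(\lambda v)=\lambda\Delta v+2\nabla\lambda\cdot\nabla v+v\Delta\lambda$, and ${\rm div}(\rho\partial_t^2 u)=\rho\partial_t^2 v+\nabla\rho\cdot\partial_t^2 u$. The two $\Delta v$-contributions combine into the principal part $\rho\partial_t^2 v-(2\mu+\lambda)\Delta v$, and the leftovers — $\nabla\mu\cdot\Delta u$, ${\rm div}$ of the $\nabla\mu$-term, $\nabla\rho\cdot\partial_t^2 u$, and the lower-order $\lambda$- and $\mu$-derivative terms such as $v\Delta\lambda$ (which only needs $\lambda\in C^2$) — are handled by $\Delta u=\nabla v-{\rm curl}\,w$, by the identities $\sum_j\partial_j\partial_k u_j=\partial_k v$ and $\sum_j\partial_j^2 u_k=(\Delta u)_k$, and by the $u$-equation, respectively. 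Dividing by $(2\mu+\lambda)$ produces the $v$-equation with $A_2$. Finally, every coefficient generated above is a polynomial/rational expression in $\rho,\mu,\lambda$ and their spatial derivatives, so if $\rho,\mu,\lambda$ are time-independent then so are the coefficients of $A_1,A_2,A_3$.

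The main obstacle is the bookkeeping in the ${\rm curl}$ and ${\rm div}$ steps: one must check that \emph{every} second-order-in-$u$ term created by differentiating \eqref{eq-elastic} (including those coming from the $\nabla\mu$-term), the second-order-in-$v$ term $\lambda\Delta v$, and the second-time-derivative terms $\nabla\rho\cdot\partial_t^2 u$ and $\nabla\rho\times\partial_t^2 u$ genuinely collapse to first order after the two substitutions, while simultaneously no coefficient demands more than one derivative of $\rho$ or more than two of $\mu$ and $\lambda$. The combinatorial cancellations — the Levi-Civita antisymmetry annihilating ${\rm curl}$-type terms contracted against symmetric second derivatives, and the reorganization $\sum_j\partial_j\partial_k u_j=\partial_k v$ — are the crux; with those in place the remainder is a routine computation.
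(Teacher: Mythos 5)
The paper does not prove this lemma; it cites it verbatim as Lemma 5.1 of \cite{EINT}, so there is no in-paper argument to compare against. Your derivation is nevertheless correct and is the standard computation underlying that reference: dividing \eqref{eq-elastic} by $\mu$ to get the $u$-equation first, then applying ${\rm div}$ and ${\rm curl}$ and using the Helmholtz identity $\Delta u=\nabla v-{\rm curl}\,w$ together with the already-derived $u$-equation to eliminate the residual $\Delta u$ and $\partial_t^2 u$ terms is exactly how the second-order leftovers collapse to first order, the cancellations you flag ($\epsilon_{ilj}\partial_l\partial_j u_k=0$ by antisymmetry, $\sum_j\partial_j\partial_k u_j=\partial_k v$, and $\sum_{j,l}\epsilon_{ilj}\partial_k\partial_l u_j=\partial_k w_i$) are precisely the ones needed, and your regularity accounting ($\rho\in C^1$ for $\nabla\rho\cdot\partial_t^2 u$ and $\nabla\rho\times\partial_t^2 u$, $\mu,\lambda\in C^2$ for $\Delta\lambda$ and the Hessian of $\mu$) matches the stated hypotheses.
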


Note that this linear system of equations above satisfied by the vector $(u, {\rm div}\, u, {\rm curl}\, u)$ consists of seven scalar equations. A similar result holds for the Maxwell system, see Lemma 4.1 in \cite{EINT}.

\smallskip
\begin{lemma} \label{lemma-A2}
Let $X \subset Y \subset Z$ be three Banach spaces. Suppose that there is a constant $C_0 > 0$ such that for all $x \in X$ and $y \in Y$,
$$
\|x\|_Z \le C_0 \|x\|_X,
\quad
\|y\|_Z \le C_0 \|y\|_Y.
$$
Then there is a constant $C > 0$ such that for all $x \in X$,
$$
\|x\|_Y \le C \|x\|_X.
$$
\end{lemma}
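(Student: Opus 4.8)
The plan is to read this as a direct application of the closed graph theorem. The inclusion $\iota\colon X \to Y$, $\iota(x)=x$, is a linear map between Banach spaces, so it is bounded provided its graph is closed in $X\times Y$; the constant $C$ can then be taken to be the operator norm $\|\iota\|_{B(X,Y)}$, which is finite.

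To verify that the graph is closed, I would take a sequence $x_n\in X$ with $x_n\to x$ in $X$ and $x_n=\iota(x_n)\to y$ in $Y$, and show $x=y$ as elements of $Y$. This makes sense because $X\subset Y\subset Z$ are genuine inclusions of subspaces: an element of $X$ is literally the same element of $Y$ and of $Z$. The two hypothesized inequalities then do exactly this job. From $\|x_n-x\|_Z\le C_0\|x_n-x\|_X\to 0$ we get $x_n\to x$ in $Z$, and from $\|x_n-y\|_Z\le C_0\|x_n-y\|_Y\to 0$ we get $x_n\to y$ in $Z$. Since limits in a normed space are unique, $x=y$ in $Z$, hence $x=y$ and $\iota(x)=y$. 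Thus the graph of $\iota$ is closed, and the closed graph theorem gives $\|x\|_Y=\|\iota(x)\|_Y\le C\|x\|_X$ for all $x\in X$.

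I do not expect any genuine obstacle here. The only point requiring a little care is the interpretation of the chain of inclusions, namely that ``$x_n\to x$ in $X$'' and ``$x_n\to x$ in $Z$'' refer to the same limiting element, so that uniqueness of limits in $Z$ can legitimately be invoked to conclude $x=y$. Completeness of $X$ enters (implicitly) through the closed graph theorem; a direct argument by contradiction would have to invoke it as well, so the closed-graph formulation is the cleanest route, and no further input beyond the two stated bounds is needed.
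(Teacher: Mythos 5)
Your proof is correct and uses essentially the same argument as the paper: apply the closed graph theorem to the inclusion $X \hookrightarrow Y$, and verify closedness of the graph via the continuous embeddings of both $X$ and $Y$ into $Z$ together with uniqueness of limits in $Z$.
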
 
\begin{proof}
Due to the closed graph theorem, applied to the inclusion map $I : X \to Y$, it is enough to show that if $x_n \to x$ in $X$ and $x_n \to y$ in $Y$ then $x = y$. The continuous inclusions $X \subset Z$
and $Y \subset Z$ imply that $x_n \to x$ and $x_n \to y$ in $Z$. Thus $x=y$.
\end{proof}

\bigskip

\end{document}